\documentclass[10pt]{article}
\setlength{\oddsidemargin}{0.3cm}
\setlength{\evensidemargin}{0.3cm}
\setlength{\textwidth}{16cm}
\setlength{\textheight}{20cm}

\usepackage{amsmath, amssymb, cases, color,amsthm, mathrsfs}
\usepackage{hyperref}
\usepackage{todonotes}

\usepackage{bbm}
\usepackage{stmaryrd}
\usepackage{amsmath, amssymb,amscd, mathtools,cases, color, amsthm }
\usepackage{amsfonts}
\usepackage{graphicx}
\usepackage[top=1in, bottom=1.25in, left=1.10in, right=1.10in]{geometry}
\usepackage{subfig}
\usepackage{graphicx}
\usepackage{pgfplots}
\usetikzlibrary{patterns}
\usepgfplotslibrary{fillbetween}

\usepackage{fancyhdr}
\usepackage{bbm}
\usepackage{lipsum}
\usepackage{amsfonts}
\usepackage{graphicx}
\usepackage{epstopdf}
\usepackage{algorithmic}

\usepackage{hyperref}
\usepackage{upref}
\hypersetup{linkcolor=blue, colorlinks=true,citecolor=red}
%
\usepackage{authblk}
\usepackage{amsmath, amssymb, cases, color,amsthm}
\usepackage{hyperref}

\newtheorem{thm}{Theorem}[section]
\newtheorem{prop}[thm]{Proposition}
\newtheorem{lem}[thm]{Lemma}

\newtheorem{rmk}[thm]{Remark}

\newcommand{\abs}[1]{{|{#1}|}}

\theoremstyle{definition}

\theoremstyle{remark}
\numberwithin{equation}{section}

\newcommand{\by}{\mathbf{y}}
\newcommand{\bz}{\mathbf{z}}


\newcommand{\BE}{\begin{equation}}
\newcommand{\EEQ}{\end{equation}}
\newcommand{\rfb}[1]{\mbox{\rm
		(\ref{#1})}\ifx\undefined\stillediting\else:\fbox{$#1$}\fi}

\newfont{\roma}{cmr10 scaled 1200}

\newcommand{\nline}  {{\mathbb N}}

\newcommand{\rline}  {{\mathbb R}}

\newcommand{\dd}  {{\rm d}\hbox{\hskip 0.5pt}}
\renewcommand{\leq} {\leqslant}
\renewcommand{\geq} {\geqslant}
\newcommand{\mm}    {{\hbox{\hskip 0.5pt}}}
\newcommand{\m}     {{\hbox{\hskip 1pt}}}

\newcommand{\bluff} {{\hbox{\raise 15pt \hbox{\mm}}}}
\newcommand{\sbluff}{{\hbox{\raise 10pt \hbox{\mm}}}}
\newcommand{\Om}    {{\Omega}}
\renewcommand{\div} {{\rm div\,}}
\newcommand{\eps}    {{\varepsilon}}

\newcommand{\dx}     {\mathrm{d}x} 
\newcommand{\ds}     {\mathrm{d}s}


%


\newcommand{\prt}      {{\partial}}

\DeclarePairedDelimiter{\norm}{\lVert}{\rVert}

\newcommand{\Ascr} {\mathcal{A}}
\newcommand{\Bscr} {\mathcal{B}}
\newcommand{\Cscr} {\mathcal{C}}
\newcommand{\Dscr} {\mathcal{D}}
\newcommand{\Escr}{\mathcal{E}}

\newcommand{\Pscr} {\mathcal{P}}

\newcommand{\bbm}[1]{\left[\begin{matrix} #1 \end{matrix}\right]}

%



\title{Existence of strong solutions for a perfect elastic beam interacting with Navier-Stokes equations}

\author{Sebastian Schwarzacher$^{1,2}$}

\author{Pei Su$^{1}$}

\affil{\footnotesize$^1${\em Department of Mathematical Analysis, Faculty of Mathematics and Physics}\\
		{\em  Charles University }\\
	{\em Sokolovská 83, 186 75 Praha 8, Czech Republic}\\
{\em Email address:} peisu@karlin.mff.cuni.cz}

\affil{\footnotesize$^2${\em Department of Mathematics,
	Analysis and Partial Differential Equations}\\
	{\em Uppsala University }\\
	{\em L\"agerhyddsv\"agen 1,
	752 37 Uppsala, Sweden }
\\ {\em Email address:} schwarz@karlin.mff.cuni.cz}

\date{\today}
	
\begin{document}	
\maketitle

\begin{scriptsize}
	\abstract{	
A perfectly elastic beam is situated on top of a two dimensional fluid canister. The beam is deforming in accordance to an interaction with a Navier-Stokes fluid. Hence a hyperbolic equation is coupled to the Navier-Stokes equation. The coupling is partially of geometric nature, as the geometry of the fluid domain is changing in accordance to the motion of the beam. Here the existence of a unique strong solution for large initial data and all times up to geometric degeneracy is shown. For that an a-priori estimate on the time-derivative of the coupled solution is introduced. For the Navier-Stokes part it is a borderline estimate in the spirit of Ladyzhenskaya applied directly to the in-time differentiated system. }
\end{scriptsize}
\vspace{+3mm}

{\bf Key words.} Fluid-structure interactions, Regularity for Navier-Stokes equations, Regularity for hyperbolic PDEs, Long-time strong solutions.

{\bf AMS subject classifications.} 35B65, 35Q74, 74F10, 35R37, 76D03.


	
\section{Introduction}\label{sec_intro}

When a viscous fluid is interacting with a perfectly elastic solid, this results in a dissipative system. The dissipation of the system is however reduced to the fluid, while the solid evolution alone would be hyperbolic. It results in a coupling between a dissipative and non-dissipative partial differential equation, where the coupling is inherently non-linear. To determine whether the dissipation (or the irreversibility) of a system of PDE's suffices to produce regular solutions is an important and much studied problem in continuum mechanics. Indeed, many prominent open questions connect to that problem. 
Accordingly, in the field of fluid-structure interactions it is essential to understand to what extent the fluid dissipation is damping the solid motion~\cite{kaltenbacher,sunny}. In the physical scenario considered here, where the solid deformation is actually changing the Eulerian domain of definition of the fluid, the system becomes immanently non-linear. Hence, as the evolution of perfectly elastic solids is hyperbolic, the fluid dissipation becomes crucial to show any regularity beyond energy bounds.

In this paper we demonstrate that in some cases the non-linear coupling of a perfect elastic solid with a viscous fluid allows for strong solutions. 
Indeed, here the existence of a smooth solution for large times and large data is shown for an elastic beam interacting with the Navier-Stokes equation. It is the maybe simplest cases of such a non-linearly coupled fluid-solid interactions where the solid possesses {\em no dissipation}. As we will explain below, it seems that the proposed strategy is quite general and has the potential to be applied to other more complex situations.

We consider the interaction between a perfectly elastic beam and a two-dimensional fluid governed by the incompressible Navier-Stokes equation. See Figure \ref{fig_ALE} for the typical setting and \eqref{fluideq}--\eqref{initiald} for the coupled system of partial differential equations.
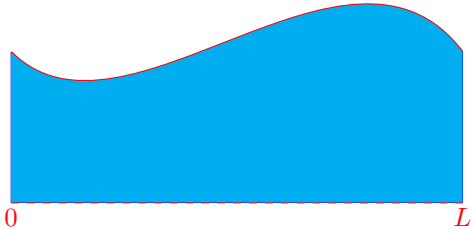
\begin{figure}[!h]\centering
	\vspace{-1cm}
	\begin{tikzpicture}[scale=1.0]
	\draw[thick,red]   (8, 2) .. controls (9.5, 0.5) and (12.5, 4) .. (14, 2);
	\draw[blue](8,2)--(8,0)--(14,0)--(14,2);
	\draw[dashed, thick, red] (8,0)--(14,0);
	\path node at (10.5,1.2) {\textcolor{blue}{$\Omega_h(t)$}};
	\path node at (8,-0.2) {\textcolor{red}{\bf $0$}};
	\path node at (14,-0.2) {\textcolor{red}{\bf $L$}};
	\draw[->,thick, red] (12.5,0)--(12.5,2.6);
	\path node at (12.85,1.3) {\textcolor{red}{\bf $h(t)$}};
	\fill[color=cyan,opacity=.1]  (8, 2) .. controls (9.5, 0.5) and (12.5, 4) .. (14, 2) -- (14,0) -- (8,0) -- (8,2);
	\end{tikzpicture}
	\caption{1D beam interacting with a 2D fluid}\label{fig_ALE}
\end{figure}    
For this setting, we show the existence of a strong solution for arbitrary large times up to the point where the fluid domain faces a topological change and with natural assumptions on the initial data, see Theorem~\ref{thm:main}. 
In particular the present work extends the available theory on strong solutions with vertical displacement of the plate in the following ways: 
\begin{itemize}
	\item It extends the theory of global strong solutions for visco-elastic solids shown in the seminal paper~\cite{grandmont2016existence} to the regime of purely elastic solids.
	\item It shows the short and long time-existence of the missing case in \cite{GraHilLaq19}, namely the case of an {\em elastic beam}. Interestingly, while the existence for short times for solids governed by the wave equation is shown there, the case of an elastic beam seems to require different methods. 
	\item It extends and improves the existence of strong solutions for short times shown by \cite{badra2019gevrey, badra2022gevrey} for the {\em elastic beam equation} to large time and to less regular conditions on the (large) initial data.	
\end{itemize}
Further the work shows that the {\em global weak solution} for an elastic beam interacting with the Navier-Stokes equation that was constructed in \cite{CasGraHil19} is a {\em strong solution} up to its first contact.  

The {\em method proposed in this work} is independent of the previous works in this setting. We wish to mention that the central regularity estimate presented here is of {\em borderline type} for the fluid equation, due to the scaling of the two dimensional Navier-Stokes equation. However, with regard to the elastic structure the estimates allow to speculate for further generalizations.

For an overview on previous efforts in fluid-structure interactions please refer to~\cite{sunny,kaltenbacher}. For the here considered coupling between a hyperbolic solid equation with a viscous fluid equation, many results showing the existence for {\em strong solutions} involving fixed geometries are available~\cite{ZhaZua,KukavicaTuffahaZiane,
	BarGruLasTuf07,BarGruLasTuff,AvalosTriggiani,
	AvalosLasieckaTriggiani}. Further results, including {\em variable geometries} for short times and/or small data can be found here~\cite{CSS1,CSS2,BdV1,BouGue10comp,IgnatovaKukavica,	ChuLasWeb13,IgnKukLasTuf17,GraHilLaq19,badra2019gevrey,badra2022gevrey,maity2021existence}. 
Previous results for {\em variable geometries} on long-time and large data for hyperbolic solids are reduced to the frame-work of weak-solutions. This includes three-dimensional fluids~\cite{grandmont2008existence,muha2013existence,lengeler2014weak}, non-linear shells~\cite{MuSc}, tangential deformations~\cite{KamSchSpe22}, global solutions~\cite{CasGraHil19} and other types of fluids~\cite{BreSch18,BreSch21} for instance. Finally we wish to mention that there are many results when the solid is considered to be a rigid body inside the Navier-Stokes fluid, see~\cite{Galdi1,Tuc1,Gunz,Des,Tak, Hieber,GerHil10,TakTucWei15,GlaSue15,MuhNecRad23} and the references therein.

If the solid is assumed to be {\em viscous} more results are available. We focus here on the question of regularity.
For a beam that is elastic and viscous interacting with the 2D Navier-Stokes equations, global smooth solution exist for arbitrary times~\cite{grandmont2016existence}. Smoothness was shown recently for a viscous elastic shell interacting with Navier-Stokes equations once the solution overcomes some regularity threshold~\cite{BreMenSchSu23}. Further we mention some results on the existence of weak solutions for the full physical setting of a deforming visco-elastic solid inside a fluid with the same dimension~\cite{BenKamSch23}.

{\em The major technical improvement in this paper is an a-priori estimate on the spacial gradient of the time-derivative}. This is achieved by analyzing the "in-time-differentiated system''. Only in this way the hyperbolic structure equation is conserved to an extend to close an estimate. Even so the coupling is extraordinarily non-linearly depending on the solution, the fluid viscosity just suffices to estimate the related terms. It is worth noticing that criticality is already true for the 2D Navier-Stokes equations without interaction. Indeed,
the existence proof of Ladyzhenskaya for a strong solution depended on a sharp border-line estimate in two dimensions. This estimate known as Ladyzhenskaya estimate is an interpolation, which we use in this paper at many instances.
What was not clear to us at the beginning of our investigations, is that an estimate on the time-derivative of the solution (combined with its energy estimate) does directly allow to show the existence of strong solutions; once the initial data is smooth enough to perform this estimate. This seems to be a different path then the one originally developed and later adapted to fluid-structure interactions~\cite{grandmont2016existence, breit2022regularity}.
The approach introduce here to fluid-structure interactions with time-changing geometry using "in-time-differentiation" seems to allow for further generalizations of the solid equation. Indeed, the here presented estimates are not of borderline-type with regard of the solid impact on the fluid. This has a clear technical reason as it circumvents the well known fact that the second-order time-derivative of a solution is not a good test-function for hyperbolic equations.

\subsection{Statement of the problem}
We consider a two-dimensional canister filled with viscous
incompressible fluid and its top surface is formed of an elastic
beam. 
As usual, 
we assume that the beam only deforms in the vertical direction on
the surface. The fluid domain, denoted by $\Om_h$, is defined 
as
$$\Om_h=\left\{(x,y)\in\rline^2\left|\m x\in(0, L), \m\m y\in(0, 
h(t,x))\right.\right\}, $$
where $h(t,x)$ is the height of the fluid column. Let the fluid
depth be $1$ when the system is at the equilibrium state. We
introduce the deformation of the beam $\eta(t,x)$, thereby the
fluid column is $h(t,x)=1+\eta(t,x)$. The fluid is supposed
to be homogeneous with density $\rho_f$ and constant viscosity 
$\mu$. The velocity and the pressure in the fluid are denoted by 
$u(t,x)$ and $p(t,x)$, respectively. With the above notation, 
the viscous incompressible Navier-Stokes equations in $\Om_h$, 
for all $t\in(0, T)$, read
\begin{equation}\label{fluideq}
\left\{\begin{aligned}
&\rho_f(\prt_t u+u\cdot\nabla u)-\div \sigma(u, p)=0 &\quad\text{ for all }
(x,y)\in\Om_h,\\
&\div u=0 &\quad\text{ for all }(x,y)\in \Om_h.
\end{aligned}
\right.
\end{equation}
The Cauchy stress tensor $\sigma(u,p)$ is defined by
$$\sigma(u,p)=2\mu D(u)-p\mathbb{I}_{2\times 2}, $$
where $\mathbb{I}_{2\times 2}$ is identity matrix of order $2$ and $D(u)$ is the 
deformation tensor:
$$D(u)=\frac{1}{2}\left(\nabla u+\left(\nabla u\right)^\intercal
\right). $$
Let the density of the beam be $\rho_s$ (constant), the linearly 
elastic beam equation is
\begin{equation}\label{beameq}
\rho_s\prt_t^2h-\beta\prt_x^2h+\alpha\prt_x^4 h=\phi(u,p,h) 
\quad\text{ for all }(t,x)\in(0, T)\times (0, L),
\end{equation}
where $\beta$ and $\alpha$ are positive coeficients on the 
properties of the beam. For more information on the derivation of this equation see~\cite{ciarlet2000theory,muha2013existence}.
The kinematic condition means that the fluid on the surface is 
consistent with the motion of the beam, i.e.
\begin{equation}\label{kineq}
u(t, x, h(t,x))=\prt_th\m {\rm e}_2 \quad\text{ for all }(t,x)\in(0, T)\times 
(0, L).
\end{equation}
The dynamic condition is to balance the forces on the surface:
\begin{equation}\label{dynamiceq}
\phi(u,p,h)(t,x)=-{\rm e_2}\cdot\sigma(u,p)(t,x, h(t,x)) (-\prt_x
h\m{\rm e_1}+{\rm e_2}).
\end{equation}
In the above expressions, $({\rm e_1}, {\rm e_2})$ is the 
canonical basis in $\rline^2$. 
To make the estimates easier to follow, we assume that the 
fluid and the structure are $L-$periodic in $x-$direction, i.e.\ for every $(t,x,y)\in (0, T)
\times \Om_h$ 
\begin{equation}\label{periodic}
u(t,x,y)=u(t,x+L, y),\quad h(t,x)=h(t,x+L), \quad (\prt_xh)(t,x)=(\prt_xh)(t, x+L).
\end{equation} 
Moreover, we consider no-slip boundary condition on the bottom of
the fluid domain:
\begin{equation}\label{noslip}
u(t,x,0)=0\quad\text{ for all }(t,x)\in(0, T)\times (0, L).
\end{equation}
To close the system, we propose the initial data as follows:
\begin{equation}\label{initiald}
u(0,x,y)=u_0(x,y), \quad h(0,x)=h_0(x),\quad (\prt_th)(0, x)=
h_1(x)\quad\text{ for all }(x,y)\in\Om_{h_0}.
\end{equation}

Further, we assume that the initial data $(u_0, h_0, h_1)$ satisfy the following compatibility conditions:
\begin{equation}\label{initialcond}
\begin{aligned}
&u_0(x,0)=0, \quad u_0(x, h_0(x))=h_1(x){\rm e_2}\quad  &\text{for all } x\in (0, L),\\	
&\div u_0=0\quad &\text{for all } (x,y)\in \Om_{h_0},\\
&\min_{x\in (0, L)} h_0(x)>\delta>0, \quad \int_0^L h_1(x)\dd x=0,
\end{aligned}
\end{equation}
which allow a strong solution in accordance to the compatibility conditions derived in Subsection~\ref{ssec:comp} below.

\subsection{Main results}
The main result of the paper is an estimate that implies the existence of a strong solution for smooth but arbitrary large data up to the point of collision between the beam and the bottom of the fluid.  
\begin{thm}
	\label{thm:main}
	Let $(u_0, h_0, h_1)$ satisfy \rfb{initialcond} and $h_0\in H^4(0,L)$, $h_1\in H^2(0,L)$, $u_0\in H^2(\Om_{h_0})$, there exists unique strong solution to the system \eqref{fluideq}--\eqref{initiald} that satisfies the following a-priori estimate:
	\begin{equation*}
	\begin{aligned}
	&\lVert\prt_tu\rVert_{L^\infty(0, T; L^2(\Om_h))}^2+
	\lVert\nabla\prt_tu\rVert^2_{L^2(0, T; L^2(\Om_h))}+
	\lVert\prt_t^2h\rVert^2_{L^\infty(0, T; L^2(0, L))}+\lVert\prt_th\rVert^2_{L^\infty(0, T; H^2(0, L))}\\
	&+\lVert u\rVert_{L^\infty(0, T; H^2(\Om_h))}^2+
	\lVert u \rVert^2_{L^4(0, T; H^\frac{5}{2}(\Om_h))} + \lVert p\rVert_{L^\infty(0, T; H^1(\Om_h))}^2+
	\lVert p \rVert^2_{L^4(0, T; H^\frac{3}{2}(\Om_h))} +
	\lVert h \rVert^2_{L^\infty(0, T; H^4(0, L))}
	\\
	&\leq C\left(h_{\min}, \lVert u_0
	\rVert_{L^2(\Om_{h_0})}, \lVert 
	h_0\rVert_{L^2(0, L)}, \lVert h_1\rVert_{L^2(0, L)}\right)
	\Big(\lVert u_0
	\rVert_{H^2(\Om_{h_0})}^2+ \lVert 
	h_0\rVert_{H^4(0, L)}^2 + \lVert h_1\rVert_{H^2(0, L)}^2\Big),
	\end{aligned}
	\end{equation*}
	as long as $h(t,x)\geq h_{\min}$ for all $(t,x)\in (0, T)\times (0, L)$.
\end{thm}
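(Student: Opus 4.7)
\emph{Strategy.} My plan is first to pull back the system to a fixed reference cylinder $\Omega = (0,L)\times(0,1)$ via an ALE map such as $\Psi_h(t,x,s)=(x, s\,h(t,x))$, obtaining a transformed Navier--Stokes system with $h$-dependent, uniformly elliptic coefficients (as long as $h\geq h_{\min}$) coupled to the beam equation on the flat interface $s=1$. Multiplying the momentum equation by $u$ and the beam equation \eqref{beameq} by $\partial_t h$ and using the kinematic coupling \eqref{kineq} produces the standard energy identity, from which I read off $u\in L^\infty_t L^2_x\cap L^2_t H^1_x$, $\partial_t h \in L^\infty_t L^2_x$, and $h\in L^\infty_t H^2_x$. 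The whole proof then hinges on upgrading this to a bound on the time-derivative $\partial_t u$; the spatial regularity of $u$, $p$ and $h$ will follow from an elliptic bootstrap for a Stokes problem at fixed time.

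\emph{The time-differentiated identity.} I would differentiate the coupled system in time and test the resulting fluid equation by $\partial_t u$ and the differentiated beam equation by $\partial_t^2 h$. The crucial point is that differentiating the kinematic condition in time, $\partial_t u|_{y=h} + (\partial_t h)\partial_y u|_{y=h} = \partial_t^2 h\,\mathrm{e}_2$, produces exactly the trace needed so that the boundary coupling closes up to geometric remainders. Schematically this gives
\begin{equation*}
\tfrac{1}{2}\tfrac{d}{dt}\Big(\rho_f\lVert\partial_t u\rVert^2_{L^2(\Omega_h)} + \rho_s\lVert\partial_t^2 h\rVert^2_{L^2} + \beta\lVert\partial_x\partial_t h\rVert^2_{L^2} + \alpha\lVert\partial_x^2\partial_t h\rVert^2_{L^2}\Big) + 2\mu\lVert D(\partial_t u)\rVert^2_{L^2(\Omega_h)} = \mathcal{R},
\end{equation*}
where $\mathcal{R}$ gathers the commutator of $\partial_t$ with the convection $u\cdot\nabla u$, the geometric terms produced by $\partial_t\Psi_h$ acting on spatial derivatives of $u$ and $p$, and the Reynolds-type boundary integrals arising from the time dependence of $\Omega_h$.

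\emph{The borderline estimate.} The only truly critical term in $\mathcal{R}$ is $\int_{\Omega_h}(\partial_t u\cdot\nabla u)\cdot\partial_t u$. Using Ladyzhenskaya's two-dimensional inequality together with Korn on $\Omega_h$ (uniform in $t$ since $h\geq h_{\min}$), I would estimate this term by
\begin{equation*}
\lVert\partial_t u\rVert^2_{L^4}\lVert\nabla u\rVert_{L^2} \leq C\lVert\partial_t u\rVert_{L^2}\lVert\nabla\partial_t u\rVert_{L^2}\lVert\nabla u\rVert_{L^2} \leq \varepsilon\lVert\nabla\partial_t u\rVert^2_{L^2} + C\lVert\nabla u\rVert^2_{L^2}\lVert\partial_t u\rVert^2_{L^2},
\end{equation*}
absorb the gradient in the viscous dissipation, and close by Gronwall using that $\lVert\nabla u\rVert^2_{L^2}$ is $L^1_t$ by the basic energy. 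The remaining geometric contributions to $\mathcal{R}$ involve $\partial_t h$ with at most one spatial derivative; by 2D Sobolev embedding they sit in $L^\infty_t H^1_x$ and are subcritical in the time-derivative count, so they are handled by trace inequalities on $s=1$ and Young. The identity then yields the claimed bounds on $\partial_t u$, $\partial_t^2 h$ and $\partial_t h$, with initial values finite precisely because $u_0\in H^2$, $h_0\in H^4$ and $h_1\in H^2$ together with \eqref{initialcond}.

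\emph{Spatial bootstrap, construction, and main obstacle.} Once $\partial_t u\in L^\infty_t L^2_x\cap L^2_t H^1_x$ is in hand, I would treat the Navier--Stokes system at each fixed $t$ as a stationary Stokes problem on $\Omega_h$ with $L^2$-forcing $\rho_f(\partial_t u+u\cdot\nabla u)$ and beam-induced boundary data, and invoke elliptic regularity to get $u\in L^\infty_t H^2_x$, $p\in L^\infty_t H^1_x$; reinserting into \eqref{beameq} then promotes $h$ to $L^\infty_t H^4_x$. The $L^4_t H^{5/2}_x$ and $L^4_t H^{3/2}_x$ bounds come from a second Ladyzhenskaya-type interpolation between the $L^2_t$-information on $\nabla\partial_t u$ (and hence on third spatial derivatives via Stokes) and $L^\infty_t H^2_x$. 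Rigorously, I would run the above estimate on a regularized Galerkin or decoupled iteration scheme on the reference cylinder, check that the a-priori bounds are uniform in the regularization, and pass to the limit; uniqueness follows by a parallel difference estimate. The \textbf{main obstacle} I anticipate is the derivation of the time-differentiated energy identity itself: unlike the visco-elastic setting of~\cite{grandmont2016existence}, there is no solid dissipation available to absorb the geometric remainders generated on the moving interface, so the cancellation between the fluid's boundary stress and the structure forcing must be \emph{exact}, and the 2D Ladyzhenskaya step used to tame $\int(\partial_t u\cdot\nabla u)\cdot\partial_t u$ leaves no margin whatsoever.
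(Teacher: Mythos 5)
Your high-level strategy coincides with the paper's: pull back to a fixed cylinder, run the energy estimate, then differentiate the system in time and close a borderline Ladyzhenskaya-type estimate using the viscous dissipation on $\nabla\partial_t u$, and finally bootstrap spatial regularity via the steady Stokes operator and run the whole argument on a Galerkin scheme. The critical convective term you single out, $\int(\partial_t u\cdot\nabla u)\cdot\partial_t u$, is indeed the scaling-critical one (it is $r_{11}$ in the paper's notation), and your Ladyzhenskaya--Gronwall treatment of it is correct.

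There is, however, a genuine gap in your handling of the pressure in the time-differentiated equation, which is precisely the step the paper flags as its \emph{central difficulty} (``the key \ldots is the correct treatment of the pressure''). After the pullback, the velocity $\hat u$ satisfies the \emph{modified} divergence condition $\div(B_h^{\intercal}\hat u)=0$, and $\partial_t\hat u$ fails to satisfy it because $\partial_t B_h\neq 0$; hence $\partial_t\hat u$ is \emph{not} an admissible test function, and testing the differentiated momentum equation with it does not annihilate the pressure. (Even if one, as you suggest, works directly in the moving domain $\Omega_h(t)$, the normal trace $\partial_t u\cdot n$ on the moving top boundary equals $\partial_t^2 h - \partial_t h\,\partial_y u^2$, not $\partial_t^2 h$, so the boundary pressure integral does not cancel cleanly against the beam force either.) The paper overcomes this by (i) correcting the test function to $\partial_t\hat u + G$ with $G=B_h^{-\intercal}\partial_t B_h^{\intercal}\hat u$, which restores solenoidality in the modified sense and recovers the correct trace $\partial_t^2 h\,\mathrm e_2$ on $z=1$; and then (ii) dealing with the \emph{remaining} pressure term $\int_{\Omega_1}\hat p\,\div\bigl(\partial_t B_h^{\intercal}(\partial_t\hat u + G)\bigr)$, which is nonzero, by eliminating $\hat p$ via the momentum equation tested against a second auxiliary field $\varphi$ (Lemma~\ref{pressurelemma}). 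Your proposal sweeps these into ``geometric remainders \ldots handled by trace inequalities and Young,'' but since $p$ is not controlled a priori, a bare pressure remainder cannot be closed that way; without the $G$- and $\varphi$-corrections the time-derivative estimate does not close. The same subtlety actually propagates to the Galerkin stage: the paper designs the basis in terms of $\hat v=B_h^{\intercal}\hat u$ (standard solenoidal fields) precisely so that multiplying by $\alpha_N'$ implements the corrected test function $\partial_t\hat u+G$ discretely, and decouples the geometry $\tilde h$ so the basis is time-independent. That architectural choice is what makes the formal estimate rigorous, and it is not visible in your outline.
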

The theorem above shows in particular that for any $h_0>\delta$ satisfying the regularity assumptions above, there exists a minimal time interval $[0,T_0]$ for which a strong solution is guaranteed; this is demonstrated in Remark~\ref{rem:time} below. The estimate {\em conserves} all quantities appearing on the left-hand-side and {\em improves} the regularity of the fluid due to its viscous term.
Hence, the estimate holds up to the point of a collision between the beam and the bottom of the fluid canister. We follow here the convention that a strong solution means that all quantities in the above PDEs are valid almost everywhere; actually all quantities are at least in $L^\infty(L^2)$. 

We wish to point out that the theory in~\cite{grandmont2016existence} implies that {\em viscous beams} can not touch the bottom of the fluid-domain. This is not known for the hyperbolic problem and does not follow from the smoothness shown in this paper directly. It therefore remains a deep and difficult question, whether contact is possible or not, when a hyperbolic beam is considered. 

\subsection{Organization of the paper}
The paper is organized as follows. In Section \ref{section2} we introduce some notation and function spaces and preliminary analysis used later in the work. This follows the main part of the paper, i.e. Section \ref{sectionproof}, which is the formal a-priori estimate using the time-derivative of the coupled system transferred to a fixed geometry. This rather involving estimate follows a technical Section \ref{Garlekinsec}, where a strong solution {\em satisfying the formal a-priori estimate} is constructed using a Galerkin approximation for which the formal a-priori estimate is valid rigorously. Due to the geometrically coupled setting this construction has to be done with particular care, which itself could well be of independent interest.

\section{Preparation and preliminaries}\label{section2}
\subsection{Notation} 
We introduce some notation in this part which will be used throughout this paper.
For two non-negative quantities $f$ and $g$, we write $f\lesssim g$  if there is a $c>0$ such that $f\leq\,c g$. If necessary, we specify particular dependencies.

We consider function spaces that are periodic in the first spacial variable $x$.
For an open set $\mathcal O\subset\rline^2$ we denote  by $L^p(\mathcal O)$ and $W^{k,p}(\mathcal O)$ for $p\in[1,\infty]$ and $k\in\mathbb N$, the usual Lebesgue and Sobolev spaces over $\mathcal O$. For $p\in[1,\infty)$, the fractional Sobolev space (Sobolev-Slobodeckij space) with differentiability $s>0$ with $s\notin\mathbb N$ will be denoted by $W^{s,p}(\mathcal O)$. As usual, we use the notation $H^s(\mathcal{O}):=W^{s,2}(\mathcal{O})$ for the case $p=2$. 
The function spaces of continuous or $\alpha$-H\"older-continuous functions, $\alpha\in(0,1)$,
are denoted by $C(\overline{\mathcal O})$ or $C^{0,\alpha}(\overline{\mathcal O})$ respectively, where $\overline{\mathcal O}$ is the closure of $\mathcal O$. Similarly, we write $C^1(\overline{\mathcal O})$ and $C^{1,\alpha}(\overline{\mathcal O})$.

We denote $\by=(x, y)$ for the space variable in the time changing domain $\Om_h$ and $\bz=(x, z)$ for the space variable in $\Om_1=[0,L]\times [0,1]$, respectively. 
For a Banach space $X$, we use the shorthand $L^p_tX$ for $L^p(I;X)$ for $s\in \rline$. For instance, we write $L^p_t(W^{1,p})$ for $L^p(I;W^{1,p}(\mathcal O))$. Similarly, $W^{s,p}_t(X)$ stands for $W^{s,p}(I;X)$. We will use the shorthand notations $L^p_\by$ (or $L^p_\bz$) and $W^{s,p}_\by$ (or $W^{s,p}_\bz$) in the case of $2$-dimensional domains (typically spaces defined over $\Omega_h\subset\rline^2$ or $\Omega_1\subset\rline^2$). Finally, for vector-valued function $f$, we use $f^1$ and $f^2$ represent its first and second component, respectively. Hence we denote $f=\begin{bmatrix} f^1 & f^2\end{bmatrix}^\intercal$.
\subsection{Compatibility conditions}
\label{ssec:comp}
Note that by using the divergence-free condition of the fluid-velocity and the boundary conditions \rfb{kineq} and \rfb{noslip}, we derive that
\begin{align*}
\prt_th(t,x)&=\prt_y\int_0^{h(t,x)}u^2(t,x,y)\, dy=-\int_0^{h(t,x)}\prt_x u^1(t,x,y)\dd y
\\
&=-\prt_x\left(\int_0^{h(t,x)}u^1(t,x,y)\dd y\right) + \underbrace{\prt_xh(t,x)u^1(t,x,h(t,x))}_{=0}.
\end{align*}
According to the $x-$periodic setting \rfb{periodic}, we thereby obtain that
$$\int_0^L\prt_th(t,x)\dd x=0\quad\text{ for all }t\in(0, T). $$
This together with the beam equation \rfb{beameq} further implies that
\begin{equation}\label{zeromeansource}
\int_0^L\phi(u, p, h)(t,x)\dd x=0\quad\text{ for all }t\in(0, T).	
\end{equation}

\subsection{Change of variables}\label{sub1}
We notice that, via a change of variables, the fluid-beam system can 
be rewirtten into a fixed spatial domain. For every function $f$, 
we set
\begin{equation}\label{newvar}
\hat f(t,x,z)=f(t,x,h(t,x)z).
\end{equation}
The system \rfb{fluideq}--\rfb{initiald} can be transformed in the 
domain $\Om_1$:
$$\Om_1=\left\{(x,z)\in\rline^2\left|\m x\in(0, L),\m\m z\in(0, 1)
\right.\right\}.$$
Using the relation \rfb{newvar} and the equations \rfb{fluideq}--
\rfb{beameq}, it is not difficult to derive that the new system for $(\hat u, \hat p, h)$
defined in $\Om_1$ satisfies
\begin{subequations}\label{hatsys}
	\begin{alignat}{15}
	&\rho_fh\m\prt_t\hat u+\rho_f\m(\hat u-\prt_t\chi_h)\cdot(B_h
	\nabla)\hat u-\mu\div \left((A_h\nabla)\hat u\right)+(B_h
	\nabla)\hat p=0,\label{fluid}\\
	&\div (B_h^\intercal\hat u)=0,\label{newdiv}\\
	& \rho_s\prt_t^2 h-\beta\prt_x^2h+\alpha\prt_x^4h=\hat \phi(\hat u, 
	\hat p, h)(t,x),\label{beamnew}
	\end{alignat}
\end{subequations}
with the boundary conditions
\begin{equation}\label{hatinit}
\hat u(t,x,1)=\prt_th\m{\rm e_2}\quad\text{ for all }(t,x)\in(0, T)\times(0, L),
\end{equation}
and the periodic conditions, for every $(t,x,z)\in (0, T)\times \Om_1$,
\begin{equation}\label{priohat}
\hat u(t, x, z)=\hat u(t,x+L, z), \quad h(t,x)=h(t, x+L).
\end{equation}
To derive the structure equation, by using the condition $u^1(t, x, h(t,x))=0$ and $\div u=0$, we note that 
$$(\nabla u)^\intercal(t, x, h(t,x))(-\prt_xh{\rm e_1}+{\rm e_2})\cdot {\rm e_2}=0. $$ 
This implies that the force term $\phi(u, p, h)$ can be simplified as
$$\phi(u,p,h)(t,x)=p(t,x,h(t,x))-\mu{\rm e_2}\cdot \nabla u(t,x,h(t,x))(-\prt_xh{\rm e_1}+{\rm e_2}). $$
Hence, after change of variables the force term $\hat \phi(\hat u, \hat p, h)$ in \rfb{beamnew}, for every $(t,x)
\in(0, T)\times(0, L)$, reads 
\begin{equation*}\label{hatphi}
\hat \phi(\hat u, \hat p, h)(t,x)=-{\rm e_2}\cdot\left(\mu(A_h\nabla)
\hat u-\hat pB_h\right)(t,x,1)\m{\rm e_2}.
\end{equation*} 
The corresponding initial data are thereby given by $(\hat u_0, h_0, 
h_1)$ where $\hat u_0(x,z)=u_0(x, h_0z)$.
The matrices $A_h$, $B_h$ and $\chi_h$ in \rfb{hatsys} are as follows:
\begin{equation}\label{Ah}
\begin{aligned}
A_h=\begin{bmatrix}h & -z \prt_xh \\ -z \prt_x h & \frac{1}{h}+
\frac{(z \prt_x h)^2}{h} \end{bmatrix}&, \qquad B_h=\begin{bmatrix} 
h & -z \prt_xh  \\ 0 &1 \end{bmatrix},\qquad
\chi_h=&\begin{bmatrix} x & hz \end{bmatrix}^\intercal.
\end{aligned}
\end{equation}
For the derivation of the system \rfb{hatsys}--\rfb{priohat}, please refer to for instance \cite[Section 2]{GraHilLaq19} and \cite[Section 4]{lequeurre2011existence}  for more details.

For the above change of variables, we have the following regularity 
relation between the old (without hat) and new functions (with hat).

\begin{prop}\label{relareg}
	Assume that $\min_{(t,x)\in (0, T)\times (0, L)} h(t, x)>h_{\min}>0$. 
	Then we have:
	\begin{itemize}
		\item the mapping $f\mapsto \hat f$ introduced in \rfb{newvar} is 
		a linear homeomorphism from $L^2((0, T)\times \Om_h)$ onto 
		$L^2(0, T; L^2(\Om_1))$;
		\item For every $f\in C^\infty([0, T]\times [0, L])$, the following 
		inequalities hold:
		\begin{align*}\lVert \nabla \hat f\rVert_{L^2(\Om_1)}&\leq C(\lVert h
		\rVert_{W^{1, \infty}_x}, \lVert h^{-1}\rVert_{L^\infty_x})\lVert 
		\nabla f\rVert_{L^2(\Om_h)},
		\\
		\lVert\prt_t\hat f\rVert_{L^2(\Om_1)}&\leq C(\lVert h^{-1}\rVert_{L^\infty_x})\lVert 
		\prt_t f\rVert_{L^2(\Om_h)}+C(\lVert h^{-1}\rVert_{L^\infty_x}, \lVert\prt_t h\rVert_{L^\infty_x})\lVert\nabla f\rVert_{L^2(\Om_h)},
		\\	
		\lVert\nabla^2 \hat f\rVert_{L^2(\Om_1)}&\leq C(\lVert h\rVert_{W^{1,\infty}_x}, \lVert h^{-1}\rVert_{L^\infty_x}, \lVert\prt_x^2 h\rVert_{L^2_x})\lVert\nabla^2 f\rVert_{L^2(\Om_h)}
		\\
		\lVert\nabla\prt_t\hat f\rVert_{L^2{(\Om_1)}}&\leq C(\lVert h
		\rVert_{W^{1, \infty}_x}, \lVert h^{-1}\rVert_{L^\infty_x})\lVert 
		\nabla\prt_t f\rVert_{L^2(\Om_h)}
		+C(\lVert\prt_t h\rVert_{L^\infty_x}, \lVert h\rVert_{W^{1, \infty}_x}, 
		\lVert h^{-1}\rVert_{L^\infty_x})\lVert \nabla^2f\rVert_{L^2(\Om_h)}\\
		&\quad +C(\lVert\prt_t\prt_x h\rVert_{L^\infty_x}, \lVert
		\prt_t h\rVert_{L^\infty_x}, \lVert h^{-1}\rVert_{L^\infty_x})
		\lVert\nabla f\rVert_{L^2(\Om_h)}.
		\end{align*}
		Conversely, 
		\begin{align*}
		\lVert \nabla f\rVert_{L^2(\Om_h)}&\leq C(\lVert h
		\rVert_{W^{1, \infty}_x}, \lVert h^{-1}\rVert_{L^\infty_x})\lVert 
		\nabla \hat f\rVert_{L^2(\Om_1)},
		\\		
		\lVert\prt_t f\rVert_{L^2(\Om_h)}
		&\leq C(\lVert h\rVert_{L^\infty_x})\lVert 
		\prt_t \hat f\rVert_{L^2(\Om_1)}+C(\lVert h^{-1}\rVert_{L^\infty_x}, \lVert\prt_t h\rVert_{L^\infty_x})\lVert\nabla\hat f\rVert_{L^2(\Om_1)},
		\\
		\lVert\nabla^2 f\rVert_{L^2(\Om_h)}
		&\leq C(\lVert h\rVert_{W^{1,\infty}_x}, \lVert h^{-1}\rVert_{L^\infty_x}, \lVert\prt_x^2 h\rVert_{L^2_x})\lVert\nabla^2 \hat f\rVert_{L^2(\Om_1)}, 
		\\
		\lVert\nabla\prt_t f\rVert_{L^2{(\Om_h)}}&\leq C(\lVert h
		\rVert_{W^{1, \infty}_x}, \lVert h^{-1}\rVert_{L^\infty_x})\lVert 
		\nabla\prt_t \hat f\rVert_{L^2(\Om_1)}
		+C(\lVert\prt_t h\rVert_{L^\infty_x}, \lVert h\rVert_{W^{1, \infty}_x}, 
		\lVert h^{-1}\rVert_{L^\infty_x})\lVert \nabla^2\hat f\rVert_{L^2(\Om_1)}\\
		&\quad+C(\lVert\prt_t\prt_x h\rVert_{L^\infty_x}, \lVert
		\prt_t h\rVert_{L^\infty_x}, \lVert h^{-1}\rVert_{L^\infty_x})
		\lVert\nabla \hat f\rVert_{L^2(\Om_1)}.
		\end{align*}
	\end{itemize}
\end{prop}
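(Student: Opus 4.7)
My plan is to establish all inequalities by direct computation using the chain rule for the diffeomorphism $\chi_h(t,x,z)=(x,h(t,x)z)$, whose Jacobian equals $h(t,x)$ and by assumption is pointwise bounded from above and away from zero. The $L^2$-homeomorphism claim is then immediate from the change of variables $\int_{\Om_h}|f|^2\,d\by=\int_{\Om_1}|\hat f|^2 h\,dx\,dz$, which makes $\|\hat f\|_{L^2(\Om_1)}$ and $\|f\|_{L^2(\Om_h)}$ comparable with constants depending only on $\|h\|_{L^\infty_x}$ and $\|h^{-1}\|_{L^\infty_x}$.

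For the first-order estimates I would record the pointwise chain-rule identities
\begin{equation*}
\partial_z \hat f = h\,(\partial_y f)\circ\chi_h,\quad \partial_x \hat f=(\partial_x f+z\,\partial_x h\,\partial_y f)\circ\chi_h,\quad \partial_t \hat f=(\partial_t f+z\,\partial_t h\,\partial_y f)\circ\chi_h,
\end{equation*}
together with their inverse counterparts obtained by solving for $(\partial_x f,\partial_y f,\partial_t f)$. Each of the four first-order inequalities and their reverse counterparts then follows by pulling the $h$-coefficients out in $L^\infty_x$ (using $0\le z\le 1$), applying $y=h(t,x)z$ to convert an $L^2(\Om_1)$-norm into an $L^2(\Om_h)$-norm up to the Jacobian factor $h^{-1}$, and using Cauchy--Schwarz to separate $h$-dependent quantities from derivatives of $f$. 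This directly produces the stated dependencies on $\|h\|_{W^{1,\infty}_x}$, $\|h^{-1}\|_{L^\infty_x}$ and $\|\partial_t h\|_{L^\infty_x}$.

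For the second-order estimates I differentiate once more. Most new terms again carry only $L^\infty_x$-coefficients such as $(z\partial_x h)^2$, $h\cdot z\partial_x h$ or $z\partial_x h\cdot z\partial_t h$, and are handled as above. The genuinely new features are the terms containing $\partial_x^2 h$ (in $\nabla^2\hat f$) and $\partial_t\partial_x h$ (in $\nabla\partial_t\hat f$). The latter enters only as an $L^\infty_x$-multiplier and poses no difficulty. \emph{The main obstacle} is the $\partial_x^2 h$-term, which is only in $L^2_x$: after pulling it out of the $z$-integral and changing variables one is reduced to
\begin{equation*}
\|z\,\partial_x^2 h\,(\partial_y f)\circ\chi_h\|_{L^2(\Om_1)}^2 \;\le\; \|h^{-1}\|_{L^\infty_x}\,\|\partial_x^2 h\|_{L^2_x}^2\,\|\partial_y f\|_{L^\infty_x L^2_y}^2 .
\end{equation*}
To control the anisotropic norm on the right I would apply the one-dimensional Sobolev embedding $H^1(0,L)\hookrightarrow L^\infty(0,L)$ to the slice function $g(x):=\|\partial_y f(x,\cdot)\|_{L^2_y}^2$, bounding its $x$-derivative by $\|\nabla f\|_{L^2(\Om_h)}\|\nabla^2 f\|_{L^2(\Om_h)}$ via Leibniz's rule and Cauchy--Schwarz (with the boundary contribution at $y=h(x)$ handled by a standard trace inequality), and absorbing any lower-order $\|\nabla f\|_{L^2}$-contribution into the first-order bound already established. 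The reverse second-order inequalities follow from the identical computation applied to the inverse map $(x,y)\mapsto(x,y/h(t,x))$, whose algebraic structure is the same with $h^{-1}$ in place of $h$; no new ideas are needed.
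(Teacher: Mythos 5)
Your plan is essentially the same chain-rule-plus-change-of-variables argument that the paper relies on (its proof is literally one sentence: ``follows directly from the calculations by using the change of variables and the chain rule''), and your computations of the first-order identities, the $L^2$-equivalence with Jacobian $h$, and the splitting of second derivatives are all correct. You go notably further than the paper by isolating the single genuinely delicate point — the coefficient $\partial_x^2 h$ sits only in $L^2_x$, so the term $z\,\partial_x^2 h\,(\partial_y f)\circ\chi_h$ cannot be treated like the $L^\infty_x$-coefficients — and your fix via the anisotropic bound $\lVert\partial_y f\rVert_{L^\infty_xL^2_y}$, the $1$D Sobolev embedding for the slice function $g(x)=\lVert\partial_y f(x,\cdot)\rVert_{L^2_y}^2$, and a trace bound on $y=h(x)$ is sound. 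One small caveat worth recording: this argument necessarily produces a lower-order $\lVert\nabla f\rVert_{L^2(\Omega_h)}$-contribution on the right of the $\nabla^2$-bound (indeed $f(x,y)=y$ has $\nabla^2 f\equiv 0$ but $\partial_x^2\hat f=z\,\partial_x^2 h\not\equiv 0$), so the proposition as literally stated is a slight overstatement and should be read as an equivalence of $H^2$-type norms up to lower-order terms — a point your proof handles correctly by ``absorbing'' that term into the already-established first-order bound.
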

\begin{proof}
	The proof follows directly from the calculations by using the change 
	of variables and the chain rule for taking derivatives.
\end{proof}
\begin{rmk}\label{relahatenergy}
	{\rm Let $h\in L^\infty(0, T; H^2(0, L))\cap W^{1, \infty}
		(0, T; L^2(0, L))$, according to Proposition \ref{relareg}, we 
		obtain that
		$$\lVert \hat f\rVert_{L^\infty(0, T; L^2(\Om_1))}\sim \lVert f
		\rVert_{L^\infty(0, T; L^2(\Om_h))}, $$
		and
		$$\lVert \hat f\rVert_{L^2(0, T; H^1(\Om_1))}\sim \lVert f
		\rVert_{L^2(0, T; H^1(\Om_h))}, $$
		which means that each one can be controlled by the other one
		multiplied by a positive constant.
	}
\end{rmk}
\subsection{Energy estimate}
We introduce here the standard energy estimate for the system 
\rfb{fluideq}--\rfb{initiald}. For that the total mechanical energy
of the fluid-beam system \rfb{fluideq}--\rfb{initiald} is denoted by 
$E_{\rm tot}$, as follows:
\begin{equation*}\label{energfor}
E_{\rm tot}(t)=\frac{1}{2}\int_0^L\left(\rho_s|\prt_th(t,x)|^2
+\beta|\prt_xh(t,x)|^2+\alpha|\prt_x^2h(t,x)|^2\right) \dd x+\frac{1}{2}
\int_{\Om_h}\rho_f|u(t,x,y)|^2\dd \by.
\end{equation*}
For strong solutions the energy inequality is known to satisfy the following identity:
%
%
\begin{equation}\label{enerinequality}
E_{\rm tot}(t)+\int_0^t\int_{\Om_h}|\nabla u|^2\dd \by\dd t=
E_{\rm tot}(0)\quad\text{ for all }t\in (0, T).
\end{equation}
It can be derived by multiplying the beam equation with $\partial_t h$ and the fluid equation with $u$.
Recalling Remark \ref{relahatenergy}, the energy estimate 
\rfb{enerinequality} implies that
\begin{equation}\label{energyhat}
\begin{aligned}
&\lVert\prt_th\rVert^2_{L^\infty(0, T); L^2(0, L))}+\lVert\prt_x 
h\rVert^2_{L^\infty(0, T; L^2(0, L))}+
\lVert\prt_x^2h\rVert^2_{L^\infty(0, T; L^2(0, L))}\\
&\quad +\lVert\hat u\rVert^2_{L^\infty(0, T; L^2(\Om_1))}+\lVert\nabla 
\hat u\rVert^2_{L^2(0, T; L^2(\Om_1))}
\leq \lVert h_1\rVert_{L^2(0, L)}^2+\lVert h_0\rVert_{H^2(0, L)}^2+\lVert\hat u_0\rVert_{L^2(\Om_1)}^2:=C_0. 
\end{aligned}
\end{equation}

\subsection{H\"older continiuity in time-space}
We will also use the following interpolation to gain continuity in space and in time. Let $|x-y|<\frac{r}{2}$ and $0<t_2-t_1\leq r^\frac{3}{2}$, then
\begin{align}
\label{eq:hoeldercont}
\begin{aligned}
|h(t_1,x)-h(t_2,y)|& \leq  \Big|h(t_1,x)-\frac{1}{r}\int_{x}^{x+r}h(t_1,s)\, \ds\Big|
+\Big|\frac{1}{r}\int_{x}^{x+r}h(t_1,s)\, \ds -\frac{1}{r}\int_{x}^{x+r}h(t_2,s)\, \ds\Big|
\\
&
\quad + \Big|h(t_2,y)-\frac{1}{r}\int_{x}^{x+r}h(t_2,s)\, \ds\Big|
\\
& \leq 2\sqrt{C_0}r +\frac{1}{r}\int_{x}^{x+r}|h(t_1,s)-h(t_2,s)|\, \ds
\\
&\leq 2\sqrt{C_0}r + \frac{1}{r}\int_{x}^{x+r}\int_{t_1}^{t_2}|\partial_\tau h(\tau,s)|\, \dd \tau\, \dd s
\\
&\leq 2\sqrt{C_0}r + \Big(\frac{|t_2-t_1|}{r}\Big)^\frac{1}{2}\bigg(\int_{x}^{x+r}\int_{t_1}^{t_2}|\partial_\tau h(\tau,s)|^2\, \dd \tau\, \dd s\bigg)^\frac{1}{2}
\\
&\leq \sqrt{C_0}(2r+\frac{|t_2-t_1|}{\sqrt{r}})\leq 3\sqrt{C_0}r.
\end{aligned}
\end{align}
This implies that we have the continuous embedding: 
$$L^\infty(0, T; H^2(0, L))\cap W^{1,\infty}(0, T; L^2(0, L))\hookrightarrow C^{0, \alpha}([0, T]; C^1[0, L]),$$ 
for $0<\alpha\leq \frac{2}{3}.$ The estimate also implies that necessarily for some time no contact between the beam and the bottom may occur.
\begin{rmk}[Minimal interval of existence]
	\label{rem:time}
	{\em 
		The energy estimate implies the uniform bounds of 
		\[
		\norm{\partial_t h}^2_{L^\infty(0, T; L^2(0, L))}+\norm{h}^2_{L^\infty(0, T; H^2(0, L))}\leq C_0,
		\]
		just in dependence of the initial data. This allows the following interpolation estimate. Assume that $h_0(x)\geq \delta$ for all $x\in (0, L)$, then
		for $r>0$ to be chosen later we find
		\begin{align*}
		h(t,x)& \geq  h(t,x)-\frac{1}{r}\int_{x}^{x+r}h(t,s)\, \ds+\frac{1}{r}\int_{x}^{x+r}h(t,s)\, \ds-\frac{1}{r}\int_{x}^{x+r}h_0(s)\, \ds+ \delta
		\\
		& \geq \delta-\sqrt{C_0} r +\frac{1}{r}\int_{x}^{x+r}\left(h(t,s)-h_0(s)\right)\, \ds
		\geq \delta-\sqrt{C_0} r+\frac{1}{r}\int_{x}^{x+r}\int_0^t\partial_\tau h(\tau,s)\, \dd \tau\, \dd s
		\\
		&\geq \delta-\sqrt{C_0} r-\Big(\frac{t}{r}\Big)^\frac{1}{2}\bigg(\int_{x}^{x+r}\int_0^t|\partial_\tau h(\tau,s)|^2\, \dd \tau\, \dd s\bigg)^\frac{1}{2}
		\geq \delta-\sqrt{C_0}\Big(r+\frac{t}{\sqrt{r}}\Big)\geq \delta-\sqrt{C_0}t^{\frac{2}{3}},
		\end{align*}
		for $r=t^{\frac{2}{3}}$. Hence as long as 
		$T<\left(\delta/\sqrt{C_0}\right)^{\frac{3}{2}}$ no contact is possible, thereby there exists $h_{\min},\m h_{\max}>0$ such that $h\in [h_{\min}, h_{\max}]$ for every $(t,x)\in (0, T)\times (0, L)$. 
	}
\end{rmk}


\section{Formal time-derivative estimate}\label{sectionproof}

In this section, we formally present the time-derivative estimate, which will be rigorously verified in Section \ref{Garlekinsec} by the Galerkin approximation.
And the proof is exaclty corresponding to the proof of 
Theorem \ref{thm:main-disrete} in the discrete level. After passing to the limit, this eventually holds in the continuous level.

In this central part of the paper we shall obtain the estimate for $\prt_t\hat u$ based 
on the system \rfb{hatsys}--\rfb{priohat}. The estimate we aim for is 
\begin{equation}\label{formal-est}
\begin{aligned}
&\rho_f\lVert\prt_t\hat u\rVert_{L^\infty(0, T; L^2(\Om_1))}^2+\mu
\lVert\nabla\prt_t\hat u\rVert^2_{L^2(0, T; L^2(\Om_1))}+\rho_s
\lVert\prt_t^2h\rVert_{L^\infty(0, T; L^2(0, L))}^2\\
&\quad+\beta\lVert\prt_t\prt_x h\rVert_{L^\infty(0, T; L^2(0, L))}^2+\alpha
\lVert\prt_t\prt_x^2 h \rVert^2_{L^\infty(0, T; L^2(0, L))}
\\
&\leq C(h_{\min},C_0)\left(\lVert(\prt_t\hat u)(0)\rVert_{L^2(\Om_1)}^2 +  \lVert(\prt_t^2 h)(0)\rVert_{L^2(0, L)}^2+ \lVert 
h_0\rVert_{H^2(0, L)}^2\right).
\\
\end{aligned}
\end{equation}
Rigorously it follows from Theorem~\ref{thm:main-disrete}, where the estimate is performed on the Galerkin level.


\subsection{Excluding the pressure}
Note that $\prt_t\hat u$ 
is not a suitable test function since it does not satisfy the 
modified divergence free condition \rfb{newdiv}, we thereby to take 
its perturbation in the form of $\prt_t\hat u+G$. Therefore, we derive 
in what follows the specific expression for $G$.

Based on the modified divergence free condition \rfb{newdiv}, we have
$$0=\div(\prt_t(B_h^\intercal\hat u))=\div(\prt_tB_h^\intercal\hat u)
+\div(B_h^\intercal\prt_t\hat u), $$
which gives that $-\div(B_h^\intercal\prt_t\hat u)=\div(\prt_tB_h^
\intercal\hat u)$.
Recalling the matrix $B_h$ defined in \rfb{Ah}, we compute that 
$$\prt_tB_h^\intercal\hat u=\begin{bmatrix}\prt_th & 0 \\ -\prt_t
\prt_xhz & 0\end{bmatrix}\begin{bmatrix}\hat u^1 \\ \hat u^2 
\end{bmatrix} =\begin{bmatrix}\prt_th \hat u^1 \\ -\prt_t
\prt_xhz\hat u^1 \end{bmatrix}.$$
Let $G=B_h^{-\intercal}\prt_tB_h^\intercal\hat u$, then we get
\begin{equation}\label{G}
G=\begin{bmatrix} \frac{1}{h} & 0 \vspace{+1mm}\\ \frac{\prt_xhz}{h} & 
1\end{bmatrix}\begin{bmatrix}\prt_th\hat u^1 \\ -\prt_t\prt_xhz\hat u^1 
\end{bmatrix}=\begin{bmatrix}\frac{1}{h}\prt_th\hat u^1 \vspace{+1mm}\\ 
\frac{z}{h}\prt_th\prt_xh\hat u^1-\prt_t\prt_xhz\hat u^1 \end{bmatrix}. 
\end{equation}
With $G$ defined in \rfb{G}, we have 
$$\div(B_h^\intercal(\prt_t\hat u+G))=\div(B_h^\intercal\prt_t\hat u)
+\div(B_h^\intercal G)=0, $$
which implies that $\prt_t\hat u+G$ is a suitable test function for
the fluid equation \rfb{fluid}.

\begin{lem}\label{pressurelemma}
	For $G$ given in \rfb{G}, there exists $\varphi$ determined by $\hat u$ and $G$ such that $\varphi(x, 1)
	=0$ and
	\begin{equation*}\label{dealpre}
	\div\left(\prt_t B_h^\intercal(\prt_t\hat u+G)\right)=\div(B_h^\intercal
	\varphi),
	\end{equation*}
	where $\varphi$ is 
	\begin{equation}\label{findvarphi}
	\varphi=\begin{bmatrix} \varphi^1 \\ \varphi^2\end{bmatrix} =
	\begin{bmatrix} \frac{1}{h}\prt_th\prt_t\hat u^1+\frac{1}{h^2}
	|\prt_th|^2\hat u^1 \vspace{+2mm}\\ \frac{z}{h}\prt_th\prt_xh
	\prt_t\hat u^1+\frac{z}{h^2}|\prt_th|^2\prt_xh\hat u^1-z\prt_t
	\prt_xh\prt_t\hat u^1-\frac{z}{h}\prt_th\prt_t\prt_xh \hat u^1\end{bmatrix}.
	\end{equation}
\end{lem}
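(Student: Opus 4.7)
The strategy is to prove the stronger pointwise identity $\partial_t B_h^\intercal(\partial_t \hat u + G) = B_h^\intercal \varphi$, from which the divergence identity is immediate, and then to check the boundary condition $\varphi(x,1)=0$ separately from the kinematic constraint.

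First I would compute $\partial_t B_h^\intercal$ from the explicit form in \eqref{Ah}, obtaining
\[
\partial_t B_h^\intercal = \begin{bmatrix} \partial_t h & 0 \\ -z\,\partial_t \partial_x h & 0 \end{bmatrix},
\]
so that, using $G^1 = \tfrac{1}{h}\partial_t h\,\hat u^1$,
\[
\partial_t B_h^\intercal(\partial_t \hat u + G) = \begin{bmatrix} \partial_t h\bigl(\partial_t \hat u^1 + \tfrac{1}{h}\partial_t h\, \hat u^1\bigr) \\[1mm] -z\,\partial_t\partial_x h\bigl(\partial_t \hat u^1 + \tfrac{1}{h}\partial_t h\,\hat u^1\bigr) \end{bmatrix}.
\]
Since $B_h^\intercal$ is lower-triangular with invertible diagonal entry $h>0$ (guaranteed by $h\geq h_{\min}$), the system $B_h^\intercal \varphi = \partial_t B_h^\intercal(\partial_t \hat u + G)$ can be solved directly: the first row fixes $\varphi^1 = \tfrac{1}{h}\partial_t h\,\partial_t \hat u^1 + \tfrac{1}{h^2}(\partial_t h)^2 \hat u^1$, and then the second row gives $\varphi^2$ by substitution. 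A short computation shows this produces exactly the formula \eqref{findvarphi}.

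Next I would verify the trace condition. From the kinematic condition \eqref{hatinit} we have $\hat u^1(t,x,1)=0$, and differentiating in time yields $\partial_t \hat u^1(t,x,1)=0$ (the trace of $\hat u^1$ on $\{z=1\}$ is constantly zero). Substituting $z=1$ into \eqref{findvarphi}, every term of both $\varphi^1$ and $\varphi^2$ carries a factor of $\hat u^1$ or $\partial_t \hat u^1$, so $\varphi(x,1)=0$. Taking divergence of the pointwise identity then yields the claim.

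The argument is essentially a direct verification, and I do not anticipate any real obstacle: the lower-triangular structure of $B_h^\intercal$ makes the inversion immediate, and the vanishing of $\hat u^1$ on $\{z=1\}$ is built into the kinematic boundary condition. The only mild point of care is keeping track of the factors $z$, $\partial_x h$, and $\partial_t h$ when expanding $\varphi^2$ to match \eqref{findvarphi}. The reason the lemma will be useful later is that it allows one to rewrite time-differentiated pressure-like terms in the fluid equation in a form compatible with the modified divergence-free condition satisfied by $\partial_t \hat u + G$, so that shifting the test function by a correction built from $\varphi$ will preserve admissibility while eliminating the pressure (the boundary condition $\varphi(x,1)=0$ being what guarantees this correction is compatible with the shifted kinematic boundary condition).
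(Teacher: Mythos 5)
Your proposal is correct and matches the paper's approach exactly: the paper likewise defines $\varphi = B_h^{-\intercal}\partial_t B_h^\intercal(\partial_t\hat u + G)$ and reads off the formula, with the boundary condition following because $\hat u^1(t,x,1)=0$. You simply carry out the inversion of the lower-triangular $B_h^\intercal$ more explicitly, which is a helpful elaboration but not a different argument.
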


\begin{proof}
	The above $\varphi$ can be obtained directly by setting $\varphi=
	B_h^{-\intercal}\prt_t B_h^\intercal(\prt_t\hat u+G)$ with $G$ in 
	\rfb{G}.
\end{proof}

In the following we collect terms of an a-priori estimate that naturally relates to the time-derivative system. For that we collect various signed terms for the left hand side and error terms we put on the right hand side. The main effort will be to estimate the error terms.
We start by taking the derivative of \rfb{fluid} with respect to the time variable and multiplying the resulting equation by $\prt_t\hat u+G$. We then pick $t\in (0, T)$ and integrate over $(0,t)\times \Omega_1$,
\begin{equation}\label{firstineq}
\begin{aligned}
\int_0^t\int_{\Om_1}\left\{\rho_f\prt_th\prt_t\hat u+\rho_fh\prt_
t^2\hat u+\rho_f\prt_t\left((\hat u-\prt_t\chi_h)\cdot (B_h\nabla)
\hat u\right)-\mu\div((\prt_tA_h\nabla)\hat u)\right.\\
\quad\left.
-\mu\div((A_h\nabla)\prt_t\hat u)+\prt_tB_h \nabla\hat p+B_h 
\nabla\prt_t\hat p\right\}\m(\prt_t\hat u+G)\m\dd\bz\dd t=0.
\end{aligned}
\end{equation}
Note that we have
$$\int_0^t\int_{\Om_1}\rho_f\prt_th\prt_t\hat u(\prt_t\hat u+G)\m\dd\bz\dd t=
\underbrace{\int_0^t\int_{\Om_1}\rho_f\prt_th|\prt_t\hat u|^2\m\dd \bz\dd t}_{:=2R_1}
+\underbrace{\int_0^t\int_{\Om_1}
	\rho_f\prt_th\prt_t\hat u\cdot G \m\dd\bz\dd t}_{:=R_2},$$
and 
$$\int_0^t\int_{\Om_1}\rho_fh\prt_t^2\hat u(\prt_t\hat u+G)\m\dd\bz\dd t=\underbrace{\int_0^t
	\int_{\Om_1}\frac{\rho_f}{2}\frac{\dd }{\dd t}\left(|\prt_t\hat u|^2h
	\right)\m\dd\bz\dd t}_{:=L_1}
+\underbrace{\int_0^t\int_{\Om_1}
	\left(\rho_fh\prt_t^2\hat u\cdot G-\frac{\rho_f}{2}\prt_th|\prt_t\hat u|^2\right)\m\dd\bz\dd t}_{:=R_3-R_1}. $$

For the terms on $A_h$, we derive by Gauss theorem using the boundary conditions of $\hat u$ and $G$; in particular the fact that $(\partial_t \hat u + G)(t,x,1)=\partial_t^2h{\rm e_2}$ (as $G(t,x,1)=0$ 
since it only depends on $\hat u^1$, see \rfb{G}). We find that
\begin{align*}
&-\mu\int_0^t\int_{\Om_1}\div((\prt_tA_h\nabla)\hat u)\cdot(\prt_t
\hat u+G)\m\dd\bz\dd t\\
&=-\underbrace{\mu\int_0^t\int_0^L\left(((\prt_tA_h\nabla)\hat u)(t,x,1)\m\prt_t^2
	h{\rm e_2}\right)\cdot{\rm e_2}\m\dd x\dd t}_{:=(*)} +\underbrace{\mu\int_0^t\int_{\Om_1}((\prt_t
	A_h\nabla)\hat u):(\nabla\prt_t\hat u+\nabla G)\m\dd \bz\dd t}_{:=R_4},
\end{align*} 
and 
\begin{align*}
&-\mu\int_0^t\int_{\Om_1}\div((A_h\nabla)\prt_t\hat u)\cdot(\prt_t
\hat u+G)\m\dd\bz\dd t=-\underbrace{\mu\int_0^t\int_0^L\left(((A_h\nabla)\prt_t\hat u)(t,x,1)\m
	\prt_t^2h{\rm e_2}\right)\cdot{\rm e_2}\m\dd x\dd t}_{:=(**)}\\
&\quad +\underbrace{\mu\int_0^t\int_{\Om_1}((A_h\nabla)\prt_t\hat u):\nabla\prt_t\hat 
	u\m\dd \bz\dd t}_{:=L_2}
+\underbrace{\mu\int_0^t\int_{\Om_1}((A_h\nabla)\prt_t\hat u):\nabla G\m\dd\bz\dd t}_{:=R_5}.
\end{align*}

For the two pressure terms in \rfb{firstineq}, using integration 
by parts for space we arrive, using $\div(B_h^\intercal(\prt_t\hat u+G))=0$ and $\hat 
u(t,x,1)=\prt_th{\rm e_2}$,  at
\begin{equation*}
\int_0^t\int_{\Om_1} B_h \nabla\prt_t\hat p\cdot (\prt_t\hat u+G)\m\dd\bz\dd t
=\underbrace{\int_0^t\int_0^L\left((\prt_t\hat p\m B_h)(t,x,1)\m\prt_t^2h{\rm e_2}\right)
	\cdot{\rm e_2}\m\dd x\dd t}_{:=(***)}.
\end{equation*}
Moreover, integration by parts implies (using the structure of $G$)
\begin{align}\label{pressdeal}
&\int_0^t\int_{\Om_1}\m\prt_t B_h \nabla\hat p \cdot(\prt_t\hat u+G)\m\dd \bz\dd t\nonumber\\
&=
\underbrace{\int_0^t\int_0^L\left((\hat p\m\prt_t B_h)(t,x,1)\m\prt_t^2h{\rm e_2}\right)
	\cdot{\rm e_2}\m\dd x\dd t}_{=0}
-\int_0^t\int_{\Om_1}\hat p\m\div(\prt_t B_h^\intercal(\prt_t\hat u+G))\m\dd\bz\dd t.
\end{align} 
Note that $\div\left(\prt_t B_h^\intercal(\prt_t\hat u+G)\right)\neq 0$, 
we thus need to estimate the second term on the right hand side 
of \rfb{pressdeal}. Multiplying the momentum equation of the fluid by $\varphi$ introduced in \rfb{findvarphi}, we find using integration by parts and Lemma \ref{pressurelemma}, that
\begin{align*}
\int_0^t\int_{\Om_1}\hat p\m\div\left(\prt_t B_h^\intercal(\prt_t\hat 
u+G)\right)\m\dd\bz\dd t&=\underbrace{\int_0^t\int_{\Om_1}\rho_fh\m\prt_t\hat u\cdot\varphi\m\dd\bz\dd t}_{:=R_6}
+\underbrace{\int_0^t\int_{\Om_1}\rho_f(\hat u-\prt_t\chi_h)\cdot(B_h\nabla)\hat 
	u\cdot\varphi\m\dd\bz\dd t}_{:=R_7}\nonumber\\
&\quad+\underbrace{\int_0^t\int_{\Om_1}\mu (A_h\nabla)\hat u:\nabla\varphi\m\dd\bz\dd t}_{:=R_8}.
\end{align*}

We combine the above by taking the time-derivative of the beam equation \rfb{beamnew},
which gives that for every $(t, x)\in (0, T)\times (0, L)$,
\begin{align*}
\rho_s\prt_t^3h-\beta\prt_t\prt_x^2h+\alpha\prt_t\prt_x^4h=-{\rm e_2}\cdot
\left\{\mu(\prt_tA_h\nabla)\hat u+\mu (A_h\nabla)\prt_t\hat u-
\prt_t\hat pB_h-\hat p\prt_tB_h\right\}(t,x,1){\rm e_2}.
\end{align*}
Taking the inner product of the above equation with $\prt_t^2h$,
we have (using that $ {\rm e_2}\cdot \partial_t B_h {\rm e_2}=0$)
\begin{align}\label{beamtimeder}
\int_0^t\int_0^L\left\{\rho_s\prt_t^3h-\beta\prt_t\prt_x^2h+\alpha
\prt_t\prt_x^4h\right.\left.+\m{\rm e_2}\cdot\left(\mu (\prt_tA_h\nabla)\hat u+\mu (A_h\nabla)\prt_t
\hat u-\prt_t\hat p B_h\right)(t,x,1)\m{\rm e_2}\right\}
\prt_t^2h\m\dd x\dd t=0.
\end{align}
By taking intergration by parts and using the boundary conditions, 
we obtain that
\begin{align*}
&\int_0^t\int_0^L\rho_s\prt_t^3h\prt_t^2h\m\dd x\dd t=\frac{\rho_s}{2}\int_0^t\int_0^L
\frac{\dd}{\dd t}\left|\prt_t^2h\right|^2\m\dd x\dd t, \\
&-\int_0^t\int_0^L\beta\prt_t\prt_x^2h\m\prt_t^2h\m\dd x\dd t=\int_0^t\int_0^L
\beta\prt_t\prt_xh\m\prt_t^2\prt_xh\m\dd x\dd t=\frac{\beta}{2}\int_0^t\int_0^L
\frac{\dd}{\dd t}\left|\prt_t\prt_xh\right|^2\m\dd x\dd t, \\
&\int_0^t\int_0^L\alpha\prt_t\prt_x^4h\m\prt_t^2h\m\dd x\dd t=\alpha\int_0^t
\int_0^L\prt_t\prt_x^2h\m\prt_t^2\prt_x^2h\m\dd x\dd t=\frac{\alpha}{2}
\int_0^t\int_0^L\frac{\dd }{\dd t}\left|\prt_t\prt_x^2h\right|^2\m\dd x\dd t.
\end{align*}
We add the equations \rfb{firstineq} 
and \rfb{beamtimeder} together. Note that this implies that the terms $(*),(**)$ and $(***)$ cancel with the beam equation. Hence we find, for every 
$t\in(0, T)$, that
\begin{align*}
&\frac{\rho_s}{2}\int_0^L\left|(\prt_t^2h)(t)
\right|^2\m\dd x+\frac{\beta}{2}\int_0^L\left|(\prt_t\prt_xh)(t)\right|^2\m\dd x+
\frac{\alpha}{2}\int_0^L\left|(\prt_t\prt_x^2h)(t)\right|^2\m\dx
\nonumber\\
&\qquad +\underbrace{\frac{\rho_f}{2}\int_{\Om_1}h(t)|\prt_t\hat u(t)|^2\m\dd\bz}_{=L_1}
+\underbrace{\mu\int_0^t\int_{\Om_1}
	(A_h\nabla)\prt_t\hat u:\nabla\prt_t\hat u\m\dd\bz\dd t}_{=L_2}
\end{align*}
\begin{align}\label{sum}
&=-\underbrace{\int_0^t\int_{\Om_1}\frac{\rho_f}{2}\prt_th\left|\prt_t\hat u
	\right|^2\m\dd\bz\dd t}_{=R_1}-\underbrace{\int_0^t\int_{\Om_1}\rho_f\prt_th\prt_t\hat u\cdot G\m\dd\bz\dd t}_{=R_2}-
\underbrace{\int_0^t\int_{\Om_1}\rho_fh\prt_t^2\hat u\cdot G\m\dd\bz\dd t}_{=R_3}\nonumber\\
&\quad -\underbrace{\mu\int_0^t\int_{\Om_1}\left((\prt_t A_h\nabla)\hat u\right):
	\left(\nabla\prt_t\hat u+\nabla G\right)\m\dd\bz\dd t}_{=R_4}-\underbrace{\mu\int_0^t\int_{\Om_1}
	(A_h\nabla)\prt_t\hat u:\nabla G\m\dd\bz\dd t}_{=R_5}
\nonumber\\
&\quad+\underbrace{\int_0^t\int_{\Om_1}\rho_fh\prt_t\hat u\cdot\varphi\m\dd\bz\dd t}_{=R_6}
+
\underbrace{\int_0^t
	\int_{\Om_1}\rho_f(\hat u-\prt_t\chi_h)\cdot(B_h\nabla)\hat u
	\cdot \varphi\m\dd\bz\dd t}_{=R_7}
+\underbrace{\mu\int_0^t\int_{\Om_1}(A_h\nabla)\hat u:\nabla\varphi\m\dd\bz\dd t}_{=R_8}\nonumber\\
&\quad -\underbrace{\int_0^t\int_{\Om_1}\prt_t\left(\rho_f(\hat u-\prt_t\chi_h)
	\cdot(B_h\nabla)\hat u\right)\cdot(\prt_t\hat u+G)\m\dd\bz\dd t}_{:=R_9}\nonumber\\
&\quad+\frac{\rho_s}{2}\int_{\Om_1}h_0\left|(\prt_t\hat u)(0)\right|^2\m\dd \bz+
\frac{\rho_s}{2}\int_0^L|(\prt_t^2h)(0)|^2\m\dd x+
\frac{\beta}{2}\int_0^L|\prt_xh_1|^2\m\dx+\frac{\alpha}{2}\int_0^L
|\prt_x^2h_1|^2\m\dx,
\end{align}
where $\varphi$ and $G$ has been introduced in \rfb{findvarphi} and \rfb{G}, respectively. Please note that $R_9$ is a term that is left from \eqref{firstineq}.
Under the no-contact assumption $\min_{(t, x)\in (0, T)\times (0, L)}h(t,x)>h_{\min}$,
together with the energy estimate \rfb{energyhat}, 
we notice that there exists $c_1$, $c_2>0$ such that 
$$c_1\mathbb{I}_{2\times 2}\leq A_h(t,x)\leq c_2\mathbb{I}_{2\times 2} \quad\text{ for all }(t, x)\in(0, T)\times 
\Om_1. $$
Hence, if the left hand side of \eqref{sum} is bounded it implies the same bounds for the left hand side of \eqref{formal-est}.
Moreover, together with the continuous embedding $H^1_t\hookrightarrow 
L^\infty_t$ we can additionally put $\lVert\nabla\hat u
\rVert_{L^\infty(0, T; L^2(\Om_1))}^2$ on the left side of \rfb{sum}. 

\subsection{Estimating the right hand side of \rfb{sum}}
In the remaining part, we focus on the estimate of the right 
hand side of \rfb{sum}.
To present the proof clearly, we divide it into four steps below. In the following we use without further notice that $\partial_xh,h$ and $\frac{1}{h}$ are uniformly bounded in space time.

{\bf Step 1. The estimate of the first three terms on the right side 
	of \rfb{sum}.} 
For the first term on the right side of \rfb{sum}, we have
\begin{equation}\label{first}
\begin{aligned}
\abs{R_1}&\lesssim\int_0^t\int_0^1 \lVert\prt_th\rVert_{L_x^2}\lVert\prt_t\hat 
u\rVert_{L^4_x}^2\dd z\dd t\\
&\lesssim \int_0^t\int_0^1\lVert\prt_th\rVert_{L^2_x}\lVert\prt_t\hat 
u\rVert_{L^2_x}^{\frac{3}{2}}\lVert\prt_t\prt_x\hat u\rVert_{L^2_x}
^{\frac{1}{2}}\m\dd z\dd t\\
&\lesssim \lVert\prt_th\rVert_{L^\infty_t(L^2_x)}\left(\int_0^t\int_0
^1\left(\eps\lVert\prt_t\prt_x\hat u\rVert_{L^2_x}^2+C(\eps)\lVert\prt_t\hat 
u\rVert_{L^2_x}^2\right)\right)\m\dd z\dd t\\
&\leq \eps\int_0^t\lVert\prt_t\prt_x\hat u\rVert_{L^2_\bz}
^2\dd t+C(\eps)\int_0^t\lVert \prt_t\hat u\rVert_{L^2_\bz}^2\dd t,
\end{aligned}
\end{equation}
where we used the Young's inequality and the boundedness of $\prt_th$ 
in $L^\infty_t(L^2_x)$ (see \rfb{energyhat}), as well as the following 
interpolation inequality:
$$\lVert\prt_t\hat u\rVert_{L^4_x}\lesssim\lVert\prt_t\hat u\rVert_{L^2_x}
^{\frac{3}{4}}\lVert\prt_t\prt_x\hat u\rVert_{L^2_x}^{\frac{1}{4}}
\quad\text{ for all }(t, x)\in(0, T)\times(0, L). $$

For $R_2$ we use the formula 
of $G$ in \rfb{G} and derive that
\begin{equation}\label{2nd+}
R_2
=\int_0^t\int_{\Om_1}\left(\frac{1}{h}\left|\prt_th\right|^2\hat u^1\prt_t
\hat u^1
+\frac{z}{h}|\prt_th|^2\prt_xh\m\hat u^1\prt_t\hat u^2-\prt_th\m
\prt_t\prt_xh z\hat u^1\prt_t\hat u^2\right)\m\dd\bz\dd t.
\end{equation}
Observe that we need 
to estimate the first and the last integrals on the right side of 
\rfb{2nd+}. Firstly,
\begin{equation}\label{2nd1}
\begin{aligned}
\int_0^t\int_{\Om_1}\frac{1}{h}\left|\prt_th\right|^2\hat u^1\prt_t
\hat u^1\m\dd\bz\dd t&\lesssim\int_0^t\lVert\prt_th\rVert_{L^\infty_x}^2\lVert\hat u\rVert_
{L^2_\bz}\lVert\prt_t\hat u\rVert_{L^2_\bz}\m\dd t\\
&\lesssim\int_0^t\lVert\prt_t h\rVert_{L^2_x}\lVert\prt_t\prt_xh\rVert_
{L^2_x}\lVert\hat u\rVert_{L^2_\bz}\lVert\prt_t\hat u\rVert_{L^2
	_\bz}\m\dd t\\
&\lesssim\lVert\prt_th\rVert_{L^\infty_t(L^2_x)}\lVert\hat u\rVert_{L^\infty_t
	(L^2_\bz)}\int_0^t\lVert\prt_t\prt_xh\lVert_{L^2_x}
\lVert\prt_t\hat u\rVert_{L^2_\bz}\m\dd t\\
&\lesssim\int_0^t\left(\lVert\prt_t\prt_xh\rVert_{L^2_x}^2+\lVert\prt_t
\hat u\rVert_{L^2_\bz}^2\right)\m\dd t,
\end{aligned}
\end{equation}
where we used the interpolation inequality as follows:
\begin{equation}\label{ineqinfty}
\lVert\prt_th\rVert_{L^\infty_x}\lesssim \lVert\prt_th\rVert_{L^2_x}
^{\frac{1}{2}}\lVert\prt_t\prt_xh\rVert_{L^2_x}^{\frac{1}{2}}
\quad\text{ for all }(t,x)\in(0, T)\times (0, L).
\end{equation} 
This 1D interpolation \rfb{ineqinfty} we will use frequently below and please see \cite[Theorem 5.9]{adams2003sobolev} or \cite{brezis2018gagliardo} for reference.
Then we estimate the last integral in \rfb{2nd+}:
\begin{equation}\label{2nd2}
\begin{aligned}
\int_0^t\int_{\Om_1}\prt_th\m\prt_t\prt_xh\m\hat u^1\prt_t\hat u^2\m\dd\bz\dd t
&\leq \int_0^t\lVert\prt_th\rVert_{L^\infty_x}\lVert\prt_t\prt_xh\rVert
_{L^\infty_x}\lVert\hat u\rVert_{L^2_\bz}\lVert\prt_t\hat u\rVert_
{L^2_\bz}\m\dd t\\
&\lesssim\int_0^t\lVert\prt_th\rVert_{L^2_x}^{\frac{1}{2}}\lVert\prt_t
\prt_xh\rVert_{L^2_x}\lVert\prt_t\prt_x^2h\rVert_{L^2_x}^{\frac{1}{2}}
\lVert\hat u\rVert_{L^2_\bz}\lVert\prt_t\hat u\rVert_{L^2_\bz}\m\dd t\\
&\lesssim \int_0^t\lVert\prt_th\rVert_{L^2_x}\lVert\prt_t\prt_x^2h
\rVert_{L^2_x}\lVert\hat u\rVert_{L^2_\bz}\lVert\prt_t\hat u
\rVert_{L^2_\bz}\m\dd t\\
&\lesssim\int_0^t\left(\lVert\prt_t\hat u\rVert_{L^2_\bz}^2+\lVert
\prt_t\prt_x^2h\rVert^2_{L^2_x}\right)\m\dd t.
\end{aligned}
\end{equation}
In the above estimate, we used in particular the interpolation
inequality:
$$\lVert\prt_t\prt_xh\rVert_{L^2_x}\leq C\lVert\prt_th\rVert_{L^2_x}
^{\frac{1}{2}}\lVert\prt_t\prt_x^2h\rVert_{L^2_x}^{\frac{1}{2}}.  $$
Combining with the estimate \rfb{2nd1} and \rfb{2nd2}, we thereby 
obtain the estimate for \rfb{2nd+} as follows: 
\begin{equation*}\label{2ndf}
\abs{R_2}\lesssim\int_0^t
\left(\lVert\prt_t\hat u\rVert_{L^2_\bz}^2+\lVert\prt_t\prt_xh
\rVert_{L^2_x}^2+\lVert\prt_t\prt_x^2h\rVert^2_{L^2_x}\right)\m\dd t.
\end{equation*}

To estimate $R_3$ we 
first take an integration by parts with respect to the time variable, 
which gives that
\begin{equation}\label{3rd1}
\begin{aligned}
R_3
&=\int_{\Om_1}\rho_fh\prt_t\hat u\cdot G\m\dd\bz\dd t-\int_0^t\int_{\Om_1}\rho_f
\prt_th\prt_t\hat u(t)\cdot G(t)\m\dd\bz\dd t\\
&\quad-\int_0^t\int_{\Om_1}\rho_fh\prt_t\hat u\cdot\prt_t G\m\dd\bz\dd t
-\int_{\Om_1}\rho_fh_0(\prt_t\hat u)(0)\cdot G(0)\m\dd \bz.
\end{aligned}
\end{equation}
Using the expression of $G$ in \rfb{G}, we start to estimate the right 
side of \rfb{3rd1} by using interpolation inequalities. We first have
\begin{equation}\label{3right1}
\int_{\Om_1}\rho_fh\prt_t\hat u\cdot G\m\dd\bz=\int_{\Om_1}\rho_f\left(\prt_th
\hat u^1\prt_t\hat u^1+z\prt_th\prt_xh\hat u^1\prt_t\hat u^2-zh\prt_t
\prt_xh\hat u^1\prt_t\hat u^2\right)\dd\bz.
\end{equation}
and then
\begin{equation}\label{3sub1}
\begin{aligned}
\int_{\Om_1}\rho_f\prt_th\hat u^1\prt_t\hat u^1\m\dd\bz
&\lesssim\lVert\prt_th
\rVert_{L^\infty_x}\lVert\hat u\rVert_{L^2_\bz}\lVert\prt_t\hat u
\rVert_{L^2_\bz}\\
&\lesssim \lVert\prt_th\rVert_{L^2_x}^{\frac{3}{4}}\lVert\prt_t\prt_xh
\rVert_{L^2_x}^{\frac{1}{4}}\lVert\prt_t\hat u\rVert_{L^2_\bz}\\
&\leq \eps\lVert\prt_t\hat u\rVert_{L^2_\bz}^2+C(\eps)\lVert\prt_th
\rVert_{L^2_x}^{\frac{3}{2}}\lVert\prt_t\prt_xh\rVert_{L^2_x}^{
	\frac{1}{2}}\\
&\leq \eps\lVert\prt_t\hat u\rVert_{L^\infty_t(L^2_\bz)}^2+\delta\lVert\prt_t
\prt_xh\rVert_{L^\infty_t(L^2_x)}^2+C(\eps, \delta)\lVert\prt_th\rVert_{L^\infty_t(L^2_x)}^2,
\end{aligned}
\end{equation}
where we used twice Young's inequality. The second term on the right 
side of \rfb{3right1} can be estimated in a similar way as \rfb{3sub1}. 
For the last term in \rfb{3right1}, we also have
\begin{equation}\label{3sub2}
\begin{aligned}
\int_{\Om_1}\rho_f z h\prt_t\prt_xh\hat u^1\prt_t\hat u^2\m\dd \bz
&\lesssim
\lVert\prt_t\prt_xh\rVert_{L^\infty_x}\lVert\hat u\rVert_{L^2_\bz}
\lVert\prt_t\hat u\rVert_{L^2_\bz}\\
&\lesssim\lVert\prt_th\rVert_{L^2_x}^{\frac{1}{4}}\lVert\prt_t
\prt_x^2h\rVert_{L^2_x}^{\frac{3}{4}}\lVert\prt_t\hat u\rVert_
{L^2_\bz}\\
&\leq \eps\lVert\prt_t\hat u\rVert_{L^2_\bz}^2+C(\eps)\lVert\prt_th
\rVert_{L^2_x}^{\frac{1}{2}}\lVert\prt_t\prt_x^2h\rVert_{L^2_x}
^{\frac{3}{2}}\\
&\leq \eps\lVert\prt_t\hat u\rVert_{L^\infty_t( L^2_\bz)}^2+\delta\lVert\prt_t
\prt_x^2h\rVert_{L^\infty_t(L^2_x)}^2+C(\eps, \delta)\lVert\prt_th\rVert_{L^\infty_t(L^2_x)}^2.
\end{aligned}
\end{equation}
Combining with the estimate \rfb{3sub1} and \rfb{3sub2}, we obtain 
that
\begin{equation}\label{3rd1f}
\int_{\Om_1}\rho_fh\prt_t\hat u\cdot G\m\dd\bz\leq \eps\lVert\prt_t\hat u\rVert_{L^\infty_t( L^2_\bz)}^2+\delta\lVert\prt_t
\prt_x^2h\rVert_{L^\infty_t(L^2_x)}^2+C(\eps, \delta)\lVert\prt_th\rVert_{L^\infty_t(L^2_x)}^2.
\end{equation}

The second term on the right of \rfb{3rd1} is the same with $R_2$ term, which has been estimated in \rfb{2nd+}. 
The third term on the right side of \rfb{3rd1} is
\begin{equation*}\label{3rd3rd}
\begin{aligned}
&\int_0^t\int_{\Om_1}\rho_fh\prt_t\hat u\cdot\prt_t G\m\dd\bz\dd t\\
&=\int_0^t\int
_{\Om_1}\left\{\rho_fh\prt_t\hat u^1\left(-\frac{1}{h^2}|\prt_th|^2\hat u^1
+\frac{1}{h}\prt_t^2h\hat u^1+\frac{1}{h}\prt_th\prt_t\hat u^1\right)\right.\\
&\quad +\rho_fh\prt_t\hat u^2\left(-\frac{z}{h^2}|\prt_th|^2\prt_xh\hat u^1
+\frac{z}{h}\prt_t^2h\prt_xh\hat u^1+\frac{z}{h}\prt_th\prt_t\prt_xh
\hat u^1\right.\\
&\quad +\left.\left.\frac{z}{h}\prt_th\prt_xh\prt_t\hat u^1-\prt_t^2\prt_xhz\hat 
u^1-\prt_t\prt_xhz\prt_t\hat u^1\right)\right\}\m\dd\bz\dd t.
\end{aligned}
\end{equation*}
In view of \eqref{first} and \eqref{2nd1} the terms left to be estimated are:
\begin{equation}\label{threetype}
\int_0^t\int_{\Om_1}\prt_t^2h\hat u^1\prt_t\hat u^1\m\dd\bz\dd t,\quad \int_0^t
\int_{\Om_1}\rho_fh\prt_t\hat u^2\prt_t^2\prt_xh\hat u^1\m\dd\bz\dd t,\quad \int_0^t
\int_{\Om_1}\rho_fzh\prt_t\prt_xh\prt_t\hat u^1\prt_t\hat u^2\m\dd\bz\dd t.
\end{equation}
Using interpolation inequality and Young's inequality, we obtain that
\begin{equation}\label{3rd3rdsam1}
\begin{aligned}
\int_0^t\int_{\Om_1}\prt_t^2h\hat u^1\prt_t\hat u^1\m\dd\bz\dd t
&\lesssim \int_0^t\lVert\prt_t^2h\rVert_{L^2_x}\int_0^1\lVert\hat u
\rVert_{L^2_x}^{\frac{1}{2}}\lVert\prt_x\hat u\rVert_{L^2_x}^{\frac{1}{2}}
\lVert\prt_t\hat u\rVert_{L^2_x}\m\dd z\dd t\\
&\lesssim \int_0^t\lVert\prt_t^2h\rVert_{L^2_x}\lVert\hat u\rVert_{L^2
	_\bz}^{\frac{1}{2}}\lVert\prt_x\hat u\rVert^{\frac{1}{2}}_{L^2
	_\bz}\lVert\prt_t\hat u\rVert_{L^2_\bz}\m\dd t\\
&\lesssim\int_0^t\lVert\prt_t^2h\rVert_{L^2_x}^2\m\dd t+\int_0^t\lVert\nabla\hat u\rVert_{L^2_\bz}^2\lVert\prt_t
\hat u\rVert_{L^2_\bz}^2\m\dd t.
\end{aligned}
\end{equation}
For the other two in \rfb{threetype}, we need to take the integration 
by parts with repsect to $x$, which gives that
\begin{equation}\label{3rd3rdsam2}
\int_0^t\int_{\Om_1}\rho_fh\prt_t\hat u^2\prt_t^2\prt_xh\hat u^1\m\dd\bz\dd t
=-\int_0^t\int_{\Om_1}\rho_f\prt_t^2h\left(\prt_xh\prt_t\hat u^2\hat 
u^1+h\prt_t\prt_x\hat u^2\hat u^1+h\prt_t\hat u^2\prt_x\hat u^1\right)\m\dd\bz\dd t.
\end{equation}
The first term on the right of \rfb{3rd3rdsam2} can be estimated as 
\rfb{3rd3rdsam1}. The second term on the right of \rfb{3rd3rdsam2} can 
be estimate as:
\begin{equation*}
\label{I}
\begin{aligned}
\int_0^t\int_{\Om_1}\rho_fh\prt_t^2h\prt_t\prt_x\hat u^2\hat u^1\m\dd\bz\dd t&\lesssim
\int_0^t\int_0^1\lVert\prt_t^2h\rVert_{L^2_x}\lVert\prt_t\prt_x\hat u
\rVert_{L^2_x}\lVert\hat u\rVert_{L^\infty_x}\dd z\dd t\\
&\lesssim\int_0^t\lVert\prt_t^2h\rVert_{L^2_x}\lVert\prt_t\prt_x\hat u
\rVert_{L^2_\bz}\lVert\hat u\rVert_{L^2_\bz}^{\frac{1}{2}}\lVert
\prt_x\hat u\rVert_{L^2_\bz}^{\frac{1}{2}}\m\dd t\\
&\leq \eps\int_0^t\lVert\prt_t\prt_x\hat u\rVert_{L^2_\bz}^2\m\dd t+C(\eps)\int_0^t
\lVert\nabla\hat u\rVert_{L^2_\bz}^2\lVert\prt_t^2h\rVert_{L^2_x}^2\m\dd t.
\end{aligned}
\end{equation*}
Similarly, we estimate the last term on the right side of \rfb{3rd3rdsam2}:
\begin{equation*}
\label{II}
\begin{aligned}
\int_0^t\int_{\Om_1}\rho_fh\prt_t^2h\prt_t\hat u^2\prt_x\hat u^1\m\dd\bz\dd t&\lesssim
\int_0^t\int_0^1\lVert\prt_t^2h\rVert_{L^2_x}\lVert\prt_x\hat u\rVert_
{L^2_x}\lVert\prt_t\hat u\rVert_{L^\infty_x}\m\dd z\dd t\\
&\lesssim  \int_0^t\lVert\prt_t^2h\rVert_{L^2_x}\lVert\prt_x\hat u\rVert_
{L^2_\bz}\lVert\prt_t\hat u\rVert_{L^2_\bz}^{\frac{1}{2}}\lVert
\prt_t\prt_x\hat u\rVert_{L^2_\bz}^{\frac{1}{2}}\m\dd t\\
&\leq \eps\int_0^t\lVert\prt_t\prt_x\hat u\rVert_{L^2_\bz}^2\m\dd t+C(\eps)\int_0^t
\lVert\prt_x\hat u\rVert_{L^2_\bz}^2\lVert\prt_t^2h\rVert_{L^2_x}^2\m\dd t.
\end{aligned}
\end{equation*}
We also use the integration by parts for the last integral in \rfb{threetype}:
\begin{equation}\label{now}
\int_0^t
\int_{\Om_1}\rho_fzh\prt_t\prt_xh\prt_t\hat u^1\prt_t\hat u^2\m\dd\bz\dd t
=-\int_0^t\int_{\Om_1}\rho_fz\prt_th\left(h\prt_t\prt_x\hat u^1\prt_t\hat 
u^2+h\prt_t\hat u^1\prt_t\prt_x\hat u^2+\prt_xh\prt_t\hat u^1\prt_t\hat 
u^2\right)\m\dd\bz\dd t.
\end{equation}
The critical term in \rfb{now} is estimated as follows:
\begin{equation}\label{needlater}
\begin{aligned}
\int_0^t\int_{\Om_1}\prt_th\prt_t\hat u\prt_t\prt_x\hat u\m\dd\bz\dd t&\leq \int_0^t
\int_0^1\lVert\prt_th\rVert_{L^2_x}\lVert\prt_t\hat u\rVert_{L^\infty_x}
\lVert\prt_t\prt_x\hat u\rVert_{L^2_x}\dd z\dd t\\
&\lesssim \int_0^t\lVert\prt_t\hat u\rVert_{L^2_\bz}^{\frac{1}{2}}\lVert
\prt_t\prt_x\hat u\rVert_{L^2_\bz}^{\frac{3}{2}}\dd t\\
&\leq \eps\int_0^t\lVert\prt_t\prt_x\hat u\rVert_{L^2_\bz}^2\m\dd t+C(\eps)\int_0^t
\lVert\prt_t\hat u\rVert_{L^2_\bz}^2\m\dd t.
\end{aligned}
\end{equation}

Combining with the estimate \rfb{3rd3rdsam1}--\rfb{now}, we have 
\begin{equation}\label{3rd3rdfin}
\begin{aligned}
\int_0^t\int_{\Om_1}\rho_fh\prt_t\hat u\cdot\prt_t G\m\dd \bz\dd t
&\leq \eps\int_0^t\lVert\prt_t\prt_x\hat u\rVert_{L^2_\bz}^2\dd t+C(\eps)\int_0^t
\left(\lVert\prt_t\hat u\rVert_{L^2_\bz}^2+\lVert\prt_t^2h\rVert_{L^2_x}^2
\right)\m\dd t\\
&\quad+C(\eps)\int_0^t(\lVert\prt_t
\hat u\rVert_{L^2_\bz}^2+\lVert\prt_t^2h\rVert_{L^2_x}^2)\lVert\nabla\hat 
u\rVert_{L^2_\bz}^2\dd t.
\end{aligned}
\end{equation}

Putting together the estimate \rfb{3rd1f}, \rfb{2nd+} and \rfb{3rd3rdfin}, 
we derive that 
\begin{equation*}\label{3rdfinal}
\begin{aligned}
\abs{R_3}
&\leq \eps\lVert\prt_t\hat u\rVert_{L^\infty_t(L^2_\bz)}+\delta\lVert\prt_t\prt_x^2h
\rVert_{L^\infty_t(L^2_x)}^2+\eps\int_0^t\lVert\prt_t\prt_x\hat u\rVert_{L^2_\bz}^2\m\dd t\\
&\quad+C(\eps, \delta)\int_0^t
\left(\lVert\prt_t\hat u\rVert_{L^2_\bz}^2+\lVert\prt_t^2h\rVert_{L^2_x}^2
+\lVert\prt_t\prt_xh\rVert_{L^2_x}^2+\lVert\prt_t\prt_x^2h\rVert^2_{L^2_x}
\right)\m\dd t\\
&\quad+C(\eps)\int_0^t\left(\lVert\prt_t
\hat u\rVert_{L^2_\bz}^2+\lVert\prt_t^2h\rVert_{L^2_x}^2\right)\lVert\nabla
\hat u\rVert_{L^2_\bz}^2\m\dd t\\
&\quad-\int_{\Om_1}\rho_fh_0(\prt_t\hat u)(0)\cdot G(0)\dd \bz.
\end{aligned}
\end{equation*}
The last integral with the initial data above can be handled in the following way:
\begin{equation*}
\begin{aligned}
\int_{\Om_1}\rho_fh_0(\prt_t\hat u)(0)\cdot G(0)\m\dd\bz&\lesssim
\int_{\Om_1}\left(h_1\hat u_0(\prt_t\hat u)(0)+h_1\prt_xh_0\hat u_0(\prt_t\hat u)(0)+h_0\prt_xh_1(\prt_t\hat u)(0) \right)\m\dd\bz\\
& \leq C(C_0, \lVert h_1\rVert_{H^2(0, L)})(C_0+\lVert(\prt_t\hat u)(0)\rVert_{L^2_\bz}^2),
\end{aligned}
\end{equation*} 
where we used the expression \rfb{3right1} and $C_0$ has been introduced in \rfb{energyhat}.

{\bf Step 2. The estimate of the two terms $R_4$ and $R_5$ that are related to $\nabla G$ in \rfb{sum}.}
Recalling the 
matrix $A_h$ and $G$ introduced in \rfb{Ah} and \rfb{G}, we compute $\prt_tA_h \nabla\hat u:(\nabla\prt_t\hat u+\nabla G)$.
With that we can list the terms appearing in $R_4$ below:
\begin{equation*}\label{needtoestim}
\begin{aligned}
&a_1:=\int_0^t\int_{\Om_1}|\prt_th||\prt_x\hat u^1||\prt_t\prt_x\hat u^1|\m\dd\bz\dd t, \quad 
&a_2:=\int_0^t
\int_{\Om_1}|\prt_th|^2|\hat u^1||\prt_x\hat u^1|\m\dd\bz\dd t,\\
&a_3:=\int_0^t\int_{\Om_1}|\prt_t
h||\prt_t\prt_xh||\hat u^1||\prt_x\hat u^1|\m\dd\bz\dd t,\quad
&a_4:=\int_0^t\int_{\Om_1}|\prt_th|^2|\prt_x\hat u^1|^2\m\dd\bz\dd t,\\
&a_5:=\int_0^t\int_{\Om_1} 
|\prt_t\prt_xh||\prt_z\hat u^2||\prt_t\prt_z\hat u^1|\m\dd\bz\dd t,\quad 
&a_6:=\int_0^t\int_{\Om_1}|\prt_t
\prt_xh|^2|\hat u^1||\prt_x\hat u^1|\m\dd\bz\dd t,
\\
&a_7:= \int_0^t\int_{\Om_1}|\prt_th||\prt_t\prt_xh||\prt_z\hat u^2||\prt_z\hat u^1|\m\dd\bz\dd t, \quad 
&a_8:=\int_0^t\int_{\Om_1}|\prt_th|^2|\prt_x^2h||\hat u^1||\prt_x\hat u^2|\m\dd\bz\dd t,\\
&a_9:=\int_0^t\int_{\Om_1}|\prt_th||\prt_t\prt_x^2h||\hat u^1||\prt_x\hat u^2|\m\dd\bz\dd t, \quad
&a_{10}:= \int_0^t\int_{\Om_1}|\prt_th||\prt_x^2h||\prt_t\prt_xh
||\hat u^1||\prt_x\hat u^1|\m\dd\bz\dd t,\\
&a_{11}:=
\int_0^t\int_{\Om_1}|\prt_t\prt_xh||\prt_t\prt_x^2h||\hat u^1||\prt_x\hat u^1|\m\dd\bz\dd t, \quad 
&a_{12}:=\int_0^t\int_{\Om_1}|\prt_t\prt_xh|^2|\prt_x\hat u^1|^2\m\dd\bz\dd t
\end{aligned}
\end{equation*}
We do select the most critical terms of above.
First $a_1$ is subcritical to $a_5$.
Next we find that $a_2$ and $a_8$ are subcritical to $a_{10}$. Further $a_3$, $a_4$, $a_6$ and $a_7$ are subcritical to $a_{12}$. Finally $a_9$ is subcritical to $a_{11}$. Hence we need to estimate
\begin{align*}
&A_5:=\int_0^t\int_{\Om_1} 
\abs{\prt_t\prt_xh}\abs{\nabla\hat u}\abs{\prt_t\nabla\hat u}\m\dd\bz\dd t,
&A_{10}:= \int_0^t\int_{\Om_1}\abs{\prt_th}\abs{\prt_x^2h}\abs{\prt_t\prt_xh}\abs{\hat u}\abs{\nabla\hat u}\m\dd\bz\dd t,
\\
&A_{11}:=
\int_0^t\int_{\Om_1}\abs{\prt_t\prt_xh}\abs{\prt_t\prt_x^2h}\abs{\hat u}\abs{\nabla\hat u}\m\dd\bz\dd t,\quad  
&A_{12}:=\int_0^t\int_{\Om_1}|\prt_t\prt_xh|^2\abs{\nabla\hat u}^2\m\dd\bz\dd t.
\end{align*}  
We begin with $A_5$ where we find that
\begin{equation*}\label{needtoestim5}
\begin{aligned}
\abs{A_5}&\leq 
\int_0^t\lVert\prt_t\prt_xh\rVert_{L^\infty_x}\lVert\nabla\hat u\rVert_{L^2
	_\bz}\lVert\prt_t\nabla\hat u\rVert_{L^2_\bz}\m\dd t\\
&\lesssim  \int_0^t\lVert\prt_t\prt_x^2h\rVert_{L^2_x}\lVert\nabla\hat u\rVert_{
	L^2_\bz}\lVert\prt_t\nabla\hat u\rVert_{L^2_\bz}\m\dd t\\
&\leq \eps\int_0^t\lVert\prt_t\nabla\hat u\rVert_{L^2_\bz}^2\m\dd t+C(\eps)\int_0^t
\lVert\prt_t\prt_x^2h\rVert_{L^2_x}^2\lVert\nabla\hat u\rVert_{L^2_\bz}^2\m\dd t.
\end{aligned}
\end{equation*}
Next we estimate using interpolation in $x$ and the energy bounds we find
\begin{align*}
\abs{A_{10}}&\leq \int_0^t\int_{\Om_1}\norm{\prt_th}_{L^\infty_x}\norm{\prt_t\prt_xh}_{L^\infty_x}\norm{\hat u}_{L^\infty_x}\norm{\nabla\hat u}_{L^2_x}\norm{\prt_x^2h}_{L^2_x}\m\dd\bz\dd t
\\
&\lesssim \int_0^t\int_{\Om_1}\norm{\prt_th}_{L^2_x}^\frac{1}{2}\norm{\prt_t\prt_x^2h}_{L^2_x}^\frac{1}{2}\norm{\prt_t\prt_xh}_{L^2_x}\norm{\hat u}_{L^2_x}^\frac{1}{2}\norm{\nabla\hat u}_{L^2_x}^\frac{3}{2}\m\dd\bz\dd t
\\
&\lesssim \int_0^t\int_{\Om_1}\norm{\prt_t\prt_x^2h}_{L^2_x}^\frac{3}{2}\norm{\nabla\hat u}_{L^2_x}^\frac{3}{2}\m\dd\bz\dd t
\\
&\lesssim \int_0^t \norm{\prt_t\prt_x^2h}_{L^2_x}^2\norm{\nabla\hat u}_{L^2_{\bz}}^2\dd t+C,
\end{align*}
which is suitable for a Gr\"onwall estimate.
Further we estimate similarly
\begin{align*}
\abs{A_{11}}&\leq
\int_0^t\int_{\Om_1}\norm{\prt_t\prt_xh}_{L^\infty_x}\norm{\prt_t\prt_x^2h}_{L^2_x}\norm{\hat u}_{L^\infty_x}\norm{\nabla\hat u}_{L^2_x}\m\dd\bz\dd t
\\
&\lesssim \int_0^t\int_{\Om_1}\norm{\prt_th}_{L^2_x}^\frac12\norm{\prt_t\prt_x^2h}_{L^2_x}^\frac{3}{2}\norm{\hat u}_{L^2_x}^\frac{1}{2}\norm{\nabla\hat u}_{L^2_x}^\frac{3}{2}\m\dd\bz\dd t\\
&\lesssim \int_0^t \norm{\prt_t\prt_x^2h}_{L^2_x}^2\norm{\nabla\hat u}_{L^2_{\bz}}^2\dd t+C.
\end{align*}
Finally we estimate
\begin{equation*}
\begin{aligned}
\abs{A_{12}}&\leq \int_0^t\lVert\prt_t\prt_xh
\rVert_{L^\infty_x}^2\lVert\nabla\hat u\rVert_{L^2_\bz}^2\m\dd t
\lesssim \int_0^t\lVert\prt_t\prt_x h\rVert_{L^2_x}\lVert\prt_t\prt_x^2h\rVert_{L^2_x}
\lVert\nabla\hat u\rVert_{L^2_\bz}^2\m\dd t\\
&\lesssim\int_0^t(\lVert\prt_t\prt_x^2h\rVert_{L^2_x}^2+\lVert\prt_t\prt_xh\rVert_{L^2_x}^2)\lVert\prt_x\hat u
\rVert_{L^2_\bz}^2\m\dd t.
\end{aligned}
\end{equation*}
%
Hence we conclude (applying Poincar\'e's inequality) that
\begin{equation*}\label{needtoestimfin}
\begin{aligned}
\abs{R_4}&=\int_0^t\int_{\Om_2}\prt_tA_h \nabla\hat u:(\nabla\prt_t\hat u+\nabla G)\m\dd\bz\dd t
\lesssim \int_0^t \norm{\prt_t\prt_x^2h}_{L^2_x}^2\norm{\nabla\hat u}_{L^2_{\bz}}^2\dd t+C,
\end{aligned}
\end{equation*}
is suitable for a Gr\"onwall estimate.

Now we continue estimating $R_5$ in \rfb{sum}. 
It can be easily checked that the terms appearing that have not been treated before are
\begin{equation*}\label{gradGterms}
\begin{aligned}
&b_1:=\int_0^t\int_{\Om_1}|\prt_th||\prt_xh||\hat u||\prt_t\prt_x\hat u|\m\dd\bz\dd t, \quad 
b_2:=\int_0^t\int_{\Om_1}|\prt_t\prt_xh||\hat u||\prt_t\prt_x\hat u|\m\dd\bz\dd t,\\
&b_3:=\int_0^t\int_{\Om_1}|\prt_th||\prt_x^2h||\hat u||\prt_t\prt_x\hat u|\m\dd\bz\dd t, \quad 
b_4:=\int_0^t\int_{\Om_1}|\prt_t\prt_x^2h||\hat u||\prt_t\prt_x\hat u|\m\dd \bz\dd t,
\end{aligned}
\end{equation*}
where critical are only $b_3$ and $b_4$.
%
We find
\begin{equation*}
\begin{aligned}
\abs{b_3}&\leq 
\int_0^t\int_0^1\lVert\prt_th\rVert_{L^\infty_x}\lVert\prt_x^2h\rVert_
{L^2_x}\lVert\hat u\rVert_{L^\infty_x}\lVert\prt_t\prt_x\hat u\rVert_{L^2_x}\m\dd z\dd t\\
&\lesssim\int_0^t\lVert\prt_t\prt_xh\rVert_{L^2_x}^{\frac{1}{2}}\lVert
\prt_x\hat u\rVert_{L^2_\bz}^{\frac{1}{2}}\lVert\prt_t\prt_x\hat u
\rVert_{L^2_\bz}\m\dd t\\
&\leq \eps\int_0^t\lVert\prt_t\prt_x\hat u\rVert_{L^2_\bz}^2\m\dd t+C(\eps)
\int_0^t\left(\lVert\nabla\hat u\rVert_{L^2_\bz}^2+\lVert\prt_t
\prt_xh\rVert_{L^2_x}^2\right)\m\dd t,
\end{aligned}
\end{equation*}
and
\begin{equation*}
\begin{aligned}
\abs{b_4}\leq \eps
\int_0^t\lVert\prt_t\prt_x\hat u\rVert_{L^2_\bz}^2\m\dd t+C(\eps)\int_0^t\lVert
\prt_t\prt_x^2h\rVert_{L^2_x}^2(1+\lVert\nabla\hat u\rVert_{L^2_\bz}^2)\m\dd t.
\end{aligned}
\end{equation*}

Therefore, we have the estimate:
\begin{equation*}\label{2ndnablaGfin}
\begin{aligned}
\int_0^t\int_{\Om_1}(A_h\nabla)\prt_t\hat u:\nabla G \m\dd\bz\dd t&\leq  \eps\int_0^t
\lVert\prt_t\prt_x\hat u\rVert_{L^2_\bz}^2\m\dd t+C(\eps)\int_0^t\lVert\prt_t
\prt_x^2h\rVert_{L^2_x}^2\lVert\nabla\hat u\rVert_{L^2_\bz}^2\m\dd t\\
&\quad+C(\eps)\int_0^t\left(\lVert\prt_t\prt_xh\rVert_{L^2_x}^2+\lVert\prt_t
\prt_x^2h\rVert_{L^2_x}^2+\lVert\nabla\hat u\rVert_{L^2_\bz}^2\right)\m\dd t.
\end{aligned}
\end{equation*}

{\bf Step 3. The estimate of the terms $R_6$, $R_7$ and $R_8$ depending on $\varphi$ in \rfb{sum}.}
Recall that $\varphi$ has been 
introduced in \rfb{findvarphi}. We first notice that
\begin{equation*}
\begin{aligned}
\rho_fh\prt_t\hat u\cdot\varphi&=\rho_f\prt_th|\prt_t\hat u^1|^2+\frac{
	\rho_f}{h}|\prt_th|^2\prt_t\hat u^1\hat u^1+\rho_fz\prt_th\prt_xh
\prt_t\hat u^1\prt_t\hat u^2\\
&\quad+\frac{\rho_fz}{h}|\prt_th|^2\prt_xh\hat u^1\prt_t\hat u^2-\rho_fhz
\prt_t\prt_xh\prt_t\hat u^1\prt_t\hat u^2-\rho_fz\prt_th\prt_t\prt_xh
\prt_t\hat u^2\hat u^1.
\end{aligned}
\end{equation*}
Compared with the terms we have estimated before, here we only need to consider the following integral
\begin{equation}\label{integrpartn}
\begin{aligned}
\int_0^t\int_{\Om_1}\rho_fhz\prt_t\prt_xh\prt_t\hat u^1\prt_t\hat u^2\m\dd\bz\dd t&=
-\int_0^t\int_{\Om_1}\rho_fz\prt_th\prt_xh\prt_t\hat u^1\prt_t\hat u^2\m\dd\bz\dd t\\
&\quad -\int_0^t\int_{\Om_1}\rho_fhz\prt_th\prt_t\prt_x\hat u^1
\prt_t\hat u^2\m\dd\bz\dd t
-\int_0^t\int_{\Om_1}\rho_fhz\prt_th\prt_t\hat u^1\prt_t
\prt_x\hat u^2\m\dd \bz\dd t,
\end{aligned}
\end{equation}
where we used the integration by parts for the space variable $x$. The 
first term on the right side of \rfb{integrpartn} has been estimated in 
\rfb{first}. Note that the other two terms on the right of \rfb{integrpartn} 
have the same structure and actually also have been considered in 
\rfb{needlater}. Thus, we have \vspace{-2mm}
\begin{equation*}\label{firstvarphifin}
\abs{R_6}\leq \eps\int_0^t
\lVert\prt_t\prt_x\hat u\rVert_{L^2_\bz}
^2\m\dd t+C(\eps)\int_0^t\lVert \prt_t\hat u\rVert_{L^2_\bz}^2\m\dd t.
\end{equation*}
%

For $R_7$, using the structure of the matrix $\chi_h$ and $B_h$ introduced in \rfb{Ah} and $\varphi$ 
in \rfb{findvarphi}, respectively, we observe that
the terms appearing 
are: \vspace{-1mm}
\begin{equation}\label{varphisecondlist}
\begin{aligned}
&c_1:=\int_0^t\int_{\Om_1}|\prt_th||\hat u||\prt_t\hat u||\prt_x\hat u|\m\dd \bz\dd t, \quad &c_2:=\int_0^t
\int_{\Om_1}|\prt_th|^2|\hat u|^2|\prt_x\hat u|\m\dd\bz\dd t,\\
& c_3:=\int_0^t\int_{\Om_1}
|\prt_th|^2|\prt_t\hat u||\prt_x\hat u|\m\dd\bz\dd t,
\quad &c_4:=\int_0^t\int_{\Om_1}|\prt_th|^3|\hat u||\prt_x\hat u|\m\dd\bz\dd t,\\
& c_5:=\int_0^t
\int_{\Om_1}|\prt_t\prt_xh||\hat u||\prt_t\hat u||\prt_z\hat u|\m\dd\bz\dd t, \quad &c_6:=\int_0^t
\int_{\Om_1}|\prt_th||\prt_t\prt_xh||\hat u|^2|\prt_z\hat u|\m\dd\bz\dd t, \\
&c_7:=\int_0^t\int_{\Om_1}|\prt_th||\prt_t\prt_xh||\prt_t\hat u||\prt_z\hat u|\m\dd\bz\dd t, \quad 
&c_8:=\int_0^t\int_{\Om_1}|\prt_th|^2|\prt_t\prt_xh||\hat u||\prt_z\hat u|\m\dd\bz\dd t.
\end{aligned}
\end{equation}
Now, $c_1$ is subcritical to $c_5$, $c_2$ to $c_6$, $c_3$ to $c_7$ and $c_4$ to $c_8$.
We estimate using again the interpolation 
inequality in 1D for $L^\infty$ as in \rfb{ineqinfty} but also, 
the interpolation inequality in 2D for $L^4$, also known as Ladyzhenskaya's estimate
$$\lVert\hat u\rVert_{L^4_\bz}\lesssim\lVert\hat u\rVert_{L^2_\bz}
^{\frac{1}{2}}\lVert\nabla\hat u\rVert_{L^2_\bz}^{\frac{1}{2}} 	
\quad\text{ for all }(x,z)\in\Om_1.$$
With that we find
\begin{equation*}
\begin{aligned}
c_5
&\leq \int_0^t\lVert\prt_t\prt_xh\rVert_{L^\infty_x}\lVert\prt_z
\hat u\rVert_{L^2_\bz}\lVert\hat u\rVert_{L^4_\bz}\lVert
\prt_t\hat u\rVert_{L^4_\bz}\m\dd t\\
&\lesssim\int_0^t\lVert\prt_t\prt_x^2h\rVert_{L^2_x}\lVert\nabla
\hat u\rVert_{L^2_\bz}^{\frac{3}{2}}\lVert\prt_t\hat u
\rVert_{L^2_\bz}^{\frac{1}{2}}\lVert\nabla\prt_t\hat u
\rVert_{L^2_\bz}^{\frac{1}{2}}\m\dd t\\
&\leq \eps\int_0^t\lVert\nabla\prt_t\hat u\rVert_{L^2_\bz}^2\m\dd t
+\int_0^t\lVert\prt_t\hat u\rVert_{L^2_\bz}^2\lVert\nabla
\hat u\rVert_{L^2_\bz}^2\m\dd t+C(\eps)\int_0^t\lVert\prt_t\prt_x^2h
\rVert_{L^2_x}^2\lVert\nabla\hat u\rVert_{L^2_\bz}^2\m\dd t,
\end{aligned}
\end{equation*}
where in the end we used Young's inequality (with three terms). 
For $c_6$ in \rfb{varphisecondlist}, we have using the same interpolations 
\begin{equation*}
\begin{aligned}
c_6
&\leq \int_0^t\lVert\prt_th\rVert_{L^\infty_x}\lVert\prt_t\prt_xh
\rVert_{L^\infty_x}\lVert\prt_z\hat u\rVert_{L^2_\bz}\lVert\hat u
\rVert_{L^4_\bz}^2\m\dd t
\lesssim\int_0^t(1+\lVert\prt_t\prt_x^2h\rVert_{L^2_x}^2)
\lVert\nabla\hat u\rVert_{L^2_\bz}^2\m\dd t.
\end{aligned}
\end{equation*}
Further \vspace{-1mm}
\begin{align*}
c_7
&\leq \int_0^t\int_0^1\lVert\prt_t\prt_xh\rVert_{L^\infty_x}\lVert
\prt_th\rVert_{L^4_x}\lVert\prt_t\hat u\rVert_{L^4_x}\lVert\prt_z
\hat u\rVert_{L^2_x}\m\dd z\dd t \\
&\lesssim\int_0^t\int_0^1\lVert\prt_t\prt_xh\rVert_{L^2_x}^{\frac{3}{4}}
\lVert\prt_t\prt_x^2h\rVert_{L^2_x}^{\frac{1}{2}}\lVert\prt_th\rVert_
{L^2_x}^{\frac{3}{4}}\lVert\prt_t\hat u\rVert_{L^2_x}^{\frac{3}{4}}\lVert
\prt_t\prt_x\hat u\rVert_{L^2_x}^{\frac{1}{4}}\lVert\prt_z\hat u\rVert
_{L^2_x}\m\dd z\dd t\\
&\leq \eps\int_0^t\lVert\prt_t\prt_x\hat u\rVert_{L^2_\bz}^2\m\dd t+C(\eps)\int_0^t
\lVert\prt_t\prt_xh\rVert_{L^2_x}^{\frac{6}{7}}\lVert\prt_t\prt_x^2h
\rVert_{L^2_x}^{\frac{4}{7}}\lVert\prt_t\hat u\rVert_{L^2_\bz}
^{\frac{6}{7}}\lVert\prt_z\hat u\rVert_{L^2_\bz}^{\frac{8}{7}}\m\dd t\\
&\leq \eps\int_0^t\lVert\prt_t\prt_x\hat u\rVert_{L^2_\bz}^2\m\dd t+\delta
\int_0^t\lVert\prt_t\hat u\rVert_{L^2_\bz}^2\m\dd t+C(\eps, \delta)\int_0^t(1+\lVert\prt_t
\prt_x^2h\rVert_{L^2_x}^{2})\lVert\prt_z\hat u\rVert_{L^2_\bz}^2\m\dd t,
\end{align*}
where we used the uniform boundedness of $\lVert\prt_th\rVert_{L^2_x}$, the following interpolation inequality:
$$\lVert\prt_t\prt_xh\rVert_{L^2_x}\lesssim\lVert\prt_th\rVert_{L^2_x}^
{\frac{1}{2}}\lVert\prt_t\prt_x^2h\rVert_{L^2_x}^{\frac{1}{2}}. $$
and Young's inequality.
Finally, the last term is estimated by
\begin{equation*}\label{smallvarphilast}
\begin{aligned}
c_8&\leq 
\int_0^t\lVert\prt_th\rVert_{L^\infty_x}^2\lVert\prt_t\prt_xh\rVert_
{L^\infty_x}\lVert\hat u\rVert_{L^2_\bz}\lVert\prt_z\hat u\rVert_
{L^2_\bz}\m\dd t\\
&\lesssim \int_0^t\lVert\prt_t\prt_x^2h\rVert_{L^2_x}^2\m\dd t+\int_0^t(1+
\lVert\nabla\hat u\rVert_{L^2_\bz}^2)\lVert\prt_t\prt_xh\rVert_{L^2_x}^2\m\dd t.
\end{aligned}
\end{equation*}
All together we have the estimate
\begin{equation*}\label{smallvarphi1fin}
\begin{aligned}
\abs{R_7}
&\leq \eps\int_0^t\lVert\nabla\prt_t\hat u\rVert_{L^2_\bz}^2\m\dd t
+C(\eps)\int_0^t\left(\lVert\prt_t\hat u\rVert_{L^2_\bz}^2\m\dd t+\lVert\prt_t\prt_x^2h
\rVert_{L^2_x}^2\right)\m\dd t\\
&+C(\eps, \delta)\int_0^t\left(1+\lVert\prt_t\hat u\rVert_{L^2_\bz}^2+\lVert\prt_t
\prt_xh\rVert_{L^2_x}^2+\lVert\prt_t\prt_x^2h\rVert_{L^2_x}^2+\lVert
\prt_t\hat u\rVert_{L^2_\bz}^2\right)\lVert\nabla\hat u\rVert_
{L^2_\bz}^{2}\m\dd t.
\end{aligned}
\end{equation*}

Now we continue the estimate for the last integral including $\varphi$ 
in \rfb{sum}, namely $R_8$.
Evaluating this term one realizes that the terms not considered before are
$$\int_0^t\int_{\Om_1}|\prt_th||\prt_x\hat u||\prt_t\hat u|\m\dd\bz\dd t, \quad \int_0^t\int_
{\Om_1}|\prt_t\prt_xh||\prt_t\hat u||\prt_z\hat u|\m\dd \bz\dd t,  $$
$$\int_0^t\int_{\Om_1}
|\prt_t\prt_x^2h||\prt_t\hat u||\prt_x\hat u|\m\dd\bz\dd t, \quad \int_0^t\int_{\Om_1}|\prt_th||\prt_x^2h||\prt_t\hat u||\prt_x\hat u|\m\dd\bz\dd t. $$
The first two terms above are subcritical to $c_5$. The third and the last terms is estimated as follows
\begin{equation*}
\begin{aligned}
\int_0^t\int_{\Om_1}|\prt_t\prt_x^2h||\prt_t\hat u||\prt_x\hat u|\m\dd\bz\dd t
&\leq \int_0^t\int_0^1\lVert\prt_t\prt_x^2h\rVert_{L^2_x}\lVert\prt_t\hat u
\rVert_{L^\infty_x}\lVert\prt_x\hat u\rVert_{L^2_x}\m\dd z\dd t\\
&\leq \int_0^t\lVert\prt_t\prt_x^2h\rVert_{L^2_x}\lVert\prt_t\hat u\rVert_
{L^2_\bz}^{\frac{1}{2}}\lVert\prt_t\prt_x\hat u\rVert_{L^2_\bz}^
{\frac{1}{2}}\lVert\nabla\hat u\rVert_{L^2_\bz}\m\dd t\\
&\leq \eps\int_0^t\lVert\prt_t\prt_x\hat u\rVert_{L^2_\bz}^2\m\dd t+\int_0^t
\lVert\prt_t\prt_x^2h\rVert_{L^2_x}^2\lVert\nabla\hat u\rVert_{L^2_\bz}^2\m\dd t
+C(\eps)\int_0^t\lVert\prt_t\hat u\rVert_{L^2_\bz}^2\m\dd t.
\end{aligned}
\end{equation*}
and 
\begin{equation*}
\begin{aligned}
\int_0^t\int_{\Om_1}|\prt_th||\prt_x^2h||\prt_t\hat u||\prt_x\hat u|\m\dd\bz\dd t&\leq \int_0^t\int_0^1\lVert\prt_th\rVert_{L^\infty_x}\lVert\prt_x^2h\rVert_{L^2_x}\lVert\prt_t\hat u\rVert_{L^\infty_x}\lVert\prt_x\hat u\rVert_{L^2_x}\dd z\dd t\\
&\leq \int_0^t\lVert\prt_t\prt_xh\rVert_{L^2_x}^{\frac{1}{2}}\lVert\prt_t\hat u\rVert_{L^2_\bz}^{\frac{1}{2}}\lVert\prt_t\prt_x\hat u\rVert_{L^2_\bz}^{\frac{1}{2}}\lVert\prt_x\hat u\rVert_{L^2_\bz}\dd t\\
&\leq \eps\int_0^t\lVert\prt_t\prt_x\hat u\rVert_{L^2_\bz}^2\dd t+\int_0^t\lVert\prt_t\hat u\rVert_{L^2_\bz}^2\dd t+\int_0^t\lVert\prt_t\prt_xh\rVert_{L^2_x}^2\lVert\prt_x\hat u\rVert_{L^2_\bz}^2\dd t.
\end{aligned}
\end{equation*}

Therefore, we have
\begin{equation*}\label{lastvarphifin}
\begin{aligned}
\abs{R_8}&\leq \eps\int_0^t\lVert
\prt_t\prt_x\hat u\rVert_{L^2_\bz}^2\m\dd t
+C(\eps)\int_0^t
\lVert\prt_t\hat u\rVert_{L^2_\bz}^2\m\dd t\\
&\quad+C(\eps)\int_0^t\left(\lVert\prt_t\prt_x^2h\rVert_{L^2_x}^2+\lVert\prt_t\prt_xh
\rVert_{L^2_x}^2\right)\lVert\nabla\hat u\rVert_{L^2_\bz}^2\m\dd t.
\end{aligned}
\end{equation*}

{\bf Step 4. The estimate of the time-derivative of the convective term in \rfb{sum}.} This last part is the most critical estimate as it is related to the non-linearity of the Navier-Stokes equation. We split the terms appearing in $R_9$ in three sub-parts:
\begin{equation*}
\begin{aligned}
&\prt_t\left(\rho_f(\hat u-\prt_t\chi_h)
\cdot(B_h\nabla)\hat u\right)\cdot(\prt_t\hat u+G)=
\rho_f\underbrace{(\prt_t\hat u-\prt_t^2\chi_h)\cdot(B_h\nabla)\hat u\cdot(\prt_t\hat u+G)}_{:=r_1}
\\
&\quad +\underbrace{(\hat u-
	\prt_t\chi_h)\cdot(\prt_tB_h\nabla)\hat u\cdot(\prt_t\hat u+G)}_{:=r_2}
+\underbrace{(\hat u-\prt_t\chi_h)\cdot(B_h\nabla)\prt_t\hat u
	\cdot(\prt_t\hat u+G)}_{:=r_3}.
\end{aligned}
\end{equation*}
We analyze 
the above three forms one by one. We first have
\begin{equation}\label{lastterm1}
\begin{aligned}
r_1
&=\left(h\prt_t\hat u^1\prt_x\hat u^1-\prt_xhz\prt_t\hat u^1\prt_x\hat u^2
+\prt_t\hat u^2\prt_x\hat u^2-\prt_t^2hz\prt_x\hat u^2\right)\left(
\prt_t\hat u^1+\frac{1}{h}\prt_th\hat u^1\right)\\
&\quad+\left(h\prt_t\hat u^1\prt_z\hat u^1-\prt_xhz\prt_t\hat u^1
\prt_z\hat u^2+\prt_t\hat u^1\prt_z\hat u^2-\prt_t^2hz\prt_z\hat u^2\right)
(\prt_t\hat u^2+\frac{z}{h}\prt_th\prt_xh\hat u^1-\prt_t
\prt_xhz\hat u^1).
\end{aligned}
\end{equation}
The terms in \rfb{lastterm1} that appear are summarized by
\begin{align*}
&r_{11}:=\int_0^t\int_{\Om_1}|\prt_t\hat u|^2\abs{\prt_x\hat u}\m\dd\bz\dd t, \quad r_{12}:=\int_0^t\int_{\Om_1}
|\prt_th||\prt_t^2h||\hat u||\prt_z\hat u|\m\dd\bz\dd t, \\ &r_{13}:=\int_0^t\int_{\Om_1}|\prt_t^2h||\prt_t
\prt_xh||\hat u||\prt_z\hat u|\m\dd\bz\dd t,
\end{align*}
where $r_{12}$ is subcritical to $r_{13}$.
We first find by Ladyzhenskaya's interpolation estimate that
\begin{equation*}
\begin{aligned}
r_{11}&\leq \int_0^t\lVert
\prt_x\hat u\rVert_{L^2_\bz}\lVert\prt_t\hat u\rVert_{L^4_\bz}^2\m\dd t\leq \eps\int_0^t\lVert\nabla\prt_t\hat u\rVert_{L^2_\bz}^2\m\dd t+C(\eps)
\int_0^t\lVert\prt_t\hat u\rVert_{L^2_\bz}^2\lVert\nabla\hat u
\rVert_{L^2_\bz}^2\m\dd t,
\end{aligned}
\end{equation*} 
and similarly to above (using 1D interpolation for $L^\infty$) we find
\begin{equation*}
\begin{aligned}
r_{13}
&\leq \int_0^t\int_0^1\lVert\prt_t^2h\rVert_{L^2_x}\lVert\prt_z\hat u
\rVert_{L^2_x}\lVert\prt_t\prt_xh\rVert_{L^\infty_x}\lVert\hat u\rVert_
{L^\infty_x}\m\dd z\dd t\\
&\lesssim \int_0^t\lVert\prt_t^2h\rVert_{L^2_x}\lVert\nabla\hat u\rVert_
{L^2_\bz}^{\frac{3}{2}}\lVert\prt_t\prt_x^2h\rVert_{L^2_x}\m\dd t\\
&\lesssim \int_0^t\lVert\prt_t^2h\rVert_{L^2_x}^2(1+\lVert\nabla\hat u\rVert_
{L^2_\bz}^2)\m\dd t+\int_0^t
\lVert\nabla\hat u\rVert_{L^2_\bz}^2\lVert\prt_t\prt_x^2h\rVert_{L^2_x}^2\m\dd t.
\end{aligned}
\end{equation*}
This allows to conclude that
\begin{equation*}
\begin{aligned}
\int_0^t\int_{\Omega_1}\abs{r_1}\dd \bz\dd t
&\leq \eps\int_0^t\lVert\nabla\prt_t\hat u\rVert_{L^2_\bz}^2\m\dd t+C(\eps)\int_0^t
\left(\lVert\prt_t^2h\rVert_{L^2_x}^2+\lVert\prt_t\prt_xh\rVert_{L^2_x}^2\right)\m\dd t\\
&\quad+C(\eps)\int_0^t\left(1+\lVert\prt_t^2h\rVert_{L^2_x}^2+\lVert\prt_t\prt_x^2h
\rVert_{L^2_x}^2+\lVert\prt_t\hat u\rVert_{L^2_\bz}^2\right)\lVert
\nabla\hat u\rVert_{L^2_\bz}^2\m\dd t.
\end{aligned}
\end{equation*}
Next we compute
\begin{equation*}
\begin{aligned}
r_2
&=\left(\hat u^1\prt_th\prt_x\hat u^1-\prt_t\prt_xhz\hat u^1\prt_x\hat 
u^2\right)\left(\prt_t\hat u^1+\frac{1}{h}\prt_th\hat u^1\right)\\
&\quad+\left(\prt_th\hat u^1\prt_z\hat u^1-\prt_t\prt_xhz\hat u^1\prt_z\hat 
u^2\right)\left(\prt_t\hat u^2+\frac{z}{h}\prt_th\prt_xh\hat u^1-
\prt_t\prt_xhz\hat u^1\right),
\end{aligned}
\end{equation*}
which can be estimated as $c_5$ and $c_7$.
Finally
\begin{equation*}
\begin{aligned}
r_3
&=\left(h\hat u^1\prt_t\prt_x\hat u^1-\prt_xhz\hat u^1\prt_t\prt_z\hat u^1
+\hat u^2\prt_t\prt_z\hat u^1-\prt_thz\prt_t\prt_z\hat u^1\right)
\left(\prt_t\hat u^1+\frac{1}{h}\prt_th\hat u^1\right)\\
&\quad+\left(h\hat u^1\prt_t\prt_x\hat u^2-\prt_xhz\hat u^1\prt_t\prt_z\hat u^2
+\hat u^2\prt_t\prt_z\hat u^2-\prt_thz\prt_t\prt_z\hat u^2\right)
\left(\prt_t\hat u^2+\frac{z}{h}\prt_th\prt_xh\hat u^1-
\prt_t\prt_xhz\hat u^1\right)
\end{aligned}
\end{equation*}
In this part, we need to estimate: 
\begin{equation*}
\begin{aligned}
&r_{31}:=\int_0^t\int_{\Om_1}\abs{\hat u}\abs{\prt_t\nabla\hat u}\abs{\prt_t\hat u}\m\dd\bz\dd t, \quad &r_{32}:=\int_0^t
\int_{\Om_1}\abs{\prt_th}|\hat u|^2\abs{\prt_t\nabla\hat u}\m\dd\bz\dd t,
\\
&r_{33}:=\int_0^t\int_{\Om_1}|\hat u|^2\abs{\prt_t\prt_xh}\abs{\prt_t\nabla\hat u}\m\dd\bz\dd t, \quad 
&r_{34}:=\int_0^t\int_{\Om_1}\abs{\prt_th}\abs{\prt_t\prt_xh}\abs{\hat u}\abs{\prt_t\nabla \hat u}\m\dd\bz\dd t,\\
&r_{35}=\int_0^t\int_{\Om_1}|\prt_th|^2|\hat u||\prt_t\prt_z\hat u|\dd \bz\dd t.
\end{aligned}
\end{equation*}
Notice that $r_{32}$ and $r_{35}$ is subcritical to $r_{33}$ and $r_{34}$, respectively.
We start (using Ladyzhenskaya's interpolation estimate once more)
\begin{equation*}
\begin{aligned}
r_{31}
&\leq \int_0^t\lVert\prt_t\nabla\hat u\rVert_{L^2_\bz}\lVert\prt_t\hat 
u\rVert_{L^4_\bz}\lVert\hat u\rVert_{L^4_\bz}\m\dd t\\
&\lesssim\int_0^t\lVert\nabla\prt_t\hat u\rVert_{L^2_\bz}^{\frac{3}{2}}
\lVert\prt_t\hat u\rVert_{L^2_\bz}^{\frac{1}{2}}\norm{\nabla\hat u}_{L^2_\bz}^{\frac{1}{2}}\m\dd t\\
&\leq \eps\int_0^t\lVert\nabla\prt_t\hat u\rVert_{L^2_\bz}^2\m\dd t+C(\eps)
\int_0^t\lVert\prt_t\hat u\rVert_{L^2_\bz}^2\lVert\nabla\hat u
\rVert_{L^2_\bz}^2\m\dd t.
\end{aligned}
\end{equation*}
Next
\begin{equation*}
\begin{aligned}
r_{33}
&\leq \int_0^t\lVert\prt_t\prt_xh\rVert_{L^\infty_x}\lVert\hat u\rVert_
{L^4_\bz}^2\lVert\prt_t\nabla\hat u\rVert_{L^2_\bz}\m\dd t\\
&\lesssim \int_0^t\lVert\prt_t\prt_x^2h\rVert_{L^2_x}^\frac{1}{2}\lVert\prt_t\prt_xh\rVert_{L^2_x}^\frac{1}{2}\lVert\hat u\rVert_
{L^2_\bz}\lVert\nabla\hat u\rVert_{L^2_\bz}\lVert\prt_t\nabla
\hat u\rVert_{L^2_\bz}\m\dd t\\
&\leq \eps\int_0^t\lVert\prt_t\prt_x\hat u\rVert_{L^2_\bz}^2\m\dd t+
C(\eps)\int_0^t\lVert\prt_t\prt_x^2h\rVert_{L^2_x}^2\lVert\nabla\hat u\rVert
_{L^2_\bz}^2\m\dd t,
\end{aligned}
\end{equation*}
where we used Young's and Poincar\'e's inequality in the last step. Finally
\begin{equation*}
\begin{aligned}
r_{34}
&\leq \int_0^t\lVert\hat u\rVert_{L^2_\bz}\lVert\prt_t\nabla\hat
u\rVert_{L^2_\bz}\lVert\prt_th\rVert_{L^\infty_x}\lVert\prt_t
\prt_xh\rVert_{L^\infty_x}\m\dd t\\
&\lesssim\int_0^t\lVert\prt_th\rVert_{L^2_x}^{\frac{1}{2}}\lVert\prt_t\prt_x
h\rVert_{L^2_x}\lVert\prt_t\prt_x^2h\rVert_{L^2_x}^{\frac{1}{2}}\lVert
\prt_t\nabla\hat u\rVert_{L^2_\bz}\m\dd t\\
&\lesssim\int_0^t\lVert\prt_th\rVert_{L^2_x}\lVert\prt_t\prt_x^2h\rVert_{L^2_x}\lVert
\prt_t\nabla\hat u\rVert_{L^2_\bz}\m\dd t\\
&\leq \eps\int_0^t\lVert\prt_t\nabla\hat u\rVert_{L^2_\bz}^2\m\dd t+C(\eps)
\int_0^t\lVert\prt_t\prt_x^2h\rVert_{L^2_x}^2\m\dd t.
\end{aligned}
\end{equation*}
The remaining integrals can be estimated directly. 

Putting all the estimates together and taking the supremum with respect to time on both sides of \rfb{sum}, we obtain from Gr\"onwall’s lemma that for $T>0$, 
\begin{equation*}\label{finalformula}
\begin{aligned}
&\sup_{t\in (0, T)}\int_{\Om_1}|\prt_t\hat u|^2\m\dd\bz+\int_0^T\int_{\Om_1}
\left|\nabla\prt_t\hat u\right|^2\m\dd\bz\dd t+\sup_{t\in (0, T)}\int_0^L\left(\rho_s\left|\prt_t^2h
\right|^2+\left|\beta\prt_t\prt_xh\right|^2+\alpha
\left|\prt_t\prt_x^2h\right|^2\right)\m\dd x\\
&\quad \lesssim \int_{\Om_1}\left|(\prt_t\hat u)(0)\right|^2\m\dd\bz+
\int_0^L\left(|(\prt_t^2h)(0)|^2+|\prt_xh_1|^2+
|\prt_x^2h_1|^2\right)\m\dd x+1
\end{aligned}
\end{equation*}
where the bound depends on $C_0$ defined in \rfb{energyhat} and $\min_{(t,x)\in [0,T]\times [0,L]}h(t,x)=:h_{\min}$ only.

\subsection{Formally obtaining the initial values for $\partial_t u$ and $\partial_t^2 h$}
Observe that we only need to know
\[
\norm{(\partial_t \hat{u})(0)}_{L^2(\Omega_1)}+\norm{(\partial_t^2 h)(0)}_{L^2_x}.
\]
{\em Assume for the moment that $\Omega(0)=\Omega_1$}. Then (formally) multiplying the equation at zero with $((\partial_t \hat{u})(0),
(\partial_t^2h)(0))$ we find that
\begin{align*}
&\rho_s\norm{(\partial_t \hat{u})(0)}_{L^2(\Omega_1)}^2+\rho_f\norm{(\partial_t^2 h)(0)}_
{L^2_x}^2
\\
&=\int_{\Omega_{1}} (\mu\Delta \hat{u}_0-\rho_f(\hat{u}_0\cdot\nabla) \hat{u}_0 )\cdot \partial_t \hat{u}(0)\, 
\dd \bz+\int_0^L ( \beta\partial_{x}^2h_0-\alpha\partial_{x}^4h_0) \partial_t^2h(0)\, \dd x\\
&\quad -2\mu\int_0^L {\rm e_2}\cdot D(u_0)(0,x,h_0)(-\prt_xh_0{\rm e_1}+{\rm e_2})(\prt_t^2h)(0)\dd x.
\end{align*} 
This implies a uniform estimate, if $\hat{u}_0\in H^2(\Omega_1)$ and $h_0
\in H^4(0,L)$. Moreover, it is linear in $\hat{u}_0\in H^2(\Omega_1)$ and $h_0
\in H^4(0,L)$. 
Rigorously the initial data for the time-derivative will be established in Step 2 in the Galerkin construction below.
%

\section{Construction of a strong solution}\label{Garlekinsec}

In order to make the estimates rigorous we have to derive the estimates on an approximate or mollified level. This is typical for higher-order in time estimates. Please note that due to the (weak-strong)-uniqueness constructing a smooth solution implies that it is unique.

The key that in order of being able to rigorously differentiating the equation in time is the correct treatment of the pressure. For that we introduce the following solenoidal substitute for the fluid velocity
$
\hat{v}=B_h^\intercal\hat{u},
$
and in particular please note that
\begin{align*}\label{eq:key-identity}
B_h^{-\intercal}\partial_t \hat{v}=\partial_t\hat{u}+B_h^{-\intercal}\partial_tB_h^\intercal\hat{u}=\partial_t\hat{u}+G,
\end{align*}
where $G$ is defined in \eqref{G}.

We use a Galerkin approximation on which we then rigorously can perform the key estimate. For that the following steps will now be performed:
\begin{enumerate}
	\item[4.1] We prepare the partial differential equation and its time-derivative such that it can be approximated by a Galerkin method. For that we introduce a weak equation in terms of solenoidal test functions and decouple the geometry from the equation.
	\item[4.2] We show the existence of a space-discrete and decoupled solution to the time-differentiated equation that solves an energy estimate. By a fixed-point argument we get the desired coupled approximation sequence.
	\item[4.3] We show that the a-priori estimates derived in \eqref{formal-est} is valid on the discrete level.
	\item[4.4] We construct a strong solution. For that we pass to the limit with the approximation and show further spatial regularity using the properties of the steady Stokes operator. This establishes a strong solution. 
\end{enumerate}
Hence the strategy is to derive a discretized weak formulation for $\hat{v}$, which we will approximate by a solenoidal Galerkin basis.
This next will be (formally) differentiated in time.

\subsection{Preperation}
%


Following the convention for Galerkin schemes we consider the following weak formulation for $(\hat u, h)$ a solution to
system \rfb{hatsys}--\rfb{priohat}. We introduce the space of $L$-periodic smooth functions
\[
C^\infty_{\text{per}}(0,L):=\{f\in C^\infty(\mathbb{R})|f(\cdot)\equiv f(\cdot+L)\},
\]
and 
\[
C^\infty_{\text{per},0}(\Omega_1):=\{f\in C^\infty(\mathbb{R}\times [0,1])|f(\cdot,y)\in C^\infty_{\text{per}}(0,L)\text{ and }f(\cdot,0)=0 \}.
\]
For every $( \phi,\Phi)\in C^\infty([0,T];C^\infty_{\text{per}}(0,L))\times
C^\infty([0, T];C^\infty_{\text{per},0}(\Omega_1)))$, with
$\div B^\intercal\Phi=0$ 
in $\Om_1$, $\Phi(t, x, 1)=\phi(t,x)\m {\rm e_2}$, $\Phi(t,x,0)=0$ and  $\Phi(t, 0, z)=\Phi(t, L, z)$,
we have for all time values
\vspace{+2mm}
\begin{equation}\label{weakformphi}
\begin{aligned}
&\int_{\Om_1}\rho_fh\prt_t\hat u\cdot \Phi\m\dd\bz+
\frac{1}{2}\int_{\Om_1}\rho_f\hat u(B_h\nabla)
\hat u\cdot \Phi\m\dd\bz-\frac{1}{2}\int_{\Om_1}\rho_f\hat 
u(B_h\nabla)\Phi\cdot\hat u\m\dd\bz\\
&+\frac{1}{2}\int_0^L\rho_f|\prt_th|^2\phi\m\dd x
-\int_{\Om_1}
\rho_f\prt_t\chi_h(B_h\nabla)\hat u\cdot\Phi\m\dd\bz
+\mu\int_{\Om_1}(A_h\nabla)\hat u:\nabla\Phi\m\dd\bz\\
&+\rho_s\int_0^L\prt_t^2h\phi\m\dd x-\beta\int_0^L\prt_x^2h
\phi\m\dd x+\alpha\int_0^L\prt_x^4h\phi\m\dx
=0,
\end{aligned}\vspace{+2mm}
\end{equation}
with the relation $\hat u(t, x, 1)=\prt_th\m {\rm e_2}$ for $(t, x)\in (0, T)\times (0, L)$.

{The above weak formulation can be derived by multiplying the strong equation with $\phi$ and $\Phi$ respectively, integrating in time-space and performing integration by parts. In particular we used the following 
	form for the convective term:\vspace{+2mm}
	\begin{equation}\label{convective}
	\begin{aligned}
	\int_{\Om_1}\rho_f\hat u(B_h\nabla)\hat u\cdot
	\Phi\m\dd\bz&=\frac{1}{2}\int_{\Om_1}\rho_f\hat u(B_h\nabla)
	\hat u\cdot\Phi\m\dd\bz
	-\frac{1}{2}\int_{\Om_1}\rho_f\hat u(B_h\nabla)\Phi
	\cdot\hat u\m\dd\bz +\frac{1}{2}\int_0^L\rho_f|\prt_th|^2\phi\m\dd x,
	\end{aligned}\vspace{+2mm}
	\end{equation}
	which plays an important role in the construction of the approximate 
	solution and in particular preserves the structure of the energy 
	estimate in the Galerkin procedure.
}

Following the weak formulation \rfb{weakformphi}, we define
$
\Psi(x,z)=\bbm{\Psi^1 & \Psi^2}^\intercal :=B_{h}^\intercal\Phi$,
which gives that $\div \Psi=0$ in $\Om_1$. It is worthwhile noting 
that $\Psi$ is independent of the time variable. 
Recalling the structure of the matrix $B_h$ (please refer to 
\rfb{Ah}), with $\Phi=\bbm{\Phi^1 &\Phi^2}^\intercal$ we have \vspace{+2mm}
$$\Psi=\begin{bmatrix} h & 0  \\ -z\prt_xh  & 1 \m 
\end{bmatrix}\begin{bmatrix} \Phi^1 \\ \Phi^2 \end{bmatrix}=
\begin{bmatrix} h\Phi^1 \\ -z\prt_xh\Phi^1+
\Phi^2\end{bmatrix}. \vspace{+2mm}$$
Note that $\Phi^1(x, 1)=0$, therefore we obtain that
$\Psi(x,1)=\Phi^2(x,1)\m{\rm e_2}$. This 
gives the corresponding weak formulation 
in terms of $\Psi$. For every $( \psi,\Psi)\in C^\infty_{\text{per}}(0,L))\times
C^\infty(\Omega_1)$, with
$\div \Psi=0$ 
in $\Om_1$, $\Psi( x, 1)=\psi(x)\m {\rm e_2}$, $\Psi(x,0)=0$ and  $\Psi( 0, z)=\Phi( L, z)$ we have for all times \vspace{+1mm}
\begin{equation}\label{weakformpsi}
\begin{aligned}
&\int_{\Om_1}\rho_fh\prt_t\hat u\cdot B_h^{-\intercal}
\Psi\m\dd\bz+\frac{1}{2}\int_{\Om_1}\rho_f\hat u(B_h\nabla)
\hat u\cdot B_h^{-\intercal}\Psi\m\dd\bz
-\frac{1}{2}\int_{\Om_1}\rho_f\hat u(B_h\nabla)
(B_h^{-\intercal}\Psi)\cdot\hat u\m\dd\bz\\
&+\frac{1}{2}\int_0^L\rho_f|\prt_th|^2\psi\m\dd x
-\int_{\Om_1}\rho_f\prt_t\chi_h(B_h\nabla)\hat u
\cdot B_h^{-\intercal}\Psi\m\dd\bz+\mu\int_{\Om_1}(A_h\nabla)
\hat u:\nabla(B_h^{-\intercal}\Psi)\m\dd\bz\\
&+\rho_s\int_0^L\prt_t^2h\psi\m\dd x-\beta\int_0^L\prt_x^2h\psi\dd x
+\alpha\int_0^L\prt_x^4h\psi\m\dd x
=0,
\end{aligned}
\end{equation}
with the relation $\hat u(t, x, 1)=\prt_th\m {\rm e_2}$ for $(t, x)\in (0, T)\times (0, L)$.

Next we differentiate \rfb{weakformpsi} in time. This implies for the same set of test functions and for every 
$t\in (0, T)$, which is given by\footnote{From here on we use scalar products for integrals to shorten notation.}
\begin{align*}
&\left\langle \rho_f\prt_th\prt_t\hat u, B_h
^{-\intercal}\Psi\right\rangle+\left\langle \rho_fh\prt_t^2\hat u, B_h^{-\intercal}\Psi\right\rangle+\left\langle \rho_fh
\prt_t\hat u, \prt_t B_h^{-\intercal}\Psi\right\rangle
\\
&+
\frac{1}{2}\left\langle \rho_f\prt_t\hat u(B_h\nabla)
\hat u, B_h^{-\intercal}\Psi\right\rangle+\frac{1}{2}
\left\langle \rho_f\hat u(\prt_t B_h \nabla)
\hat u, B_h^{-\intercal}\Psi\right\rangle
+\frac{1}{2}\left\langle\rho_f\hat u(B_h\nabla)\prt_t
\hat u, B_h^{-\intercal}\Psi\right\rangle
\\
&+\frac{1}{2}\left\langle \rho_f\hat u(B_h\nabla)\hat
u, \prt_t B_h^{-\intercal}\Psi\right\rangle
-
\frac{1}{2}\left\langle \rho_f\prt_t\hat u(B_h\nabla)
(B_h^{-\intercal}\Psi), \hat u\right\rangle
-
\frac{1}{2}\left\langle\rho_f\hat u(\prt_tB_h\nabla)
(B_h^{-\intercal}\Psi), 
\hat u\right\rangle
\\
&-\frac{1}{2}\left\langle \rho_f\hat u(B_h\nabla)
(\prt_t B_h^{-\intercal}\Psi), 
\hat u\right\rangle
-\frac{1}{2}\left\langle \rho_f\hat
u(B_h\nabla)(B_h^{-\intercal}\Psi), \prt_t
\hat u\right\rangle
+
\left\langle \rho_f\prt_t^2h\prt_th, 
\psi\right\rangle
\\
&-\left\langle\rho_f\prt_t^2\chi_h(B_h\nabla)\hat
u, B_h^{-\intercal}\Psi\right\rangle
-\left\langle
\rho_f\prt_t\chi_h(\prt_tB_h\nabla)\hat u,
B_h^{-\intercal}\Psi\right\rangle
-\left\langle \rho_f\prt_t\chi_h(B_h\nabla)\prt_t
\hat u, B_h^{-\intercal}\Psi\right\rangle
\\
&-\left
\langle \rho_f\prt_t\chi_h(B_h\nabla)\hat u,
\prt_t B_h^{-\intercal}\Psi\right\rangle
+\left\langle \mu (\prt_tA_h\nabla)\hat u,
\nabla(B_h^{-\intercal}\Psi)\right\rangle
+\left\langle
\mu(A_h\nabla)\prt_t\hat u, \nabla(B_h
^{-\intercal}\Psi)\right\rangle
\\
&+\left\langle \mu(A_h\nabla)
\hat u, \nabla(\prt_tB_h^{-\intercal}\Psi)
\right\rangle
+\left\langle\rho_s\prt_t^3h, \psi\right\rangle
-\left
\langle\beta
\prt_t\prt_x^2h, \psi\right\rangle
+\left\langle\alpha
\prt_t\prt_x^2h, \prt_x^2\psi\right\rangle
=0,\vspace{+2mm}
\end{align*}
and $\partial_t\hat u(t,x,1)=\partial_t^2h(t,x){\rm e_2}$ for all $(t,x)\in (0, T)\times (0, L)$.\vspace{+2mm}

Next this equation will be {\em decoupled}. 
Let us consider a geometry given by the function $\tilde{h}\in C^2([0,T]\times [0,L]; [h_{\min},h_{\max}])$. Respectively we define the coefficients 
$$\tilde{B}=B_{\tilde{h}},\qquad  \tilde{A}=A_{\tilde{h}}, \qquad \tilde{\chi}=\chi_{\tilde{h}},$$
which are now independent of the solution. Then we aim to solve the following decoupled system.

We look for the coupled solution $(\hat{u}, g)$ that is periodic in $[0,L]$ satisfying $\div {\tilde{B}}^{\intercal} \hat{u}=0$, $\hat u(t,x,1)=\prt_tg(t,x){\rm e_2}$, $\hat u(t,x,0)=0$,
and 
\begin{equation}\label{eq:decoupled}
\begin{aligned}
&\left\langle \rho_f\prt_t\tilde{h}\prt_t\hat u, {\tilde{B}}
^{-\intercal}\Psi\right\rangle+\left\langle \rho_f\tilde{h}\prt_t^2\hat u, {\tilde{B}}^{-\intercal}\Psi\right\rangle+\left\langle \rho_f\tilde{h}
\prt_t\hat u, \prt_t{\tilde{B}}^{-\intercal}\Psi\right\rangle\\
&+\frac{1}{2}\left\langle \rho_f\prt_t\hat u(\tilde{B}\nabla)
\hat u, {\tilde{B}}^{-\intercal}\Psi\right\rangle+\frac{1}{2}
\left\langle \rho_f\hat u(\prt_t{\tilde{B}}\nabla)
\hat u, {\tilde{B}}^{-\intercal}\Psi\right\rangle\\
&+\frac{1}{2}\left\langle\rho_f\hat u(\tilde{B}\nabla)\prt_t
\hat u, {\tilde{B}}^{-\intercal}\Psi\right\rangle
+\frac{1}{2}\left\langle \rho_f\hat u(\tilde{B}\nabla)\hat
u, \prt_t{\tilde{B}}^{-\intercal}\Psi\right\rangle\\
&-\frac{1}{2}\left\langle \rho_f\prt_t\hat u(\tilde{B}\nabla)
({\tilde{B}}^{-\intercal}\Psi), \hat u\right\rangle-
\frac{1}{2}\left\langle\rho_f\hat u(\prt_t\tilde{B}\nabla)
({\tilde{B}}^{-\intercal}\Psi), 
\hat u\right\rangle\\
&-\frac{1}{2}\left\langle \rho_f\hat u(\tilde{B}\nabla)
(\prt_t{\tilde{B}}^{-\intercal}\Psi), 
\hat u\right\rangle-\frac{1}{2}\left\langle \rho_f\hat 
u(\tilde{B}\nabla)({\tilde{B}}^{-\intercal}\Psi), \prt_t
\hat u\right\rangle\\
&+\frac{1}{2}\left\langle \rho_f\prt_t^2\tilde{h}\prt_tg, 
\psi\right\rangle
+\frac{1}{2}\left\langle \rho_f\prt_t\tilde{h}\prt_t^2g, 
\psi\right\rangle\\
&-\left\langle\rho_f\prt_t^2\tilde{\chi}(\tilde{B}\nabla)\hat
u, {\tilde{B}}^{-\intercal}\Psi\right\rangle-\left\langle
\rho_f\prt_t\tilde{\chi}(\prt_t\tilde{B}\nabla)\hat u,
{\tilde{B}}^{-\intercal}\Psi\right\rangle\\
&-\left\langle \rho_f\prt_t\tilde{\chi}(\tilde{B}\nabla)\prt_t
\hat u, {\tilde{B}}^{-\intercal}\Psi\right\rangle-\left
\langle \rho_f\prt_t\tilde{\chi}(\tilde{B}\nabla)\hat u,
\prt_t{\tilde{B}}^{-\intercal}\Psi\right\rangle\\
&+\left\langle \mu (\prt_t\tilde{A}\nabla)\hat u,
\nabla({\tilde{B}}^{-\intercal}\Psi)\right\rangle+\left\langle
\mu(\tilde{A}\nabla)\prt_t\hat u, \nabla({\tilde{B}}
^{-\intercal}\Psi)\right\rangle+\left\langle \mu(\tilde{A}\nabla)
\hat u, \nabla(\prt_t{\tilde{B}}^{-\intercal}\Psi)
\right\rangle\\
&+\left\langle\rho_s\prt_t^3g, \psi\right\rangle-\left
\langle\beta
\prt_t\prt_x^2g, \psi\right\rangle+\left\langle\alpha
\prt_t\prt_x^2g, \prt_x^2\psi\right\rangle
=0,
\end{aligned}
\end{equation}
for all times and every $( \psi,\Psi)\in C^\infty_{\text{per}}(0,L))\times
C^\infty(\Omega_1)$, with
$\div \Psi=0$ 
in $\Om_1$, $\Psi( x, 1)=\psi(x)\m {\rm e_2}$, $\Psi(x,0)=0$ and  $\Psi( 0, z)=\Phi( L, z)$.

\begin{rmk}
	{\rm We remark here that in \rfb{eq:decoupled}, by fixing the geometry $\tilde h$ and using $g$, we decouple $\langle \rho_f\prt_t^2h\prt_th, \psi\rangle$ into: 
		$$\frac{1}{2}\langle \rho_f\prt_t^2\tilde h\prt_tg, \psi\rangle+\frac{1}{2}\langle \rho_f\prt_t\tilde h\prt_t^2g, \psi\rangle.$$ 
		This formulation plays an important role in the derivation of the energy estimate of \rfb{weakform1st} in Proposition \ref{firstenergy}.
	}
\end{rmk}
\subsection{Galerkin approximations}
For every time-value we will seek a solution in the setting of the following Sobolev spaces.
The periodic Sobolev spaces are introduced as
\[
W^{k,p}_{per}(0,L):=\overline{C^\infty_{per}(0,L)}^{\norm{\cdot}_{W^{k,p}(0,L)}},
\]
for $1\leq p<\infty$ and $k\in \mathbb{N}$.

We introduce the space 
\[
\dot{H}^2_{per}(0, L):=\left\{ \check \psi\in H^2_{per}(0,L)|\int_0^L\check \psi\, \dd x=0\right\},
\]
which is a Hilbert space w.r.t.\ the scalar product $(\check \psi,\xi)_{\dot{H}^2(0,L)}:=\int_0^L\partial_x^2\check \psi\m\partial_x^2\xi\, \dd x$ (this is indeed a Hilbert space as the affine functions in periodic spaces are constants).  
 We take $(\check \psi_k)_{k\in \mathbb{N}}$ as a basis of $\dot{H}_{per}^2(0,L)$ that is orthonormal in $L^2(0,L)$.\footnote{These functions are precisely of the form $a_0\frac{\sin(\frac{k\pi}{L}x)}{k^2}$ and $a_0\frac{\cos(\frac{k\pi}{L}x)}{k^2}$, with $a_0$ depending on $L$.} 
 
Further for the fluid velocities we consider
\[
V_{per,0}^1(\Omega_1):=\left\{\hat\Psi\in H^1_{per,0}(\Omega_1)| \m \div \hat\Psi=0\right\},
\]
where the subscript "$0$" has the similar meaning with the one introduced above \rfb{weakformphi}, i.e. $\hat \Psi(\cdot, 0)=0$.
This Hilbert space $V_{per,0}^1(\Omega_1)$ is w.r.t.\ the scalar product
\[
(\hat \Psi,\hat w)_{V_{per,0}^1(\Omega_1)}=\int_{\Omega_1}\nabla\hat \Psi\cdot \nabla\hat w\, \dd x\, \dd y.
\]
We will decompose this space into
\[
V_{per,0,0}^1(\Omega_1):=\left\{\hat\Psi\in V_{per,0}^1(\Omega_1)|\m\hat\Psi(\cdot,1)=0\right\}
\]
and its orthogonal complement $\left(V_{per,0,0}^1(\Omega_1)\right)^\perp$. We will construct bases for each part separately using the Stokes operator.

First we collect all  eigenfunctions of the following Stokes problem:
\begin{equation*}
\left\{\begin{aligned}
&-\Delta\hat \Psi_k+\nabla\hat  p_k=\lambda_k\hat \Psi_k \quad 
&\text{in}\quad \Om_1,\\
&\div \hat \Psi_k=0 \quad &\text{in}\quad \Om_1,\\
&\hat \Psi_k(\cdot,0)=0=\hat\Psi_k(\cdot,1) \quad &\text{on}\quad [0,L],
\\
&\hat \Psi_k(\cdot,z)=\hat\Psi_k(\cdot+L,z)\quad &\text{for all }z\in [0,1].
\end{aligned}\right.
\end{equation*}
where $\lambda_k$ is the corresponding eigenvalue of the eigenfunction 
$\hat \Psi_k$ for every $k\in\nline$. This provides a smooth basis of 
$V_{per,0,0}^1(\Omega_1)$ which is orthonormal in $V_{per,0}^1(\Omega_1)$ and orthogonal in $L^2(\Omega_1)$, respectively.

Next we construct $(\check \Psi_k)_{k\in\nline}$ by extending $(\check \psi_k)_{k\in 
	\nline}$, the basis of $\dot{H}_{per}^2(0,L)$, through the system:
\begin{equation*}
\left\{\begin{aligned}
&-\Delta\check \Psi_k+\nabla\check p_k=0&\text{in}\quad \Om_1,\\
&\div \check \Psi_k=0 &\text{in}\quad \Om_1,\\
& \check \Psi_k=
\left\{\begin{aligned}
&\check \psi_k(x)\m {\rm e_2}\quad &(0, L)\times\{z=1\},\\
& 0 \quad & (0, L)\times\{z=0\}.
\end{aligned}\right.
\\
&\check \Psi_k(\cdot,z)=\check\Psi_k(\cdot+L,z)\quad &\text{for all }z\in [0,1].
\end{aligned}
\right.
\end{equation*}
These functions are smooth and linearly independent. Please observe that for $\hat w \in V_{per,0}^1(\Omega_1)$ we find that
\begin{align*}
0=&\int_{\Omega_1}(-\Delta\check \Psi_k+\nabla\check p_k) \hat w\, \dd x\, \dd y
= \int_{\Omega_1}\nabla \check \Psi_k \cdot \nabla \hat w\, \dd x\, \dd y\\
&\quad -\int_0^L(\prt_x\check \Psi_k^2)(\cdot, 1)\hat w^1(\cdot, 1)\dd x + \int_{0}^L (\check p_k -\partial_y\check \Psi_k^2)(\cdot, 1) \hat w^2(\cdot ,1)\, \dd x,
\end{align*}
which implies that $ \check \Psi_k\in  (V_{per,0,0}^1(\Omega_1))^\perp$.

Now we define $(\Psi_k, \psi_k)_{k\in\nline}$ in a way of enumeration by 
\begin{equation}\label{basispsik}
\left(\Psi_k, \psi_k\right):=
\left\{\begin{aligned}
&\left(\check \Psi_{\frac{k+1}{2}}, \check \psi_{\frac{k+1}{2}}\right)
\quad &k\m\m\m \text{odd}\\
&\left(\hat \Psi_{\frac{k}{2}}, 0\right)\quad &k\m\m\m\text{even}
\end{aligned}
\right.\qquad\text{ for all }k\in\nline.
\end{equation}
The $\text{span} \left\{(\Psi_k,\m\psi_k)\m\left|\m k\in \nline\right.
\right\}$
consists of all couples
$(\Psi, \psi)\in V^1_{per,0}(\Om_1)\times \dot{H}_{per}^2(0, L) $ with $\Psi(x,1)=\psi\m{\rm e_2}$ for every $x\in (0, L)$. We observe further that the space $\text{span} \{(\tilde{B}
^{-\intercal}\Psi_k, \psi_k)| k\in\nline\}$ consists of the test 
function pairs:
$$(\Phi, \phi)\in\left\{(\Phi, \phi)\in H^1_{per,0}(\Om_1)\times 
\dot{H}_{per}^2(0, L)|\m \div (\tilde{B}^{\intercal}\Phi)=0\m\m\m \text{in}
\m \m\m\Om_1, \m \m\m \Phi(t, x, 1)=\phi \m{\rm e_2}\right\}, $$ which are $C^2$ in space-time since 
$\tilde B$ is $C^2$ in space-time.
On that space we introduce the projection
$\Pscr_N$, that is defined as 
\begin{equation}\label{projec}
\begin{aligned}
&\Pscr_N\phi:=\sum_{k=1}^N P^k(\phi)\psi_k:=\sum_{k=1}^N(
\phi, \psi_k)_{H^2_{per,0}(0, L)}\psi_k
\\
&\Pscr_N\Phi:=\tilde{B}^{-\intercal}\sum_{k=1}^NP^k(\Phi)\Psi_k
=\tilde{B}^{-\intercal}\Big(\sum_{k\leq N,k\text{ odd}}P^k(\phi)\Psi_k+\sum_{k\leq N,k\text{ even}}(\Psi_k,\tilde{B}^{\intercal}\Phi)_{V^1_{per,0}(\Omega_1)}\Psi_k\Big).
\end{aligned}
\end{equation}
The following properties of the projection follow by standard Hilbert space theory and standard regularity theory for Stokes system:
\begin{itemize}
\item For any $m\in \mathbb{N}$, if (additionally) $\phi \in H^m$,$\norm{\Pscr_N\phi}_{H^m(0,L)}\leq c\norm{\phi}_{H^m(0,L)}$ and $\Pscr_N\phi\to \phi$ in $H^m(0,L)$.
\item We can decompose $\tilde{B}^{\intercal}\Phi=\Phi_0+\Phi_1=(\tilde{B}^{\intercal}\Phi-\Phi_1)+\Phi_1$, where 
\begin{equation*}
\left\{\begin{aligned}
&-\Delta\Phi_1+\nabla p_1=0&\text{in}\quad \Om_1,\\
&\div  \Phi_1=0 &\text{in}\quad \Om_1,\\
& \Phi_1=
\left\{\begin{aligned}
&\phi\m {\rm e_2}\quad &(0, L)\times\{z=1\},\\
& 0 \quad & (0, L)\times\{z=0\}.
\end{aligned}\right.
\\
&\Phi_1(\cdot,z)=\Phi_1(\cdot+L,z)\quad &\text{for all }z\in [0,1].
\end{aligned}
\right.
\end{equation*}
\item We find that $\Phi_1$ is approximated by its boundary values. In particular, by Stokes theory and the properties of the eigenfunctions, we find that
\[
\norm{\sum_{k\leq N,k\text{ odd}}P^k(\phi)\Psi_k-\Phi_1}_{H^{\frac{5}{2}}(\Omega_1)}\leq c \norm{\sum_{k\leq N,k\text{ odd}}P^k(\phi)\psi_k - \phi}_{H^2(0,L)}\to 0\text{ with }N\to \infty.
\]
\item If (additionally) $ \Phi\in H^2$, we find that $\Phi_0$ is also approximated in $H^2$ by the second projection part, i.e. for $k$ even. For that observe that
\[
(\Psi_k,\tilde{B}^{\intercal}\Phi)_{V^1_{per,0}(\Omega_1)}
=(\Psi_k,\Phi_0)_{V^1_{per,0}(\Omega_1)}+\underbrace{(\Psi_k,\Phi_1)_{V^1_{per,0}(\Omega_1)}}_{=0},
\]
where we used the equation of $\Phi_1$ and the fact that $\Psi_k$ has zero boundary values in $\{z=1\}$.
Hence we get by the properties of the basis 
$\{\Psi_k\}_{k\text{ even}}$, that
\[
\norm{\sum_{k\leq N,k\text{ even}}(\Psi_k,\tilde{B}^{\intercal}\Phi)_{V^1_{per,0}(\Omega_1)}\Psi_k-\Phi_0}_{H^2(\Omega_1)}\to 0\text{ with }N\to \infty.
\]
\item Combining the arguments above we find that
\[
\norm{\Pscr_N\Phi}_{H^2(\Omega_1)}\leq C\norm{\Phi}_{H^2(\Omega_1)}+c\norm{\phi}_{H^2(0,L)}.
\]
Moreover, $\Pscr_N\Phi\to \Phi$ in $H^2(\Omega_1)$ as $N\to \infty$. Note that based on the similar but simpler analysis, the above estimate for $\Pscr_N\Phi$ holds also for the $H^1$ and $L^2$-norm.
\end{itemize}

We make the following ansatz, for every fixed $N\in\nline$:
\begin{equation}\label{ansatz}
\begin{aligned}
\hat u_{N}(t,x,z):=\tilde{B}^{-\intercal}\sum_{k=1}^N
\alpha^k_N(t)\Psi_k(x,z) \quad\text{ for all }t\in(0, T),\\
g_{N}(t,x):=\int_0^t\sum_{k=1}^N\alpha_N^k(\tau)\psi_k(x)\dd 
\tau+\Pscr_Nh_0\quad\text{ for all }t\in(0, T).
\end{aligned}
\end{equation}

From the construction \rfb{ansatz}, we see that $\hat u_N(t,x,1)=\prt_tg_N\m{\rm e_2}$ for $(t, x)\in (0, T)\times (0, L)$.

We construct now the solution $(\hat u_N, g_N)$ in four steps.


{\bf Step 1: Existence of ${\boldsymbol \alpha_N}$ for a given geometry.}
Assume that $\tilde{h}\in C^2([0,T]\times [0,L]; [h_{\min},h_{\max}])$ is the given geometry and we still use the notation $\tilde{B}=B_{\tilde{h}}, \tilde{A}=A_{\tilde{h}}$ and $\tilde{\chi}=\chi_{\tilde{h}}$.

In what follows we seek for the a couple of discrete solutions 
$(\hat u_{N}, g_{N})$ of the form \rfb{ansatz} 
with time-dependent coefficients ${\boldsymbol \alpha_N}=(\alpha_N^k)
_{k=1}^N$, which solve the following discrete equation, for $k=1, 
\cdots, N$:
\begin{align}
\begin{aligned}
&\left\langle \rho_f\prt_t\tilde{h}\prt_t\hat u_{N}, 
\tilde{B}^{-\intercal}\Psi_k\right\rangle+\left\langle 
\rho_f\tilde{h}\prt_t^2\hat u_{N}, \tilde{B}^{-\intercal}
\Psi_k\right\rangle+\left\langle \rho_f\tilde h
\prt_t\hat u_{N}, \prt_t\tilde{B}^{-\intercal}
\Psi_k\right\rangle\\
&+\frac{1}{2}\left\langle \rho_f\prt_t\hat u_{N}\left(\tilde B\nabla\right)\hat u_{N}, \tilde{B}^{-\intercal}
\Psi_k\right\rangle+\frac{1}{2}\left\langle \rho_f\hat u_{N}
\left(\prt_t\tilde{B}\nabla\right)
\hat u_{N}, \tilde{B}^{-\intercal}\Psi_k\right\rangle\\
&+\frac{1}{2}\left\langle\rho_f\hat u_{N}\left(\tilde{B}
\nabla\right)\prt_t\hat u_{N}, \tilde{B}^{-\intercal}\Psi_k
\right\rangle+\frac{1}{2}\left\langle \rho_f\hat u_{N}
\left(\tilde{B}\nabla\right)\hat u_{N}, \prt_t\tilde{B}
^{-\intercal}\Psi_k\right\rangle\\
&-\frac{1}{2}\left\langle \rho_f\prt_t\hat u_{N}\left(\tilde{B}
\nabla\right)\left(\tilde{B}^{-\intercal}
\Psi_k\right), \hat u_{N}\right\rangle-\frac{1}{2}\left\langle
\rho_f\hat u_{N}\left(\prt_t\tilde{B}\nabla\right)\left(\tilde B^{-\intercal}\Psi_k\right), \hat u_{N}\right\rangle\\
&-\frac{1}{2}\left\langle \rho_f\hat u_{N}\left(\tilde{B}
\nabla\right)\left(\prt_t\tilde{B}^{-\intercal}\Psi_k\right), 
\hat u_{N}\right\rangle-\frac{1}{2}\left\langle \rho_f\hat u_{N}\left(\tilde{B}\nabla\right)\left(\tilde{B}^{-\intercal}
\Psi_k\right), \prt_t\hat u_{N}\right\rangle\\
&+\frac{1}{2}\left\langle \rho_f\prt_t^2\tilde{h}\prt_tg_{N}, 
\psi_k\right\rangle
+\frac{1}{2}\left\langle \rho_f\prt_t\tilde h\prt_t^2g_{N}, 
\psi_k\right\rangle
\\
&-\left\langle\rho_f\prt_t^2\tilde{\chi}(\tilde{B}\nabla)\hat 
u_{N}, \tilde{B}^{-\intercal}\Psi_k\right\rangle-\left\langle
\rho_f\prt_t\tilde{\chi}(\prt_t\tilde{B}\nabla)\hat u_{N},
\tilde{B}^{-\intercal}\Psi_k\right\rangle\\
&-\left\langle \rho_f\prt_t\tilde{\chi}(\tilde{B}\nabla)\prt_t
\hat u_{N}, \tilde{B}^{-\intercal}\Psi_k\right\rangle-\left
\langle \rho_f\prt_t\tilde{\chi}(\tilde{B}\nabla)\hat u_{N},
\prt_t\tilde{B}^{-\intercal}\Psi_k\right\rangle\\
&+\left\langle \mu (\prt_t\tilde{A}\nabla)\hat u_{N},
\nabla(\tilde{B}^{-\intercal}\Psi_k)\right\rangle+\left\langle
\mu(\tilde{A}\nabla)\prt_t\hat u_{N}, \nabla(\tilde{B}
^{-\intercal}\Psi_k)\right\rangle\\
&+\left\langle \mu(\tilde{A}\nabla)
\hat u_{N}, \nabla(\prt_t\tilde{B}^{-\intercal}\Psi_k)
\right\rangle+\left\langle\rho_s\prt_t^3g_{N}, \psi_k\right\rangle-\left
\langle\beta
\prt_t\prt_x^2g_{N}, \psi_k\right\rangle\\
&+\left\langle\alpha
\prt_t\prt_x^2g_{N}, \prt_x^2\psi_k\right\rangle
=0,
\end{aligned}
\label{weaktimedisctete}
\end{align}
where $(\Psi_k, \psi_k)$ are introduced in \rfb{basispsik}.  

According to the definition of $\hat u_{N}$ and $g_{N}$ 
in \rfb{ansatz}, we have
\begin{equation}\label{express}
\begin{aligned}
&\prt_t\hat u_{ N}=\sum_{j=1}^N\prt_t\tilde{B}^{-\intercal}
{\alpha_N^j}(t)\Psi_j+\sum_{j=1}^N\tilde{B}^{-\intercal}
{\alpha_N^j}'(t)\Psi_j,\\
&\prt_t^2\hat u_{ N}=\sum_{j=1}^N\prt_t^2\tilde{B}
^{-\intercal}{\alpha_N^j}(t)\Psi_j
+\sum_{j=1}^N2\prt_t\tilde{B}^{-\intercal}{\alpha_N^j}'(t)\Psi_j
+\sum_{j=1}^N\tilde{B}^{-\intercal}{\alpha_N^j}''(t)\Psi_j, \\
&\nabla\left(\prt_t\hat u_{ N}\right)=\sum_{j=1}^N{\alpha_N^j}'(t)
\nabla\left(\tilde{B}^{-\intercal}\Psi_j\right)+\sum_{j=1}^N
{\alpha_N^j}(t)\nabla\left(\prt_t\tilde{B}^{-\intercal}\Psi_j\right), \\
&\prt_tg_{ N}=\sum_{j=1}^N{\alpha_N^j}(t)\psi_j, \quad \prt_t^2
g_{ N}=\sum_{j=1}^N{\alpha_N^j}'(t)\psi_j, \quad \prt_t^3
g_{ N}=\sum_{j=1}^N{\alpha_N^j}''(t)\psi_j.
\end{aligned}
\end{equation}

We substitute \rfb{express} in the system \rfb{weaktimedisctete} and obtain 
the second-order nonlinear ODE system for ${\boldsymbol \alpha_N}
=(\alpha_N^k)_{k=1}^N$ as follows:
\begin{equation}\label{ODE}
\Ascr(t){\boldsymbol \alpha_N}''(t)+\Bscr(t){\boldsymbol \alpha_N}'(t)+
\Cscr(t){\boldsymbol \alpha_N}(t)+\left(\Dscr(t)\cdot{\boldsymbol \alpha_N}'(t)
\right){\boldsymbol \alpha_N}(t)+\left(\Escr(t)\cdot{\boldsymbol \alpha_N}(t)
\right){\boldsymbol \alpha_N}(t)=0,
\end{equation}
where the coefficients matrices $\Ascr=(\Ascr_{j,k})_{j,k=1}^N$, $\Bscr=(\Bscr_{j,k})_{j,k=1}^N$, 
$\Cscr=(\Cscr_{j,k})_{j,k=1}^N$, $\Dscr=(\Dscr_{j,k}^l)_{j,k, l=1}^N$ and 
$\Escr=(\Escr_{j,k}^l)_{j,k,l=1}^N$ are
\begin{equation*}
\Ascr_{j,k}=\left\langle\rho_f\tilde{h}\tilde B^{-\intercal}
\Psi_j, \tilde{B}^{-\intercal}\Psi_k\right\rangle+\left\langle\rho_s\psi_j, 
\psi_k\right\rangle,
\end{equation*}

\begin{equation*}
\begin{aligned}
\Bscr_{j,k}&=\left\langle\rho_f\prt_t\tilde{h}\tilde{B}^{-\intercal}
\Psi_j,\tilde{B}^{-\intercal}\Psi_k\right\rangle+2\left\langle\rho_f\tilde 
h\prt_t\tilde{B}^{-\intercal}\Psi_j, \tilde{B}^{-\intercal}\Psi_k
\right\rangle\\
&\quad+\left\langle\rho_f\tilde h\tilde{B}^{-\intercal}\Psi_j, \prt_t
\tilde{B}^{-\intercal}\Psi_k\right\rangle-\left\langle\rho_f\prt_t
\tilde{\chi}\tilde{B}\nabla(\tilde{B}^{-\intercal}\Psi_j), 
\tilde{B}^{-\intercal}\Psi_k\right\rangle\\
&\quad+\mu\left\langle \tilde{A}\nabla(\tilde{B}^{-\intercal}\Psi_j),
\nabla(\tilde{B}^{-\intercal}\Psi_k)\right\rangle
+\frac{1}{2}\left\langle\rho_f
\prt_t\tilde h\psi_j, \psi_k\right\rangle,
\end{aligned}
\end{equation*}

\begin{equation*}
\begin{aligned}
\Cscr_{j,k}&=\left\langle\rho_f\prt_t\tilde h\prt_t\tilde{B}^
{-\intercal}\Psi_j, \tilde{B}^{-\intercal}\Psi_k\right\rangle+\left\langle
\rho_f\tilde h\prt_t^2\tilde{B}^{-\intercal}\Psi_j, \tilde{B}^{-\intercal}
\Psi_k\right\rangle\\
&\quad+\left\langle\rho_f\tilde h\prt_t\tilde{B}^{-\intercal}\Psi_j,
\prt_t\tilde{B}^{-\intercal}\Psi_k\right\rangle-\left\langle\rho_f\prt_t^2
\tilde{\chi}\tilde{B}\nabla(\tilde{B}^{-\intercal}\Psi_j), 
\tilde{B}^{-\intercal}\Psi_k\right\rangle\\
&\quad-\left\langle\rho_f\prt_t\tilde{\chi}\prt_t \tilde{B}\nabla
(\tilde{B}^{-\intercal}\Psi_j),\tilde{B}^{-\intercal}\Psi_k\right
\rangle-\left\langle\rho_f\prt_t\tilde{\chi}\tilde{B}\nabla(\prt_t
\tilde{B}^{-\intercal}\Psi_j), \tilde{B}^{-\intercal}\Psi_k\right
\rangle\\
&\quad-\left\langle\rho_f\prt_t\tilde{\chi}\tilde{B}\nabla(\tilde B
^{-\intercal}\Psi_j),\prt_t\tilde{B}^{-\intercal}\Psi_k\right\rangle+
\left\langle\mu\prt_t\tilde{A}\nabla(\tilde{B}^{-\intercal}\Psi_j),
\nabla(\tilde{B}^{-\intercal}\Psi_k)\right\rangle\\
&\quad+\left\langle\mu \tilde{A}\nabla(\prt_t\tilde{B}^{-\intercal}\Psi_j),
\nabla(\tilde{B}^{-\intercal}\Psi_k)\right\rangle+\left\langle\mu \tilde{A}
\nabla(\tilde{B}^{-\intercal}\Psi_j), \nabla(\prt_t\tilde{B}
^{-\intercal}\Psi_k)\right\rangle\\
&\quad+\left\langle\beta\prt_x\psi_j, \prt_x\psi_k\right\rangle+\left\langle\alpha
\prt_x^2\psi_j, \prt_x^2\psi_k\right\rangle+\frac{1}{2}\left\langle\rho_f\prt_t^2
\tilde h\psi_j, \psi_k\right\rangle,
\end{aligned}
\end{equation*}

\begin{equation*}
\begin{aligned}
D_{j,k}^l&=\frac{1}{2}\left\langle\rho_f\tilde{B}^{-\intercal}\Psi_l
\tilde{B}\nabla(\tilde{B}^{-\intercal}\Psi_j), \tilde{B}
^{-\intercal}\Psi_k\right\rangle+\frac{1}{2}\left\langle\rho_f\tilde{B}^{-\intercal}\Psi_j\m \tilde{B}
\nabla(\tilde{B}^{-\intercal}\Psi_l), \tilde{B}^{-\intercal}\Psi_k
\right\rangle\\
&\quad-\frac{1}{2}\left\langle\rho_f\tilde{B}^{-\intercal}\Psi_l \tilde{B}
\nabla(\tilde{B}^{-\intercal}\Psi_j), \tilde{B}^{-\intercal}\Psi_k
\right\rangle-\frac{1}{2}\left\langle\rho_f\tilde{B}^{-\intercal}\Psi_j \tilde{B}
\nabla(\tilde{B}^{-\intercal}\Psi_k), \tilde{B}^{-\intercal}\Psi_l
\right\rangle,
\end{aligned}
\end{equation*}

\begin{equation*}
\begin{aligned}
\Escr_{j,k}^l&=\frac{1}{2}\left\langle\rho_f\prt_t\tilde{B}^{-\intercal}
\Psi_l \tilde{B}\nabla(\tilde{B}^{-\intercal}\Psi_j), \tilde{B}
^{-\intercal}\Psi_k\right\rangle+\frac{1}{2}\left\langle\rho_f\tilde{B}^{-\intercal}\Psi_j \prt_t \tilde{B}
\nabla(\tilde{B}^{-\intercal}\Psi_l), \tilde{B}^{-\intercal}\Psi_k
\right\rangle\\
&\quad+\frac{1}{2}\left\langle\rho_f\tilde{B}^{-\intercal}\Psi_j \tilde{B}
\nabla(\prt_t\tilde{B}^{-\intercal}\Psi_l), \tilde{B}^{-\intercal}
\Psi_k\right\rangle+\frac{1}{2}\left\langle\rho_f\tilde{B}^{-\intercal}\Psi_j \tilde{B}
\nabla(\tilde{B}^{-\intercal}\Psi_l), \prt_t\tilde{B}^{-\intercal}
\Psi_k\right\rangle\\
&\quad-\frac{1}{2}\left\langle\rho_f\prt_t\tilde{B}^{-\intercal}\Psi_l \tilde{B}
\nabla(\tilde{B}^{-\intercal}\Psi_j), \tilde{B}^{-\intercal}\Psi_k
\right\rangle-\frac{1}{2}\left\langle\rho_f\tilde{B}^{-\intercal}\Psi_j\prt_t \tilde{B}
\nabla(\tilde{B}^{-\intercal}\Psi_l), \tilde{B}^{-\intercal}\Psi_k
\right\rangle\\
&\quad-\frac{1}{2}\left\langle\rho_f\tilde{B}^{-\intercal}\Psi_j \tilde{B}
\nabla(\prt_t\tilde{B}^{-\intercal}\Psi_l), \tilde{B}^{-\intercal}
\Psi_k\right\rangle-\frac{1}{2}\left\langle\rho_f\tilde{B}^{-\intercal}\Psi_j \tilde{B}
\nabla(\tilde{B}^{-\intercal}\Psi_k), \prt_t\tilde{B}^{-\intercal}
\Psi_l\right\rangle.
\end{aligned}
\end{equation*}
We shall show in what follows that for given initial data ${\boldsymbol 
	\alpha_N}(0)$ and ${\boldsymbol \alpha_N}'(0)$, the 
differential equation \rfb{ODE} allows a unique solution ${\boldsymbol \alpha_N}$. 
The choice of the initial value ${\boldsymbol \alpha_N}'(0)$ will be discussed in Step~2 below.
We see that the matrix $\Ascr$ is obviously symmetric and for every 
$\xi\in\rline^N\setminus\{0\}$, we have
\begin{equation}\label{invertA}
\begin{aligned}
\Ascr\xi\cdot\xi&=\rho_f\tilde{h}\sum_{j,k=1}^N\xi_j\xi_k\left\langle
\tilde B^{-\intercal}\Psi_j, \tilde B^{-\intercal}\Psi_k
\right\rangle+\rho_s\sum_{j,k=1}^N\xi_j\xi_k
\left\langle\psi_j, \psi_k\right\rangle\\
&\geq C\left\langle \tilde B^{-\intercal}\sum_{j=1}^N\xi_j\Psi_j,
\tilde B^{-\intercal}\sum_{k=1}^N\xi_k\Psi_k\right\rangle+
c\sum_{k=1}^N\left\lVert\xi_k\psi_k\right\rVert^2\\
&> 0.
\end{aligned}
\end{equation}
Recalling that in the given geometry $\tilde h\in C^2([0, T]\times [0, L]; [h_{\min}, h_{\max}])$, all the coefficients in \rfb{ODE} are continuous in $t$ and 
all the non-linear quantites in \rfb{ODE} are locally Lipschitz continuous in 
${\boldsymbol \alpha_N}$ and ${\boldsymbol \alpha_N}'$. According to the 
Picard-Lindel\"of theorem, there is a unique solution of $\rfb{ODE}$ in short time. 
Therefore, we obtain a solution $(\hat u_{ N}, g_{ N})$ of 
\rfb{weaktimedisctete} which is given in form of \rfb{ansatz}.

{\bf Step 2: Choosing the initial values for the time-derivative.} For the initial data, we take
\begin{equation}\label{init}
\begin{aligned}
&\hat u_{ N}(0, x,z)=\Pscr_N\hat u_0,\quad g_{ N}(0,x)
=\Pscr_Nh_0, \quad (\prt_tg_{ N})(0,x)=\Pscr_Nh_1,
\end{aligned}
\end{equation}
where the projection $\Pscr_N$ has been defined in \rfb{projec}. 
From the definition of $\hat u_{ N}$ in 
\rfb{ansatz}, we note that actually ${\boldsymbol \alpha_N}(0)$ is determined 
by $\hat u_0$ and $h_0$, while ${\boldsymbol \alpha_N}'(0)$ is free. 
With the assumption \rfb{init} we choose the initial value 
${\boldsymbol \alpha_N}'(0)$ by considering the following equality:
\begin{equation}\label{initialdata}
\begin{aligned}
&\left\langle\rho_fh_0(\prt_t\hat u_{ N})(0),  B
^{-\intercal}_{h_0}\Psi_k\right\rangle+\frac{1}{2}\left\langle\rho_f\Pscr_N\hat 
u_0\left(B_{h_0}
\nabla\right)\Pscr_N\hat u_0, B^{-\intercal}_{h_0}\Psi_k\right\rangle\\
&-\frac{1}{2}\left\langle\rho_f\Pscr_N\hat u_0\left(B_{h_0}\nabla\right)(B
^{-\intercal}_{h_0}\Psi_k), \Pscr_N\hat u_0\right\rangle
+\frac{1}{2}\left\langle\rho_fh_1\Pscr_N h_1, \psi_k\right\rangle\\
&-\left\langle\rho_f\prt_t\chi_{h_0}\left(B_{h_0}\nabla\right)\Pscr_N\hat u_0,
B^{-\intercal}_{h_0}\Psi_k\right\rangle
+\mu\left\langle\left(A_{h_0}\nabla\right)\Pscr_N\hat u_0, \nabla\left(B^
{-\intercal}_{h_0}\Psi_k\right)\right\rangle\\
&+\rho_s\left\langle(\prt_t^2g_{ N})(0), \psi_k\right\rangle-\beta
\left\langle\prt_x^2\Pscr_Nh_0, \psi_k\right\rangle
+\alpha\left\langle\prt_x^4\Pscr_Nh_0, \psi_k\right\rangle
=0,
\end{aligned}
\end{equation}
where the projection operator $\Pscr_N$ has been introduced in \rfb{projec}.
Using the expression in \rfb{express} for $(\prt_t\hat u_{ N})(0)$ 
and $(\prt_t^2 g_{ N})(0)$, we immediately obtain from 
\rfb{initialdata} the equation for ${\boldsymbol \alpha_N}'(0)$:
\begin{equation}\label{detealpha}
M{\boldsymbol \alpha_N}'(0)=F({\boldsymbol \alpha_N}(0), \hat u_0, h_0, h_1),
\end{equation}
where the coefficient matrix $M=(M_{j,k})_{j,k=1}^N$ is given by
$$M_{j,k}=\rho_fh_0\left\langle B^{-\intercal}_{h_0}\Psi_j, B
^{-\intercal}_{h_0}\Psi_k\right\rangle+\rho_s\left\langle\psi_j, 
\psi_k\right\rangle.$$
By a similar analysis argument of $\Ascr$ in \rfb{invertA}, we know that 
the matrix $M$ is invertible. Therefore, the initial value 
${\boldsymbol \alpha_N}'(0)$ is uniquely determined by the equality \rfb{detealpha}.

Now we consider obtaining the initial value $(\prt_t\hat u_{ N})(0)$ 
and $(\prt_t^2 g_{ N})(0)$ from the assumption of the regularity 
of $\hat u_0$, $h_0$ and $h_1$. For this, we have the following proposition.

\begin{prop}\label{initprop}
	With the assumption in \rfb{init}, let the initial data $\hat u_0$, $h_0$ 
	and $h_1$ satisfy
	\begin{equation}\label{initialneed}
	\hat u_0\in H^2(\Om_1),\quad h_0\in H^4(0, L), \quad h_1\in H^2(0, L),
	\end{equation}
\end{prop}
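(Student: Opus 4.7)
The strategy is to obtain the bound by testing equation~\rfb{initialdata} against the discrete initial time-derivative itself. Concretely, I would multiply the $k$-th equation by $\alpha_N^{k\prime}(0)$, sum over $k\in\{1,\dots,N\}$, and identify the test function as the pair $\bigl(B_{h_0}^{-\intercal}\sum_k\alpha_N^{k\prime}(0)\Psi_k,\,\sum_k\alpha_N^{k\prime}(0)\psi_k\bigr)=\bigl((\partial_t\hat u_N)(0),(\partial_t^2 g_N)(0)\bigr)$. By the construction of the basis in~\rfb{basispsik}, this pair is admissible: the fluid component is solenoidal with respect to $B_{h_0}^{\intercal}$, vanishes on $\{z=0\}$, is $L$-periodic, and matches the structure trace $(\partial_t^2 g_N)(0)\m{\rm e_2}$ on $\{z=1\}$. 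The summation produces, on the left, exactly the two coercive terms
\[
\rho_f\int_{\Omega_1}h_0\bigl|(\partial_t\hat u_N)(0)\bigr|^2\,\dd\bz\;+\;\rho_s\int_0^L\bigl|(\partial_t^2 g_N)(0)\bigr|^2\,\dd x,
\]
which, by $h_0\geq \delta$, dominate $\norm{(\partial_t\hat u_N)(0)}_{L^2(\Omega_1)}^2+\norm{(\partial_t^2g_N)(0)}_{L^2(0,L)}^2$.

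The remaining terms become the right-hand side and involve only the data $\mathcal{P}_N\hat u_0$, $\mathcal{P}_Nh_0$, $\mathcal{P}_Nh_1$ together with the test function. I would handle them as follows. First, the viscous term $\mu\langle (A_{h_0}\nabla)\mathcal{P}_N\hat u_0,\nabla(B_{h_0}^{-\intercal}\Psi)\rangle$ is treated by integration by parts: after pulling the divergence onto $\mathcal{P}_N\hat u_0$ one obtains a volume integral bounded by $C(h_{\min},\norm{h_0}_{W^{1,\infty}})\norm{\hat u_0}_{H^2}\norm{(\partial_t\hat u_N)(0)}_{L^2}$, while the boundary term on $\{z=1\}$ is balanced by the elastic boundary contribution (the term $-2\mu\int {\rm e_2}\cdot D(u_0)(\cdot,h_0)(-\partial_x h_0{\rm e_1}+{\rm e_2})(\partial_t^2h)(0)\dd x$ appearing in the formal calculation). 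Second, the elastic terms give $-\beta\langle\partial_x^2\mathcal{P}_Nh_0,(\partial_t^2g_N)(0)\rangle+\alpha\langle\partial_x^4\mathcal{P}_Nh_0,(\partial_t^2g_N)(0)\rangle$, which are directly bounded by $(\beta\norm{h_0}_{H^2}+\alpha\norm{h_0}_{H^4})\norm{(\partial_t^2g_N)(0)}_{L^2}$ using the $H^s$-stability of $\mathcal{P}_N$.

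Third, the convective term $\tfrac12\langle\rho_f\mathcal{P}_N\hat u_0(B_{h_0}\nabla)\mathcal{P}_N\hat u_0,(\partial_t\hat u_N)(0)\rangle$ (and its skew-symmetric counterpart, which together with the $\tfrac12\rho_fh_1\mathcal{P}_Nh_1$ boundary contribution amounts to the full transport term) is controlled via the 2D Sobolev embedding $H^2(\Omega_1)\hookrightarrow L^\infty(\Omega_1)$: it is estimated by $C\norm{\hat u_0}_{H^2}^2\norm{(\partial_t\hat u_N)(0)}_{L^2}+C\norm{h_1}_{L^\infty}^2\norm{(\partial_t^2g_N)(0)}_{L^2}$, where $h_1\in H^2(0,L)\hookrightarrow L^\infty(0,L)$ suffices in one dimension. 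The geometric term $\rho_f\partial_t\chi_{h_0}(B_{h_0}\nabla)\mathcal{P}_N\hat u_0$ depends on $h_1$ and $\nabla\hat u_0$ only and is similarly estimated. Collecting all bounds, applying Cauchy--Schwarz and Young's inequality to absorb $\varepsilon$-factors of $\norm{(\partial_t\hat u_N)(0)}_{L^2}$ and $\norm{(\partial_t^2g_N)(0)}_{L^2}$ to the left-hand side, yields
\[
\norm{(\partial_t\hat u_N)(0)}_{L^2(\Omega_1)}^2+\norm{(\partial_t^2g_N)(0)}_{L^2(0,L)}^2\leq C\bigl(h_{\min},\norm{\hat u_0}_{H^2(\Omega_1)},\norm{h_0}_{H^4(0,L)},\norm{h_1}_{H^2(0,L)}\bigr),
\]
uniformly in $N$, together with convergence $(\partial_t\hat u_N)(0)\to(\partial_t\hat u)(0)$ and $(\partial_t^2g_N)(0)\to(\partial_t^2h)(0)$ coming from the density of the Galerkin basis and stability of $\mathcal{P}_N$.

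The main technical obstacle is ensuring that the boundary trace on $\{z=1\}$ produced by integration by parts in the viscous term cancels exactly against the traction contribution in the beam equation. This is where the compatibility of the basis couple $(\Psi_k,\psi_k)$ matters: the relation $\Psi_k(x,1)=\psi_k(x)\m{\rm e_2}$ built into~\rfb{basispsik} together with $\Psi_k$ being divergence-free and $B_{h_0}^{-\intercal}\Psi_k$ satisfying $\div(B_{h_0}^{\intercal}B_{h_0}^{-\intercal}\Psi_k)=\div\Psi_k=0$ is precisely what makes the combined boundary term collapse to the elastic boundary datum, so no pressure information is needed to identify $(\partial_t\hat u_N)(0)$. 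Once this cancellation is in place, everything else reduces to Sobolev embeddings and the stability of $\mathcal{P}_N$.
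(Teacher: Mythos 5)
Your overall strategy matches the paper's: multiply \eqref{initialdata} by $\alpha_N^{k\prime}(0)$, sum over $k$, integrate by parts where the test function appears under derivatives, and bound the remaining terms by the data. But two steps are not right as stated.

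First, $B_{h_0}^{-\intercal}\sum_k\alpha_N^{k\prime}(0)\Psi_k$ is \emph{not} $(\partial_t\hat u_N)(0)$. From \eqref{express} one gets
\begin{equation*}
\sum_{k=1}^N B_{h_0}^{-\intercal}\alpha_N^{k\prime}(0)\Psi_k
=B_{h_0}^{-\intercal}\partial_t\big(B_{h_0}^\intercal\hat u_N(0)\big)
=(\partial_t\hat u_N)(0)+B_{h_0}^{-\intercal}\partial_tB_{h_0}^\intercal\hat u_N(0)
=(\partial_t\hat u_N)(0)+G(0),
\end{equation*}
a fact the paper records explicitly. So the summation does \emph{not} directly produce the two coercive quadratic terms on the left; the $\rho_f h_0(\partial_t\hat u_N)(0)$ slot is paired against $(\partial_t\hat u_N)(0)+G(0)$, and the cross term $\rho_f\int h_0(\partial_t\hat u_N)(0)\cdot G(0)\,\dd\bz$ has to be identified and controlled as a (lower-order) error term using $h_1\in H^2$ and $\hat u_0\in H^2$. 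Your claim that the test pair equals $\big((\partial_t\hat u_N)(0),(\partial_t^2g_N)(0)\big)$ is simply false and must be corrected before any coercivity statement is drawn.

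Second, there is no cancellation of the viscous boundary term against a ``traction contribution in the beam equation''. Equation \eqref{initialdata} is the coupled Galerkin weak form; it has no separate traction term to cancel against. After integration by parts the trace $\mu\int_0^L\big[A_{h_0}\nabla\Pscr_N\hat u_0\,B_{h_0}^{-\intercal}\Psi_k\big](z=1)\cdot{\rm e_2}\,\dd x$ simply persists, and upon summing it becomes a boundary integral against $(\partial_t^2g_N)(0)$ which is estimated directly, not cancelled (this is exactly the $-2\mu\int_0^L{\rm e_2}\cdot D(u_0)\ldots(\partial_t^2h)(0)\,\dd x$ term that the paper keeps on the right-hand side of its formal computation). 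The role of the compatibility $\Psi_k(x,1)=\psi_k(x){\rm e_2}$ is merely to make this boundary integral a well-defined pairing with $(\partial_t^2g_N)(0)$. Similarly, the reason no pressure appears is already built into \eqref{initialdata} --- pressure was eliminated in \eqref{weakformpsi} by the constraint $\div\Psi_k=0$ --- which has nothing to do with the boundary term. With these two points corrected your argument coincides with the paper's.
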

then we have $(\prt_t\hat u_{ N})(0)\in L^2(\Om_1)$ and 
$(\prt_t^2g_{ N})(0)\in L^2(0, L)$.

\begin{proof}
	Still using the structure in \rfb{express} and recalling the projection 
	$\Pscr_N$ defined in \rfb{projec}, we first take an integration 
	by parts with respect to space for the third and the sixth terms in 
	\rfb{initialdata}:
	\begin{equation*}
	\begin{aligned}
	&-\left\langle\Pscr_N\hat u_0\left(B_{h_0}\nabla\right)(B
	^{-\intercal}_{h_0}\Psi_k), \Pscr_N\hat u_0\right\rangle\\
	&=-\int_{\Om_1}\div
	\left((B^{\intercal}_{h_0}\Pscr_N\hat u_0)\otimes 
	B^{-\intercal}_{h_0}\Psi_k\right)\cdot\Pscr_N\hat u_0\m\dd\bz\\
	&=-\int_0^L|\Pscr_N h_1|^2\psi_k\m\dd x+\int_{\Om_1}B^{-\intercal}_{h_0}\Psi_k\cdot \Pscr_N\hat 
	u_0(B_{h_0}\nabla)\Pscr_N\hat u_0\m\dd\bz,
	\end{aligned}
	\end{equation*}
	and
	\begin{equation*}
	\begin{aligned}
	&\left\langle\left(A_{h_0}\nabla\right)\hat u_0:\nabla\left(B^
	{-\intercal}_{h_0}\Psi_k\right)\right\rangle\\
	&=\int_0^L\left[A_{h_0}\nabla\hat u_0 B^{-\intercal}_{h_0}\Psi_k
	\right](z=1)\cdot {\rm e_2}\dd x-\int_{\Om_1}\div(A_{h_0}\nabla\hat u_0)
	\cdot B^{-\intercal}_{h_0}\Psi_k\m\dd\bz.
	\end{aligned}
	\end{equation*}
	
	Multiply the equality \rfb{initialdata} by $(\alpha_{N}^k)'(0)$ and sum 
	over $k=1, \cdots, N$. Note that we have
	\begin{align*}
	&\sum_{k=1}^N{\alpha_N^k}'(0)\psi_k=(\prt_t^2g_{ N})(0),
	\\
	&\sum_{k=1}^N B_{h_0}^{-\intercal}{\alpha_N^k}'(0)\Psi_k
	=
	B_{h_0}^{-\intercal}\prt_t\left(B_{h_0}^\intercal
	\hat u_{ N}(0)\right)=B_{h_0}^{-\intercal}\prt_tB_{h_0}
	^\intercal\hat u_{ N}(0)+(\prt_t\hat u_{ N})(0).
	\end{align*}
	Under the assumption \rfb{initialneed}, with \rfb{init} and \rfb{express}, we derive that
	\begin{equation*}
	\lVert(\prt_t\hat u_{ N})(0)\rVert_{L^2(\Om_1)}^2+\lVert(\prt_t^2
	g_{ N})(0)\rVert_{L^2(0, L)}^2\leq c(C_0)\left(\lVert\hat u_0\rVert_
	{H^2(\Om_1)}^2+ \lVert h_0\rVert_{H^4(0, L)}^2+\lVert h_1\rVert_{H^2(0,L)}^2\right),
	\end{equation*} 
	which ends the proof. In the above estimate, we used the properties of the projection discussed below \rfb{projec} for initial data.
\end{proof}


{\bf Step 3: Energy estimate for the discrete decoupled solution.}
Now we derive the energy estimate to extend the existence interval of 
${\boldsymbol \alpha_N}$. Taking integration of \rfb{weaktimedisctete} 
with respect to time variable and recalling the equation \rfb{initialdata}, 
we thereby obtain
\begin{equation}\label{weakform1st}
\begin{aligned}
&\int_{\Om_1}\rho_f\tilde{h}\prt_t\hat u_{ N}\cdot 
\tilde{B}^{-\intercal}\Psi_k\m\dd\bz+\frac{1}{2}\int_{\Om_1}\rho_f\hat
u_{ N}(\tilde{B}\nabla)\hat u_{ N}\cdot 
\tilde{B}^{-\intercal}\Psi_k\m\dd\bz\\
&-\frac{1}{2}\int_{\Om_1}\rho_f\hat u_{ N}(\tilde{B}\nabla)
\left(\tilde{B}^{-\intercal}\Psi_k\right)\cdot\hat u_{ N}\m\dd\bz
+\frac{1}{2}\int_0^L\rho_f\prt_t\tilde{h}\prt_tg_{ N}\psi_k\dd x\\
&-\int_{\Om_1}\rho_f\prt_t\tilde{\chi}(\tilde{B}\nabla)
\hat u_{ N}\cdot \tilde{B}^{-\intercal}\Psi_k\m\dd\bz
+\mu\int_{\Om_1}(\tilde{A}\nabla)\hat u_{ N}:\nabla
\left(\tilde{B}^{-\intercal}\Psi_k\right)\m\dd\bz\\
&+\rho_s\int_0^L\prt_t^2g_{ N}\psi_k\m\dd x-\beta\int_0^L
\prt_x^2g_{ N}\psi_k\m\dd x
+\alpha\int_0^L\prt_x^2g_{ N}\prt_x^2\psi_k\m\dd x
=0.
\end{aligned}
\end{equation}
\begin{prop}\label{firstenergy}
	For the solution to the ODE we have the following energy estimates:
	\begin{equation}\label{twoener1new}
	\begin{aligned}
	&h_{\min}\rho_f\lVert\hat u_{ N}\rVert^2_{L^\infty(0, T; L^2(\Om_1))}+
	\mu c(\tilde{h})\lVert\nabla\hat u_{ N}\rVert^2_{L^2(0, T; L^2(\Om_1))}\\
	&+
	\rho_s\lVert\prt_tg_{ N}\rVert^2_{L^\infty(0, T; L^2(0, L))}+\beta\lVert\prt_xg_{ N}\rVert^2_{L^\infty(0, T; L^2(0, L))}+
	\alpha\lVert\prt_x^2g_{ N}\rVert^2_{L^\infty(0, T; L^2(0, L))}
	\\
	&\leq C\left( \lVert\hat u_0\rVert_{L^2(\Om_1)}+\lVert h_1\rVert_{L^2(0,L)}^2
	+\lVert h_0\rVert_{H^2(0, L)}^2\right)=:\tilde{C}.
	\end{aligned}
	\end{equation}
	Please note  that the constant $\tilde{C}$ does not depend on $\tilde{h},h_{\min},h_{\max}$.
\end{prop}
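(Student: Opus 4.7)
The plan is to test the integrated weak formulation \rfb{weakform1st} against the discrete solution itself. Multiplying \rfb{weakform1st} by $\alpha_N^k(t)$ and summing over $k=1,\dots,N$ produces, by the ansatz \rfb{ansatz}, the test pair $(\hat u_{N},\prt_tg_{N})$, which is admissible since $\tilde B^{-\intercal}\sum_k\alpha_N^k\Psi_k=\hat u_N$ satisfies $\div(\tilde B^\intercal\hat u_N)=0$ and is compatible with the kinematic condition $\hat u_N(\cdot,1)=\prt_tg_N\m{\rm e_2}$ built into the basis \rfb{basispsik}.

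With this choice the time-derivative terms reorganize into exact derivatives. The inertial fluid contribution becomes
$$\int_{\Om_1}\rho_f\tilde h\,\prt_t\hat u_N\cdot\hat u_N\m\dd\bz=\tfrac12\tfrac{\dd}{\dd t}\!\int_{\Om_1}\rho_f\tilde h|\hat u_N|^2\m\dd\bz-\tfrac12\int_{\Om_1}\rho_f\prt_t\tilde h|\hat u_N|^2\m\dd\bz,$$
while the beam contributions, after integration by parts in $x$ using $L$-periodicity, yield the exact derivative $\tfrac12\tfrac{\dd}{\dd t}(\rho_s\lVert\prt_tg_N\rVert_{L^2_x}^2+\beta\lVert\prt_xg_N\rVert_{L^2_x}^2+\alpha\lVert\prt_x^2g_N\rVert_{L^2_x}^2)$. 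The two convective contributions in \rfb{weakform1st} cancel exactly thanks to the antisymmetric formulation \rfb{convective}: when both are tested against $\hat u_N$ they become equal with opposite signs.

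The core of the argument, and what I expect to be the main obstacle, is the cancellation of the remaining ALE/geometric terms
$$-\tfrac12\int_{\Om_1}\rho_f\prt_t\tilde h|\hat u_N|^2\m\dd\bz-\int_{\Om_1}\rho_f\prt_t\tilde\chi\cdot(\tilde B\nabla)\hat u_N\cdot\hat u_N\m\dd\bz+\tfrac12\int_0^L\rho_f\prt_t\tilde h|\prt_tg_N|^2\m\dd x.$$
Using the explicit form $\prt_t\tilde\chi=\bbm{0 & \prt_t\tilde h\,z}^\intercal$ and the structure of $\tilde B$ from \rfb{Ah}, together with $\div(\tilde B^\intercal\hat u_N)=0$, one rewrites the middle term as $\tfrac12\int(\prt_t\tilde\chi\cdot\tilde B\nabla)|\hat u_N|^2\m\dd\bz$ and integrates by parts; the resulting boundary contribution on $\{z=1\}$ is evaluated via $\hat u_N(\cdot,1)=\prt_tg_N\m{\rm e_2}$, whereas the contribution on $\{z=0\}$ vanishes. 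This produces exactly the compensating terms needed to kill both the interior term in $\prt_t\tilde h|\hat u_N|^2$ and the boundary term in $\prt_t\tilde h|\prt_tg_N|^2$. This is the discrete realisation of the Reynolds transport identity that makes the decoupled formulation \rfb{eq:decoupled} energetically consistent, and it relies crucially on the particular symmetrised form of the convective term in \rfb{weakformpsi}.

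Once these cancellations are performed, the only remaining term on the left is the viscous dissipation $\mu\int_{\Om_1}(\tilde A\nabla)\hat u_N:\nabla\hat u_N\m\dd\bz$, which is coercive with a constant $c(\tilde h)>0$ by uniform positive-definiteness of $\tilde A$ under $h_{\min}\leq\tilde h\leq h_{\max}$. Integrating the resulting identity in time from $0$ to any $t\leq T$, invoking the initial-data bounds given by \rfb{init} together with the continuity properties of $\Pscr_N$ recorded below \rfb{projec}, and taking the supremum in time yields \rfb{twoener1new}. The fact that the right-hand side constant $\tilde C$ is independent of $\tilde h$, $h_{\min}$ and $h_{\max}$ is automatic, since all $\tilde h$-dependence has been absorbed into the coefficients $h_{\min}\rho_f$ and $\mu c(\tilde h)$ on the left-hand side.
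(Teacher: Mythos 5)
Your proposal is correct and follows essentially the same route as the paper: test with $\alpha_N^k$, exploit the antisymmetrised convective form \rfb{convective}, and close the ALE/geometric terms via the identity $\partial_t\tilde h=\div(\tilde B^\intercal\partial_t\tilde\chi)$ and integration by parts. One minor imprecision: the divergence-free condition $\div(\tilde B^\intercal\hat u_N)=0$ is not actually needed in the transport step you highlight---the rewriting of the middle term and its boundary contribution rely only on $\div(\tilde B^\intercal\partial_t\tilde\chi)=\partial_t\tilde h$ and the kinematic condition $\hat u_N(\cdot,1)=\partial_tg_N\,{\rm e_2}$---but this does not affect the argument.
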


\begin{proof}
	Multiply \rfb{weakform1st} by $\alpha_N^k(t)$ and sum over $k=1,\cdots, N$. 
	Note that
	$$\prt_t\tilde{h}=\div (\tilde{B}^\intercal\prt_t\tilde{\chi}), $$
	by using integration by parts we have
	\begin{equation*}
	\begin{aligned}
	\frac{1}{2}\int_{\Om_1}\rho_f\prt_t\tilde{h}|\hat u_{ N}|^2\m\dd\bz
	&=\frac{1}{2}\int_{\Om_1}\rho_f\div(\tilde{B}^\intercal\prt_t\tilde{\chi}) 
	|\hat u_{ N}|^2\m\dd\bz\\
	&=\frac{1}{2}\int_0^L\rho_f\prt_t\tilde{h}|\prt_t g_{ N}|^2\m\dd x
	-\int_{\Om_1}\rho_f\prt_t\tilde{\chi}(\tilde{B}\nabla)
	\hat u_{ N}\cdot\hat u_{ N}\m\dd\bz,
	\end{aligned}
	\end{equation*}
	which further implies that
	\begin{equation*}\label{recovertime}
	\int_{\Om_1}\rho_f \tilde{h}\hat u_{ N}\cdot \prt_t\hat
	u_{ N}\m\dd\bz+\frac{1}{2}\int_0^L\rho_f\prt_t\tilde{h}|\prt_t g_{ N}|^2\m\dd x
	-\int_{\Om_1}\rho_f\prt_t\tilde{\chi}(\tilde{B}\nabla)\hat
	u_{ N}\cdot\hat u_{ N}\m\dd\bz=\frac{1}{2}\frac{\dd}{\dd t}
	\int_{\Om_1}\rho_f\tilde{h}|\hat u_{ N}|^2\m\dd\bz.
	\end{equation*}
	Recalling the structure in \rfb{ansatz} and integrating the equation with 
	respect to time, we immediately obtain the energy balance \rfb{twoener1new}.
\end{proof}

{\bf Step 4: Performing the fixed-point theorem.}
We consider the following map for a fixed point. Let $h_{\min}, h_{\max}$ and $T$ be fixed by the initial conditions (See Remark~\ref{rem:time}). Similar with ${\boldsymbol \alpha_N}$ introduced in \eqref{ODE}, here we use the notation ${\boldsymbol \beta_N}:=\left(\beta_N^k\right)_{k=1}^N$. We take $K_N$ as the set of all vector-valued functions ${\boldsymbol \beta_N}$ satisfying the following conditions:
\begin{equation*}\label{KN}
K_N:=\left\{
\begin{aligned}
&{\boldsymbol \beta_N}\in C^1([0,T];\mathbb{R}^N)\left|\m h_\beta(t,x):=\int_0^t\sum_{k=1}^N\beta_N^k(\tau)\psi_k(x)\dd \tau+ \Pscr_N h_0 \right.\\
&\m\m\m\text{such that}\quad h_{\min}\leq h_\beta\leq h_{\max}, \quad \rho_s\lVert\prt_t h_\beta\rVert_{L^\infty(0, T; L^2(0, L))}^2\leq \tilde{C}.
\end{aligned} 
\right\},
\end{equation*} 
where the projection $\Pscr_N$ has been introduced in \rfb{projec}.
Taking such a function $h_\beta$ as the geometry, for $N$ given we consider the following map $F_N$:
\begin{align*}
F_N:K_N &\to C^2([0,T];\mathbb{R}^N)
\\
{\boldsymbol\beta_N}&\mapsto F_N({\boldsymbol \beta_N})={\boldsymbol \alpha_N},
\end{align*}
where ${\boldsymbol \alpha_N}$ is the unique solution to the ODE  \eqref{ODE}.
We shall use the Theorem of Schauder for fixed-point argument. For that we have to check the following points:
\begin{itemize}
	\item The set $K_N$ is convex and closed, as can be seen from its definition.
	\item The mapping is continuous and compact, as follows directly from classical ODE theory as the system is continuous and solutions are in $C^2[0,T]$ which is a compact subset of $C^1[0,T]$.
	\item The mapping is onto $K_N$, because of Remark~\ref{rem:time} and Proposition~\ref{firstenergy}. Indeed the energy estimate allows bounds on $\partial_tg_N$ and $\partial_x^2g_N$ independent of $\tilde{h}$. This implies in particular that for $T_0$ fixed by Remark~\ref{rem:time} and according choices of $h_{\min}$ and $h_{\max}$ that any function with bounded energy and with given initial values stays in the interval $[h_{\min},h_{\max}]$.
\end{itemize}
Hence the map $F_N$ possesses a fixed point in the set $K_N$. This means we find the existence of a coupled solution $(\hat u_{N}, g_N)$ to the system:

\begin{equation}\label{weaktimedisctete-coupled}
\begin{aligned}
&\left\langle \rho_f\prt_tg_N\prt_t\hat u_{N}, 
B_{g_{N}}^{-\intercal}\Psi_k\right\rangle
+\left\langle 
\rho_f{g_{N}}\prt_t^2\hat u_{N}, B^{-\intercal}_{g_{N}}
\Psi_k\right\rangle+\left\langle \rho_f{g_{N}}
\prt_t\hat u_{N}, \prt_tB^{-\intercal}_{g_{N}}
\Psi_k\right\rangle\\
&+\frac{1}{2}\left\langle \rho_f\prt_t\hat u_{N}\left(B_{g_{N}}
\nabla\right)\hat u_{N}, B^{-\intercal}_{g_{N}}
\Psi_k\right\rangle+\frac{1}{2}\left\langle \rho_f\hat u_{N}
\left(\prt_t B_{g_{N}}
\nabla\right)
\hat u_{N}, B_{g_{N}}^{-\intercal}\Psi_k\right\rangle\\
&+\frac{1}{2}\left\langle\rho_f\hat u_{N}\left(B_{g_{N}}
\nabla\right)\prt_t\hat u_{N}, B_{g_{N}}^{-\intercal}\Psi_k
\right\rangle+\frac{1}{2}\left\langle \rho_f\hat u_{N}
\left(B_{g_{N}}\nabla\right)\hat u_{N}, \prt_tB_{g_{N}}
^{-\intercal}\Psi_k\right\rangle\\
&-\frac{1}{2}\left\langle \rho_f\prt_t\hat u_{N}\left(B_{g_{N}}
\nabla\right)\left(B_{g_{N}}^{-\intercal}
\Psi_k\right), \hat u_{N}\right\rangle-\frac{1}{2}\left\langle
\rho_f\hat u_{N}\left(\prt_tB_{g_{N}}\nabla\right)\left(B_{g_{N}}
^{-\intercal}\Psi_k\right), \hat u_{N}\right\rangle\\
&-\frac{1}{2}\left\langle \rho_f\hat u_{N}\left(B_{g_{N}}
\nabla\right)\left(\prt_tB_{g_{N}}^{-\intercal}\Psi_k\right), 
\hat u_{N}\right\rangle-\frac{1}{2}\left\langle \rho_f\hat u_{N}\left(B_{g_{N}}\nabla\right)\left(B_{g_{N}}^{-\intercal}
\Psi_k\right), \prt_t\hat u_{N}\right\rangle\\
&+\left\langle \rho_f\prt_t^2g_N\prt_tg_{N}, 
\psi_k\right\rangle
-\left\langle\rho_f\prt_t^2\chi_{g_{N}}(B_{g_{N}}\nabla)\hat 
u_{N}, B_{g_{N}}^{-\intercal}\Psi_k\right\rangle-\left\langle
\rho_f\prt_t\chi_{g_{N}}(\prt_tB_{g_{N}}\nabla)\hat u_{N},
B_{g_{N}}^{-\intercal}\Psi_k\right\rangle\\
&-\left\langle \rho_f\prt_t\chi_{g_{N}}(B_{g_{N}}\nabla)\prt_t
\hat u_{N}, B_{g_{N}}^{-\intercal}\Psi_k\right\rangle-\left
\langle \rho_f\prt_t\chi_{g_{N}}(B_{g_{N}}\nabla)\hat u_{N},
\prt_tB_{g_{N}}^{-\intercal}\Psi_k\right\rangle\\
&+\left\langle \mu (\prt_t{A_{g_{N}}}\nabla)\hat u_{N},
\nabla(B_{g_{N}}^{-\intercal}\Psi_k)\right\rangle+\left\langle
\mu(A_{g_{N}}\nabla)\prt_t\hat u_{N}, \nabla(B_{g_{N}}
^{-\intercal}\Psi_k)\right\rangle\\
&+\left\langle \mu(A_{g_{N}}\nabla)
\hat u_{N}, \nabla(\prt_tB_{g_{N}}^{-\intercal}\Psi_k)
\right\rangle+\left\langle\rho_s\prt_t^3g_{N}, \psi_k\right\rangle-\left
\langle\beta
\prt_t\prt_x^2g_{N}, \psi_k\right\rangle\\
&+\left\langle\alpha
\prt_t\prt_x^2g_{N}, \prt_x^2\psi_k\right\rangle
=0,
\end{aligned}
\end{equation}
where $\hat u_N(t,x,1)=\prt_tg_N{\rm e_2}$ for all $(t,x)\in (0, T)\times (0, L)$.

\subsection{The higher order in time-estimate}
It can be seen from the definition of the ODE, that multiplying \eqref{weaktimedisctete-coupled} with $\alpha_N'$ is precisely testing the time-derivative equation with the coupled test-function $(\partial_t^2g_N,\partial_t\hat{u}_N+G_N)$ with $G_N$ similarly defined as in \rfb{G}, i.e. $G_N=B_{g_{N}}^{-\intercal}\prt_tB_{g_{N}}
^\intercal\hat u_{ N}$. Hence the formal estimate of Section~\ref{sectionproof} can be performed rigorously on the discrete level. Therefore, we have the following theorem. Please observe, that together with Proposition~\ref{initprop} this implies uniform higher order estimates.

\begin{thm}
	\label{thm:main-disrete}
	Under the conditions \eqref{initialneed} there exists a discrete solution to \eqref{weaktimedisctete-coupled}, that satisfies the first-order equation~\eqref{weakform1st} (for the coupled equation with $\tilde h$ replaced by $g_N$), the energy inequality \eqref{twoener1new} and the additional time-derivative estimate:
	\begin{equation}\label{twoener3'}
	\begin{aligned}
	&\rho_f\lVert\prt_t\hat u_{ N}\rVert_{L^\infty(0, T; L^2(\Om_1))}^2+\mu
	\lVert\nabla\prt_t\hat u_{ N}\rVert^2_{L^2(0, T; L^2(\Om_1))}+\rho_s
	\lVert\prt_t^2g_{ N}\rVert_{L^\infty(0, T; L^2(0, L))}\\
	&+\beta\lVert\prt_t\prt_xg_{N}\rVert_{L^\infty(0, T; L^2(0, L))}+\alpha
	\lVert\prt_t\prt_x^2g_{ N}\rVert^2_{L^\infty(0, T; L^2(0, L))}
	\\
	&\leq C(h_{\min},\lVert\hat u_0
	\rVert_{L^2(\Om_1)},\lVert 
	h_0\rVert_{H^2(0, L)}),\lVert 
	h_1\rVert_{L^2(0, L)})\Big(\lVert(\prt_t\hat u)(0)\rVert_{L^2(\Om_1)}^2+ \lVert(\prt_t^2 h)(0)\rVert_{L^2(0, L)}^2+\lVert h_1\rVert_{H^2(0, L)}^2\Big),
	\end{aligned}
	\end{equation}
	which holds in particular independent of $N$.
\end{thm}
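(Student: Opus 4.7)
The plan is to combine the Galerkin solution obtained through Steps 1--4 of the preceding construction with a rigorous version of the formal a-priori estimate of Section~\ref{sectionproof}, performed directly at the finite-dimensional level. Step~4 already furnishes a coupled discrete solution $(\hat u_N, g_N)$ solving \eqref{weaktimedisctete-coupled}, and since the ODE \eqref{ODE} has smooth coefficients and invertible leading matrix $\Ascr$ (see \eqref{invertA}), I would first note that ${\boldsymbol\alpha_N}\in C^2([0,T];\mathbb{R}^N)$, so that $(\hat u_N, g_N)$ is twice continuously differentiable in time and may legitimately be inserted in the time-differentiated weak formulation.

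The crucial observation, highlighted just before the statement, is that multiplying \eqref{weaktimedisctete-coupled} by ${\boldsymbol\alpha_N}'(t)$ and summing over $k$ is identical to testing the time-differentiated Galerkin system with the coupled pair $(\partial_t^2 g_N,\, \partial_t \hat u_N + G_N)$, where $G_N := B_{g_N}^{-\intercal}\partial_t B_{g_N}^\intercal \hat u_N$. This identification works because the ansatz \eqref{ansatz} forces $\partial_t \hat u_N + G_N = B_{g_N}^{-\intercal}\sum_k (\alpha_N^{k})'(t)\Psi_k$, which is precisely the admissible form of test functions in the discrete space; furthermore the boundary compatibility $\hat u_N(t,x,1)=\partial_t g_N\,{\rm e_2}$ built into the choice \eqref{basispsik} produces exactly the cancellation between fluid and beam boundary contributions that made the pressure trace terms $(*)$, $(**)$, $(***)$ disappear in Section~\ref{sectionproof}.

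Given this identification I would then reproduce the computation of Section~\ref{sectionproof} term by term, replacing $(\hat u, h)$ by $(\hat u_N, g_N)$. Each sub-estimate used only (a) Ladyzhenskaya's $L^4$ interpolation in two dimensions and the one-dimensional interpolation \eqref{ineqinfty}, (b) Young's and Poincar\'e's inequalities, and (c) uniform pointwise bounds on $g_N$, $\partial_x g_N$ and $1/g_N$. Point (c) is supplied by the definition of the fixed-point set $K_N$ together with the energy bound of Proposition~\ref{firstenergy} and the H\"older-type estimate \eqref{eq:hoeldercont}. Consequently the Gr\"onwall closure of \eqref{sum} proceeds without modification at the discrete level and yields a bound depending only on $h_{\min}$, $C_0$ and the initial norms, but independent of $N$.

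The main technical obstacle is concentrated in controlling the initial values on the right-hand side of the discrete version of \eqref{sum}. This is supplied by Proposition~\ref{initprop}: under $\hat u_0 \in H^2(\Omega_1)$, $h_0 \in H^4(0,L)$ and $h_1\in H^2(0,L)$, the vector ${\boldsymbol\alpha_N}'(0)$ defined through \eqref{detealpha} produces $\lVert(\partial_t\hat u_N)(0)\rVert_{L^2(\Omega_1)} + \lVert(\partial_t^2 g_N)(0)\rVert_{L^2(0,L)}$ bounded uniformly in $N$ by the assumed initial norms, using in an essential way the boundedness of the projection $\Pscr_N$ on $H^m$ discussed after \eqref{projec}. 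Inserting these uniform initial bounds into the discrete Gr\"onwall closure would then yield \eqref{twoener3'} with a constant independent of $N$, completing the proof.
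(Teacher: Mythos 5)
Your overall strategy is the right one and matches the paper's: existence of $(\hat u_N,g_N)$ comes from the Step--4 fixed point, ${\boldsymbol\alpha_N}\in C^2$ from the ODE theory with invertible $\Ascr$, and the key move is to recognise that multiplying \eqref{weaktimedisctete-coupled} by ${\alpha_N^k}'(t)$ and summing over $k$ is the same as testing the time--differentiated Galerkin system with the pair $(\partial_t^2 g_N,\partial_t\hat u_N+G_N)$, after which one closes a Gr\"onwall argument and controls the initial values via Proposition~\ref{initprop} with the projection bounds.

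However, there is a nontrivial verification that you assume away when you say you will ``reproduce the computation of Section~\ref{sectionproof} term by term.'' In Section~\ref{sectionproof} the terms $R_6,R_7,R_8$ (the $\varphi$--terms) do \emph{not} come from testing the momentum equation with $(\partial_t\hat u+G)$; they arise only because the formal argument works with the pressure explicitly and then chooses a second, separate test function $\varphi$ of Lemma~\ref{pressurelemma} to remove the residual term $\int\hat p\,\div(\partial_t B_h^\intercal(\partial_t\hat u+G))$. In the Galerkin scheme there is no pressure and no such second test -- the weak formulation \eqref{weaktimedisctete-coupled} contains only solenoidal test functions. So the discrete equation multiplied by ${\boldsymbol\alpha_N}'$ does not literally have $R_6,R_7,R_8$ sitting on its right--hand side, and the alleged term-by-term identification breaks exactly there. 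What the paper proves, and what you would have to prove as well, is the algebraic identity
\[
\sum_{k=1}^N\partial_t B_{g_N}^{-\intercal}\,{\alpha_N^k}'(t)\,\Psi_k
= -\partial_t B_{g_N}^{-\intercal}\partial_t B_{g_N}^\intercal\hat u_N - \partial_t B_{g_N}^{-\intercal}B_{g_N}^{\intercal}\partial_t\hat u_N
= \varphi_N,
\]
which shows that the $\partial_t B_{g_N}^{-\intercal}$--parts of the test--function expansion in \eqref{express} exactly reproduce the $\varphi$--contributions that in the continuum formulation came from the pressure. Without this verification the claim that the discrete right--hand side ``is'' the right--hand side of \eqref{sum} is not justified. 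A similar, smaller gap appears for $R_9$: the discrete formulation uses the symmetrised convective structure of \eqref{convective}, so one must check (integrating by parts and using that $\partial_t B_h^\intercal\hat u\otimes\hat u\,(\partial_t\hat u+G)$ vanishes at $z=1$) that the symmetrised terms of \eqref{weaktimedisctete-coupled} multiplied by ${\boldsymbol\alpha_N}'$ reassemble into $R_9$. These two checks are the actual substance of the proof; your proposal acknowledges neither.
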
 
\begin{proof}
	The existence follows by the fixed point performed in the last subsection. 
	We are left to show the time-derivative estimate of $\hat u_{ N}$, i.e. \rfb{twoener3'}. 
	To do this, we first derive the following identities:
	\begin{equation*}\label{compute}
	\begin{aligned}
	&\sum_{k=1}^N{\alpha_N^k}'(t)\psi_k=\prt_t^2g_{ N},
	\\
	&\sum_{k=1}^N B_{g_{N}}^{-\intercal}{\alpha_N^k}'(t)\Psi_k
	=
	B_{g_{N}}^{-\intercal}\prt_t\left(B_{g_{N}}^\intercal
	\hat u_{ N}\right)=B_{g_{N}}^{-\intercal}\prt_tB_{g_{N}}
	^\intercal\hat u_{ N}+\prt_t\hat u_{ N},\\
	&\sum_{k=1}^N\prt_tB_{g_{N}}^{-\intercal}{\alpha_N^k}'(t)\Psi_k
	=\prt_tB_{g_{N}}^{-\intercal}\prt_t\left(B_{g_{N}}^{\intercal}
	\hat u_{ N}\right)
	=\prt_tB_{g_{N}}^{-\intercal}\prt_t
	B_{g_{N}}^\intercal\hat u_{ N}+\prt_tB_{g_{N}}^
	{-\intercal}B_{g_{N}}^\intercal\prt_t\hat u_{ N}.
	\end{aligned}
	\end{equation*}
	By using the fact that $\nabla(Av)=(\nabla A)v+
	A(\nabla v)$, where $A$ is a $2\times 2$ matrix and $v$ 
	is a $2\times 1$ matrix, we also derive that
	\begin{equation*}
	\begin{aligned}
	&\sum_{k=1}^N{\alpha_N^k}'(t)\nabla\left(B_{g_{N}}^{-\intercal}
	\Psi_k\right)
	=\nabla\left(B_{g_{N}}^{-\intercal}\prt_t\left(B_{g_{N}}
	^{\intercal}\hat u_{ N}\right)\right)\\
	&=\nabla B_{g_{N}}^{-\intercal}\prt_t B_{g_{N}}^\intercal
	\hat u_{ N}+\nabla B_{g_{N}}^{-\intercal}B_{g_{N}}
	^{\intercal}\prt_t\hat u_{ N}
	+B_{g_{N}}^{-\intercal}\nabla\prt_tB_{g_{N}}^{\intercal}\hat u_{ N}
	\\
	&\quad +B_{g_{N}}^{-\intercal}\prt_tB_{g_{N}}^\intercal\nabla\hat u_{ N}
	+B_{g_{N}}
	^{-\intercal}\nabla B_{g_{N}}^\intercal\prt_t\hat u_{ N}+\nabla\prt_t\hat u_{ N}
	\\
	\text{ and }&\sum_{k=1}^N{\alpha_N^k}'(t)\nabla\left(\prt_tB_{g_{N}}
	^{-\intercal}\Psi_k\right)=\nabla\left(\prt_tB_{g_{N}}^{-\intercal}\prt_t\left(B_{g_{N}}^
	\intercal\hat u_{ N}\right)\right)\\
	&=\nabla\prt_tB_{g_{N}}^{-\intercal}\prt_tB_{g_{N}}^\intercal
	\hat u_{ N}+\nabla\prt_tB_{g_{N}}^{-\intercal}B_{g_{N}}
	^\intercal\prt_t\hat u_{ N}+\prt_tB_{g_{N}}^{-\intercal}\nabla\prt_tB_{g_{N}}^\intercal
	\hat u_{ N}\\
	&\quad +\prt_tB_{g_{N}}^{-\intercal}\prt_tB_{g_{N}}^\intercal\nabla\hat u_{ N}
	+\prt_tB_{g_{N}}
	^{-\intercal}\nabla B_{g_{N}}^\intercal\prt_t\hat u_{ N}+\prt_tB_{g_{N}}^{-\intercal}B_{g_{N}}^\intercal\nabla\prt_t\hat u_{ N}.
	\end{aligned}
	\end{equation*}
	Now multiply \rfb{weaktimedisctete-coupled} by ${\alpha_N^k}'(t)$, 
	sum over $k=1,\cdots, N$ and substitute the above sum expressions in the resulting 
	equation, which allows to reconstruct \eqref{sum}. 
	%
	First taking the integration with time on $(0, t)$ for $t\in (0, T)$, we note that the terms kept on the left side are
	\begin{equation*}
	\begin{aligned}
	&\rho_f\lVert\prt_t\hat u_{ N}\rVert_{L^\infty(0, T; L^2(\Om_1))}^2+\mu
	\lVert\nabla\prt_t\hat u_{ N}\rVert^2_{L^2(0, T; L^2(\Om_1))}+\rho_s
	\lVert\prt_t^2g_{ N}\rVert_{L^\infty(0, T; L^2(0, L))}\\
	&+\beta\lVert\prt_t\prt_xg_{N}\rVert_{L^\infty(0, T; L^2(0, L))}+\alpha
	\lVert\prt_t\prt_x^2g_{ N}\rVert^2_{L^\infty(0, T; L^2(\Om_1))}
	.
	\end{aligned}
	\end{equation*} 
	which precisely corresponds to the left hand side of \eqref{sum}.
	We show in what follows that, multiplying \rfb{weaktimedisctete-coupled} by ${\alpha_N^k}'(t)$, the resulting discrete equation also has the same right hand side as \eqref{sum} in the continuous level. For that we first note from the above series structure that $\sum_{k=1}^N B_{g_{N}}^{-\intercal}{\alpha_N^k}'(t)\Psi_k$ represents $\prt_t\hat u_N+G_N$. With the definition of $\varphi$ in \rfb{findvarphi}, we derive from the formula of $G$ in \rfb{G} that
	\begin{equation*}
	\varphi=
	B_h^{-\intercal}\prt_t B_h^\intercal(\prt_t\hat u+B_h^{-\intercal}\prt_tB_h^\intercal\hat u)=-\prt_t B_h^{-\intercal}\prt_t B_h^{\intercal}\hat u
	-\prt_t B_h^{-\intercal}B_h^{\intercal}\prt_t\hat u,
	\end{equation*}
	which corresponds to $\sum_{k=1}^N\prt_tB_{g_{N}}^{-\intercal}{\alpha_N^k}'(t)\Psi_k$ in the discrete level. We used in the above the fact that $B_h^{-\intercal}\prt_t B_h^\intercal=-\prt_t B_h^{-\intercal}B_h^{\intercal}$. Hence, the series $\sum_{k=1}^N{\alpha_N^k}'(t)\nabla\left(\prt_tB_{g_{N}}
	^{-\intercal}\Psi_k\right)$ thereby corresponds to $\nabla\varphi$.
	Form here, we realized that the terms depending on $\varphi$ on the right hand side in \eqref{sum} appear here due to the time derivative of $\partial_t B_{g_{N}}^{-\intercal}$, which is precisely related to the pressure.
	
	Comparing the right hand sides of the continuous equation \rfb{sum} with the discrete equation, we realize that, after doing the integration by parts for the solid part, only the convective terms need to be clarified. For that we rewrite the conventive term as it appears in the continuous equation in \rfb{sum}, such that it connects to \rfb{weaktimedisctete-coupled} multiplied by ${\alpha_N^k}'(t)$. Using the structure \rfb{convective} we derive that
	\begin{equation*}
	\begin{aligned}
	-\frac{1}{2}\int_0^t\int_{\Om_1}\rho_f\prt_t\hat u(B_h\nabla)\hat u\cdot (\prt_t\hat u+G)\m\dd\bz=\frac{1}{2}\int_0^t\int_{\Om_1}\rho_f\prt_t\hat u(B_h\nabla)(\prt_t\hat u+G)\cdot \hat u\m\dd\bz-\frac{1}{2}\int_0^t\rho_f|\prt_t^2h|^2\prt_th\m\dd x,
	\end{aligned}
	\end{equation*}
	and
	\begin{equation*}
	-\frac{1}{2}\int_0^t\int_{\Om_1}\rho_f\hat u(B_h\nabla)\prt_t\hat u\cdot (\prt_t\hat u+G)\m\dd\bz=\frac{1}{2}\int_0^t\int_{\Om_1}\rho_f\hat u(B_h\nabla)(\prt_t\hat u+G)\cdot \prt_t\hat u\m\dd\bz-\frac{1}{2}\int_0^t\rho_f|\prt_t^2h|^2\prt_th\m\dd x,
	\end{equation*}
	\begin{equation*}
	-\frac{1}{2}\int_0^t\int_{\Om_1}\rho_f\hat u(\prt_tB_h\nabla)\hat u\cdot (\prt_t\hat u+G)\m\dd\bz=\frac{1}{2}\int_0^t\int_{\Om_1}\rho_f\hat u(\prt_tB_h\nabla)(\prt_t\hat u+G)\cdot \hat u\m\dd\bz,
	\end{equation*}
	where we observe that no boundary term in the last equality since $\prt_t B_h^\intercal\hat u\otimes \hat u(\prt_t\hat u+G)$ vanishes at $z=1$. 
	
	Based on the above analysis, we obtain that the resulting equation by multipying \rfb{weaktimedisctete-coupled} by ${\alpha_N^k}'(t)$ is exactly corresponds to the structure \rfb{sum} in the continuous sense. Moreover, $(\hat u_N, g_N)$ preserves the energy estimate \rfb{twoener1new}. Therefore, the proof follows line by line using the estimates of \eqref{sum}. This finishes the proof.
\end{proof}

\subsection{Existence of a strong solution}
By Theorem~\ref{thm:main-disrete} together with Proposition~\ref{initprop} we
got the time-derivative estimate on the discrete level. This allows us to reconstruct a weak solution, with additional regularity properties. Together this implies the existence of a strong solution.
With the help of \rfb{twoener1new} and \rfb{twoener3'}, we are able to deal with the limit procedure.

{\bf Limit passage for $\hat{u}$ and $h$.} 

Based on the uniform estimates of Proposotion \ref{firstenergy} and Theorem~\ref{thm:main-disrete}, 
sending $N\to\infty$, we obtain the following 
convergence, up to a subsequence, for every $T>0$, 
\begin{equation}\label{converg}
\begin{aligned}
\hat u_{ N} &\rightharpoonup \hat u \quad \text{weak-$\ast$ 
	in} \quad L^\infty(0, T; L^2(\Om_1))\cap L^2(0, T; H^1(\Om_1)),\\
\prt_t\hat u_{ N} &\rightharpoonup \prt_t\hat u
\quad \text{weak-$\ast$ 
	in} \quad L^\infty(0, T; L^2(\Om_1))\cap L^2(0, T; H^1(\Om_1)),\\
g_{ N}&\rightharpoonup h \quad \text{weak-$\ast$ in} \quad
L^\infty(0, T; H^2(0, L)),\\
\prt_tg_{ N} &\rightharpoonup \prt_t h \quad \text{weak-$\ast$ 
	in} \quad  L^\infty(0, T; H^2(0, L)),\\
\prt_t^2g_{ N} &\rightharpoonup \prt_t^2 h \quad \text{weak-$
	\ast$ in} \quad  L^\infty(0, T; L^2(0, L)).
\end{aligned}
\end{equation}
This implies by compactness that up to a subsequence, we have using the analysis \eqref{eq:hoeldercont} that
\begin{equation*}\label{convergstr}
\begin{aligned}
\hat u_{ N} &\to \hat u \quad\text{ in }\quad L^2((0,T)\times \Om_1),
\\
\prt_tg_{ N} &\to \prt_th \quad\text{ in }\quad C^{0,\alpha}([0, T]\times [0,L]),
\\
g_{ N} &\to h \quad \text{ in }\quad C^{1}([0, T]\times [0,L]),
\end{aligned}
\end{equation*}
for $0<\alpha<\frac{2}{3}$.
This allows us to pass to the limit with \eqref{weakform1st} (the version by replacing $\tilde h$ by $g_N$ due to the fixed point procedure)
and we thereby obtain
\begin{equation*}\label{eq:limiteq}
\begin{aligned}
&\int_{\Om_1}\rho_fh\prt_t\hat u \cdot 
B_h^{-\intercal}\Psi\m\dd\bz+\int_{\Om_1}\rho_f\hat
u (B_h \nabla)\hat u\cdot 
B_h^{-\intercal}\Psi\m\dd\bz
\\
&-\int_{\Om_1}\rho_f\prt_t\chi_h(B_h\nabla)
\hat u \cdot B_h^{-\intercal}\Psi\m\dd\bz
+\mu\int_{\Om_1}(A_h\nabla)\hat u:\nabla
\left(B_h^{-\intercal}\Psi\right)\m\dd\bz\\
&+\rho_s\int_0^L\prt_t^2h\psi\m\dd x-\beta\int_0^L
\prt_x^2h \psi\m\dd x
+\alpha\int_0^L\prt_x^2h\prt_x^2\psi\m\dd x
=0,
\end{aligned}
\end{equation*}
for all sufficiently smooth $\psi(t,x)=\Psi(t,x,1)$ with $\div \Psi=0$.

{\bf Regularity of $u$, $p$ and $h$.}

Here we rely on the properties of the steady Stokes equation. Let us consider the steady Stokes system
\begin{equation}\label{eq:Stokes}
\left\{\begin{aligned}
&\Delta u-\nabla\pi=f,	\\
&\div u=0,\\
&u|_{\partial{\Om_h}}=u_{\partial},
\end{aligned}\right.
\end{equation}
in a domain ${\Om_h}\subset\rline^2$ with unit normal ${\bf n}$. The result given in the following theorem is a maximal regularity estimate for the solution of \eqref{eq:Stokes} in terms of the right-hand side. We quote here the Stokes estimate from \cite[Theorem 3.1]{breit2022regularity} and \cite[Theorem 2.8]{BreMenSchSu23} for the system \rfb{eq:Stokes}. 
\begin{thm}\label{thm:stokessteady}
	Let $q\in(1,\infty)$, $s\geq 1+\frac{1}{q}$ and 
	\begin{align*}
	\varrho\geq q\quad\text{if}\quad q(s-1)\geq 3;\quad \varrho\geq \tfrac{2q}{q(s-1)-1}\quad\text{if}\quad q(s-1)< 3,
	\end{align*}
	such that 
	$3\big(\frac{1}{q}-\frac{1}{2}\big)+1\leq  s$.
	Suppose that ${\Om_h}$ is a $B^{\theta}_{\varrho,q}$-domain\footnote{For bounded domain $\mathcal{O}$, the Besov spaces are defined as $B^{\theta}_{\varrho,q}(\mathcal{O}):=\{f|_{\mathcal{O}}: f\in B^{\theta}_{\varrho,q}(\rline^n )\}$ with the norm $\lVert g\rVert_{B^{\theta}_{\varrho,q}(\mathcal{O})}=\inf\{\lVert f\rVert_{B^{\theta}_{\varrho,q}(\rline^n )}: f|_{\mathcal{O}}=g\}$.} and $h\in L^\infty(\bar I; B^{\theta}_{\varrho,q}\cap C^1[0, L])$ for some $\theta>s-1/q$ with locally small Lipschitz constant, $f\in W^{s-2,q}({\Om_h})$ and $u_{\partial}\in W^{s-1/q,q}(\partial{\Om_h})$ with $\int_{\partial{\Om_h}}u_\partial\cdot {\bf n}\dd S=0$. Then there is a unique solution to \eqref{eq:Stokes} with $\int_{\Omega_h} \pi \dx=0$ that satisfies
	\begin{align}\label{stokesmain}
	\|u\|_{W^{s,q}({\Om_h})}+\|\pi\|_{W^{s-1,q}({\Om_h})}\lesssim\|f\|_{W^{s-2,q}({\Om_h})}+\|u_{\partial}\|_{W^{s-1/q,q}(\partial{\Om_h})}.
	\end{align}
\end{thm}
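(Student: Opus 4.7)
The plan is to reduce this to Stokes maximal regularity on a fixed flat reference domain via a change of variables and then absorb the perturbation induced by the moving geometry through paraproduct/multiplier estimates on the Besov--Sobolev scale. This follows the strategy of the quoted sources \cite{breit2022regularity,BreMenSchSu23}.

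First I would handle the inhomogeneous boundary data. Using the trace theorem in reverse, construct an extension $u_\partial^\ast \in W^{s,q}(\Om_h)$ of $u_\partial$ and then correct it by a Bogovski\u{\i}-type operator so that $\div u_\partial^\ast = 0$; this correction is possible thanks to the flux condition $\int_{\partial\Om_h} u_\partial \cdot {\bf n}\,\dd S = 0$. Subtracting $u_\partial^\ast$ from $u$ reduces the system to one with homogeneous Dirichlet data and modified source $\tilde f \in W^{s-2,q}(\Om_h)$ satisfying $\|\tilde f\|_{W^{s-2,q}} \lesssim \|f\|_{W^{s-2,q}} + \|u_\partial\|_{W^{s-1/q,q}}$. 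Next I would flatten the domain via the ALE map $\Phi(x,z) = (x, h(x)z)$ from Subsection~\ref{sub1}, mapping $\Om_1$ diffeomorphically onto $\Om_h$. In the reference variables the Stokes system transforms into a perturbed system on $\Om_1$ whose coefficients are exactly the matrices $A_h$ and $B_h$ of \eqref{Ah}.

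I would then rewrite the perturbed equation as $-\Delta \hat u + \nabla \hat\pi = \hat F(\hat u, \hat\pi)$, $\div \hat u = \hat g(\hat u)$, where $\hat F$ and $\hat g$ collect the lower-order contributions coming from $A_h - \mathbb{I}_{2\times 2}$ and $B_h - \mathbb{I}_{2\times 2}$. Applying the classical flat-domain Stokes maximal regularity (Cattabriga/Solonnikov type) on the reference strip yields
\[
\|\hat u\|_{W^{s,q}(\Om_1)} + \|\hat\pi\|_{W^{s-1,q}(\Om_1)} \lesssim \|\hat F\|_{W^{s-2,q}(\Om_1)} + \|\hat g\|_{W^{s-1,q}(\Om_1)}.
\]
The regularity hypothesis $h \in B^{\theta}_{\varrho,q}$ with $\theta > s - 1/q$ then enters sharply through Bony paraproduct estimates: under the stated constraints $\varrho \geq q$ (respectively $\varrho \geq 2q/(q(s-1)-1)$ for small $s$) and $3(\tfrac{1}{q}-\tfrac{1}{2})+1 \leq s$, the perturbations $A_h - \mathbb{I}_{2\times 2}$ and $B_h - \mathbb{I}_{2\times 2}$ act as bounded multipliers on $W^{s-1,q}(\Om_1)$ and $W^{s-2,q}(\Om_1)$.

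The hard part will be the absorption step at the critical regularity threshold. Because only the local Lipschitz constant of $h$ is small, one cannot globally absorb the coefficient terms into the left-hand side; the remedy is a localization with a partition of unity at a scale where $\|\partial_x h\|_{L^\infty}$ on each patch is small enough to be absorbed, while the commutator terms with the cut-offs are of strictly lower order and controlled by interpolation against the energy estimate. Patching the local bounds and unwinding the change of variables yields \eqref{stokesmain}. The most delicate point throughout is the interplay between the trace scale $W^{s-1/q,q}$ and the Besov scale $B^{\theta}_{\varrho,q}$ at the threshold $\theta = s - 1/q$, which is precisely why a Besov (rather than Sobolev) hypothesis on $h$ is natural.
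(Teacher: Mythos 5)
This theorem is not proved in the paper.  Immediately before its statement the authors write ``We quote here the Stokes estimate from \cite[Theorem 3.1]{breit2022regularity} and \cite[Theorem 2.8]{BreMenSchSu23}'' --- so Theorem~\ref{thm:stokessteady} is imported as a black box and there is no in-paper argument for you to match.  Your sketch is a plausible attempt to reconstruct the proof in the cited references, and the general toolkit you invoke (reduction to zero boundary data, boundary flattening, paraproduct-type multiplier bounds on the Besov--Sobolev scale, localization against the locally small Lipschitz constant) is indeed the one used there.

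There is, however, a concrete gap in the order of operations.  You flatten once, globally, via the ALE map $\Phi(x,z)=(x,h(x)z)$ and then treat the transformed problem as a perturbation of the flat Stokes operator on $\Om_1$, with the perturbation coefficients $A_h-\mathbb{I}_{2\times 2}$ and $B_h-\mathbb{I}_{2\times 2}$ sent to the right-hand side and hoped to be absorbable.  But these matrices are \emph{not} small where $h$ is far from $1$: from \eqref{Ah}, even when $\partial_xh=0$ one has $A_h=\diag(h,h^{-1})$ and $B_h=\diag(h,1)$, so the deviation from the identity is $O(1)$ everywhere $h\neq 1$ and cannot be absorbed, no matter how fine the subsequent localization.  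The hypothesis of the theorem only gives smallness of the \emph{local Lipschitz constant} of $h$, i.e.\ of $\partial_xh$ on small patches, not of $h-1$.  To make your perturbation step work you must either localize \emph{before} flattening and, on each patch near a boundary point $(x_0,h(x_0))$, flatten with a map that freezes $h$ at $h(x_0)$ (so that the frozen-coefficient operator is an anisotropic but constant-coefficient Stokes operator that rescales to the standard one and the residual coefficients are controlled by $\partial_x h$ and by oscillation of $h$ on the patch), or keep the global ALE picture but explicitly separate the bounded-but-not-small parts $\diag(h,h^{-1})$, $\diag(h,1)$ from the genuinely small pieces proportional to $\partial_xh$, treating the former by a homotopy/continuity-of-coefficients argument rather than by absorption.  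As written, the step ``apply flat-domain maximal regularity and absorb $\hat F,\hat g$'' would fail.  Two smaller points worth flagging: the Bogovski\u{\i} correction with merely the single integral flux condition $\int_{\partial\Om_h}u_\partial\cdot{\bf n}\,\dd S=0$ uses the periodicity of the strip (one connected boundary component at $z=1$ and one at $z=0$), and the commutators generated by the cut-off functions are not divergence-free, so the localized problem is a Stokes problem with prescribed (nonzero) divergence, which requires an explicit treatment rather than the phrase ``strictly lower order''.
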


\begin{rmk}
	{\rm We remark here that the above result in \cite{breit2022regularity} and \cite{BreMenSchSu23} is stated in a general $B^{\theta}_{\varrho,q}$-domain $\mathcal{O}$. It is also available for the moving domain $\Om_h$ with $h\in L^\infty(\bar I; B^{\theta}_{\varrho,q}\cap C^1[0, L])$, and it suffices to verify that $\prt\Om_h\in B^{\theta}_{\varrho,q}$. This has been explained in \cite[Remark 3.4]{breit2022regularity} and \cite[Remark 2.9]{BreMenSchSu23} in a detailed way.
	}
\end{rmk}

Based on the weakly lower semi-continuity of the weak convergence in \rfb{converg}, we note that $(\hat u, h)$ also satisfy the energy balance \rfb{twoener1new} and the time-derivative estimate \rfb{twoener3'}. 
We will explain in the following that this, together with Lemma \ref{thm:stokessteady}, implies more regularity for $\hat u$. Moreover, with the help of Proposition \ref{relareg}, we could derive more regularity directly for $u$. 

\begin{thm}\label{themfinal}
	Assume that the initial data $(u_0, h_0, h_1)$ satisfy $h_0\in H^4(0, L)$, $h_1\in H^2(0, L)$, $u_0\in H^2(\Om_{h_0})$ and $\min_{x\in(0, L)}h_0=\delta>0$, then the system \eqref{fluideq}--\eqref{initiald} admits a unique solution $( u, p, h)$, for $T>0$, satisfying 
	\begin{equation}\label{hatfinal}
	\begin{aligned}
	&h\in W^{1,\infty}(0, T; H^2(0, L))\cap W^{2, \infty}(0, T; L^2(0, L))\cap L^\infty(0, T; H^4(0, L)),\\
	&  u\in L^\infty(0, T; H^2(\Om_h))\cap L^4(0, T; H^{\frac{5}{2}}(\Om_h)),\\
	& p\in L^\infty(0, T; H^1(\Om_h))\cap L^4(0, T; H^{\frac{3}{2}}(\Om_h)).
	\end{aligned}
	\end{equation}
	This solution exists until the self-intersection of $\Om_h$.
\end{thm}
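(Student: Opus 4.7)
\textbf{Proof plan for Theorem \ref{themfinal}.}
The plan is to take the weak limit $(\hat u, h)$ constructed from the Galerkin sequence $(\hat u_N, g_N)$ (inheriting the energy bound of Proposition~\ref{firstenergy} and the time-derivative bound of Theorem~\ref{thm:main-disrete} via weak-$*$ lower semi-continuity), and then to bootstrap spatial regularity using the steady Stokes estimate of Theorem~\ref{thm:stokessteady} and the fourth-order beam equation. First I would check that the limit $(\hat u, h)$ has all the a-priori properties recorded in \eqref{formal-est}; then, transferring back to the Eulerian domain via Proposition~\ref{relareg} and Remark~\ref{relahatenergy} yields the corresponding bounds for $(u,h)$ in $\Omega_h$. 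In particular, the bound $h\in W^{1,\infty}(0,T;H^2)\cap W^{2,\infty}(0,T;L^2)$ follows directly from \eqref{formal-est}, and the H\"older regularity \eqref{eq:hoeldercont} gives $h\in C^1([0,T]\times[0,L])$ so $\partial\Omega_h$ is a $C^1$-in-time family of $C^1$ curves, which is enough to apply Stokes theory.

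Next I would view the fluid momentum equation, for a.e.\ fixed $t$, as a steady Stokes system on $\Omega_{h(t)}$:
\[
-\mu\Delta u+\nabla p = -\rho_f(\partial_t u + u\cdot\nabla u)=:f,\quad \div u=0,\quad u|_{\partial\Omega_h}=u_\partial,
\]
with $u_\partial = \partial_t h\, {\rm e_2}$ on the beam and $0$ on the bottom. To obtain the $L^\infty(H^2)\times L^\infty(H^1)$ bound for $(u,p)$ I would apply Theorem~\ref{thm:stokessteady} with $s=2$, $q=2$: the right-hand side $f$ lies in $L^\infty(L^2)$, using $\partial_t u\in L^\infty(L^2)$ from \eqref{formal-est} together with the 2D estimate $\|u\cdot\nabla u\|_{L^2}\lesssim \|u\|_{L^4}\|\nabla u\|_{L^4}\lesssim \|u\|_{H^1}^{1/2}\|u\|_{H^2}^{3/2}$, which after absorption is controlled by the already known bounds; the boundary datum lies in $L^\infty(H^{3/2})$ thanks to $\partial_t h\in L^\infty(H^2)$; and the geometry $h\in L^\infty(H^2\cap C^1)$ meets the Besov hypothesis on $\Omega_h$.

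With $u\in L^\infty(H^2)$ and $p\in L^\infty(H^1)$ in hand, I would then invert the beam equation \eqref{beameq} for $\partial_x^4 h$:
\[
\alpha\partial_x^4 h = -\rho_s\partial_t^2 h + \beta\partial_x^2 h + \phi(u,p,h).
\]
Using the trace $\phi(u,p,h)\in L^\infty(H^{1/2}(0,L))$ (from the $H^2\times H^1$ regularity of $(u,p)$ and $h\in L^\infty(C^1)$) together with $\partial_t^2 h\in L^\infty(L^2)$, this yields $h\in L^\infty(0,T;H^4)$. Now, with $h\in L^\infty(H^4)\hookrightarrow L^\infty(C^3)$, the domain regularity is more than enough to re-apply Theorem~\ref{thm:stokessteady} with $s=5/2$, $q=2$: the boundary datum $\partial_t h\, {\rm e_2}$ lies in $L^\infty(H^2)$, while the right-hand side $f\in L^4(H^{1/2})$, which I would verify by interpolating $\partial_t u\in L^\infty(L^2)\cap L^2(H^1)$ to $L^4(H^{1/2})$ and noting that $u\cdot\nabla u\in L^\infty(H^1)$ since $u\in L^\infty(H^2)\hookrightarrow L^\infty(C^0)$. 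This produces the $L^4(H^{5/2})\times L^4(H^{3/2})$ bound for $(u,p)$.

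Finally, uniqueness I would establish by a standard weak-strong energy argument: writing the system for the difference of two strong solutions, transferring to the fixed reference geometry via a common ALE map built from one of the two beams, and closing a Gr\"onwall estimate using that one solution has the regularity of \eqref{hatfinal}. The minimal time interval of existence and the persistence of a strong solution up to geometric degeneracy follow from Remark~\ref{rem:time} combined with the a-priori estimate \eqref{formal-est}. The main obstacle I expect is the bootstrap for $L^4(H^{5/2})$: verifying carefully that the Besov-domain hypothesis of Theorem~\ref{thm:stokessteady} is met on the moving geometry with its locally small Lipschitz constant, and producing the $L^4(H^{1/2})$ bound on the convective term honestly by interpolating between the endpoint spaces supplied by \eqref{formal-est} without losing regularity through the nonlinearity.
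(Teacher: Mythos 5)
Your overall scaffolding — take the Galerkin limit $(\hat u,h)$, transfer regularity back to $\Omega_h$ via Proposition~\ref{relareg}, bootstrap $(u,p_0)$ with the steady Stokes estimate, invert the beam equation for $\partial_x^4 h$, then do a second Stokes pass for $L^4(H^{5/2})$ after interpolating $\partial_t u$ — matches the paper's proof. Your Step~3 ($L^4_t H^{5/2}_x$) and the beam-inversion step are essentially identical to the paper's.

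The gap is in your first spatial bootstrap. You propose to apply Theorem~\ref{thm:stokessteady} directly with $s=q=2$ and close it by ``absorbing'' the convective term using a 2D Ladyzhenskaya estimate. Two problems. First, the exponent you quote is reversed: the 2D Ladyzhenskaya/Agmon interpolation gives
\[
\|u\cdot\nabla u\|_{L^2}\lesssim\|u\|_{L^4}\|\nabla u\|_{L^4}\lesssim \|u\|_{H^1}^{\frac32}\|u\|_{H^2}^{\frac12},
\]
not $\|u\|_{H^1}^{1/2}\|u\|_{H^2}^{3/2}$; with your power $3/2$ on $\|u\|_{H^2}$ absorption does not even work formally. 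Second, and more seriously, even with the correct exponent the absorption is circular at this stage: the Stokes estimate \eqref{stokesmain} is an implication of the form ``if $f\in L^2(\Omega_h)$ then $u\in H^2(\Omega_h)$.'' Before applying it you only know $u\in L^\infty_t H^1_\bz$, whence $u\cdot\nabla u\in L^\infty_t L^r_\bz$ for every $r<2$ but \emph{not} in $L^2_\bz$. You cannot invoke the $s=q=2$ estimate until you know that the right-hand side is in $L^2$, and you cannot absorb a term that you do not yet know to be finite. The paper resolves this by an intermediate pass: apply Theorem~\ref{thm:stokessteady} with $s=2$, $q=r\in(1,2)$ (where $u\cdot\nabla u\in L^r$ is already available from the energy bounds and the embedding $H^1\hookrightarrow L^{2r/(2-r)}$), obtaining $u\in L^\infty_t W^{2,r}_\by\hookrightarrow L^\infty_{t,\by}$; this in turn gives $u\cdot\nabla u\in L^\infty_tL^2_\by$ \emph{honestly}, and only then is the $L^2\to H^2$ Stokes estimate applicable. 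Adding this intermediate $W^{2,r}$ step to your plan repairs the circularity; the rest of your proposal aligns with the paper.
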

\begin{proof}
	The regularity of $h$ can be obtained directly from the estimate \rfb{twoener1new} and \rfb{twoener3'} that
	$$h\in W^{1,\infty}(0, T; H^2(0, L))\cap W^{2, \infty}(0, T; L^2(0, L)). $$ 
	For the initial data, recalling that $\hat u_0(x,z)=u_0(x, h_0z)=u_0(x,y)$, we notice from Proposition \ref{relareg} that $\lVert\hat u_0\rVert_{H^2(\Omega_{1})}$ is equivalent to $\lVert u_0\rVert_{H^2(\Om_{h_0})}$. This implies that the initial conditions \rfb{initialneed} is satisfied. The pressure is constructed in two parts. For that we decompose it into
	\begin{align}
	\label{eq:presdec}
	p(t,x)=p_0(t,x)+p_1(t)\text{ such that }\int_{\Omega_{h(t)}}p_0(t,x)\, \dx=0.
	\end{align}
	Now $p_0$ is directly constructed via Stokes equation, while $p_1$ is constant in space.
	
	To present the proof clearly, we divide it in the following steps.
	
	{\bf Step 1}: {\em We show that $u\in L^\infty(0, T; W^{2,r}(\Om_h))$ for every $1<r<2$.} Note that we have the regularity for $\hat u$: $$\hat u\in W^{1,\infty}(0, T; L^2(\Om_1))\cap H^1(0, T; H^1(\Om_1))$$ from the weak convergence \rfb{converg}. 
	This gives us that $ \nabla\hat u\in L^\infty(0, T; L^2(\Om_1))$. From Proposition \ref{relareg} we obtain that $\nabla u\in L^\infty(0, T; L^2(\Om_h))$. Using H\"older's inequality and Sobolev embedding $H^1(\Om_h)\hookrightarrow L^{\frac{2r}{2-r}}(\Om_h)$, for every $1<r<2$ we have 
	$$\lVert u\cdot\nabla u\rVert_{L^r(\Om_h)}\lesssim \lVert u\rVert_{L^{\frac{2r}{2-r}}(\Om_h)}\lVert\nabla u\rVert_{L^2(\Om_h)}\lesssim \lVert\nabla u\rVert_{L^2(\Om_h)}^2. $$
	Hence we obtain that $u\cdot\nabla u\in L^\infty(0, T; L^r(\Om_h))$ for every $1<r<2$.
	
	Now we apply the Stokes estimate \rfb{stokesmain} for $s=2$, $q=r\in (1,2)$, $f=\prt_t u+u\cdot \nabla u$ and $u_{\prt}=\prt_t h$. Thereby we have 
	\begin{equation}\label{restimate}
	\begin{aligned}
	\lVert u\rVert_{W^{2, r}(\Om_h)}+\lVert\m p_0\rVert_{W^{1,r}(\Om_h)}&\lesssim \lVert\prt_t u+u\cdot \nabla u\rVert_{L^r(\Om_h)}+\lVert\prt_t h\rVert_{W^{2-\frac{1}{r}, r}(0, L)}\\
	&\lesssim \lVert\prt_t u\rVert_{L^r(\Om_h)}+\lVert u\cdot \nabla u\rVert_{L^r(\Om_h)}+\lVert\prt_t h\rVert_{W^{2-\frac{1}{r}, r}(0, L)}.
	\end{aligned}
	\end{equation}
	Note that $\prt_t\hat u\in L^\infty(0, T; L^2(\Om_1))$, then through Proposition \ref{relareg} we get that $\prt_t u\in L^\infty(0, T; L^2(\Om_h))$. Moreover, recall that $\prt_t h\in L^\infty(0, T; H^2(0, L))$, using the continuous embedding $H^2(0, L)\hookrightarrow W^{2-\frac{1}{r}, r}(0, L)$,  we derive from \rfb{restimate} that $u\in L^\infty(0, T; W^{2,r}(\Om_h))$ for $1<r<2$. 
	
	{\bf Step 2}: {\em We prove qualitatively that $u\in L^\infty(0, T; H^2(\Om_h))$ and $p\in L^\infty(0, T; H^1(\Om_h))$.} Using the regularity $u\in L^\infty(0, T; W^{2,r}(\Om_h))$ and the continuous embedding $W^{2,r}(\Om_h)\hookrightarrow L^\infty(\Om_h)$ for $1<r<2$, we have $u\in L^\infty((0, T)\times \Om_h)$. Moreover, we note that 
	$$\lVert u\cdot \nabla u\rVert_{L^2(\Om_h)}\lesssim \lVert u\rVert_{L^\infty(\Om_h)}\lVert\nabla u\rVert_{L^2(\Om_h)}. $$
	This, together with $ \nabla u\in L^\infty(0, T; L^2(\Om_h))$, gives us that $u\cdot \nabla u\in L^\infty(0, T; L^2(\Om_h))$.
	At this stage, we apply the Stokes estimate \rfb{stokesmain} again for $s=2=q$ and obtain 
	\begin{equation}\label{H2estimate}
	\begin{aligned}
	\lVert u\rVert_{H^2(\Om_h)}+\lVert\m p_0\rVert_{H^1(\Om_h)}&\lesssim \lVert\prt_t u+u\cdot \nabla u\rVert_{L^2(\Om_h)}+\lVert\prt_t h\rVert_{H^{\frac{3}{2}}(0, L)}\\
	&\lesssim \lVert\prt_t u\rVert_{L^2(\Om_h)}+\lVert u\cdot \nabla u\rVert_{L^2(\Om_h)}+\lVert\prt_t h\rVert_{H^{\frac{3}{2}}(0, L)}.
	\end{aligned}
	\end{equation}
	Since we already notice that $\prt_t u\in L^\infty(0, T; L^2(\Om_h))$, then \rfb{H2estimate} shows that $u\in L^\infty(0, T; H^2(\Om_h))$ and $p_0\in L^\infty(0, T; H^1(\Om_h))$.
	
	Based on the decomposition \rfb{eq:presdec}, now we define $p_1$ in such a way that \eqref{beameq} is satisfied. 
	Recall that we have the zero mean condition \rfb{zeromeansource} for the source term $\phi(u,p,h)$ defined in \rfb{dynamiceq}.
	This, together with \rfb{eq:presdec}, gives us that $\sigma(u, p)=\sigma(u, p_0)-p_1 \mathbb{I}_{2\times 2}$ and thereby we have
	\[
	Lp_1(t)=\int_0^L{\rm e_2}\cdot \sigma(u, p_0)(t,x, h(t,x)) (-\prt_x
	h\m{\rm e_1}+{\rm e_2})\dx\in L^\infty(0,T).
	\]
	Hence, with the current regularity of $u$ and $p$, we derive that the Cauchy stress tensor $\sigma(u, p)\in L^\infty(0, T; H^1(\Om_h))$. Thereby the source term of the structure $\phi(u, p, h)$ belongs to $L^\infty(0, T; H^{\frac{1}{2}}(0, L))$. Based on the beam equation \rfb{beameq} and the regularity of $h$, we finally have $\prt_x^4 h\in L^\infty(0, T; L^2(0, L))$. Therefore, the strong solution $(u, p, h)$ of the system \eqref{fluideq}--\eqref{initiald} is established in both space and time.

	{\bf Step 3}: {\em We have further regularity: $u\in L^4(0, T; H^{\frac{5}{2}}(\Om_h))$ and $p\in L^4(0, T; H^{\frac{3}{2}}(\Om_h))$.} Recalling the regularity of $\hat u$, we have $\prt_t\hat u\in L^\infty(0, T; L^2(\Om_1))\cap L^2(0, T; H^1(\Om_1))$. With the result in Step 2, we know that $u\in L^\infty(0, T; H^2(\Om_h))$ and then by Proposition \ref{relareg} we get $\prt_t u\in L^\infty(0, T; L^2(\Om_h))\cap L^2(0, T; H^1(\Om_h)) $ as well. By the interpolation inequality:
	\begin{equation}\label{inequse}
	\lVert\prt_t u\rVert_{L^4(0, T; H^{\frac{1}{2}}(\Om_h))}\lesssim \lVert\prt_t u\rVert_{L^\infty(0, T; L^2(\Om_h))}^{\frac{1}{2}}\lVert\prt_t u\rVert_{L^2(0, T; H^1(\Om_h))}^{\frac{1}{2}},	
	\end{equation} 
	we obtain further that $\prt_t u\in L^4(0, T; H^{\frac{1}{2}}(\Om_h))$. Now we use the Stokes estimate \rfb{stokesmain} for $s=\frac{5}{2}$ and $q=2$ once more and have
	\begin{equation}\label{higher}
	\begin{aligned}
	\lVert u\rVert_{H^{\frac{5}{2}}(\Om_h)}+\lVert\m p_0\rVert_{H^{\frac{3}{2}}(\Om_h)}&\lesssim \lVert\prt_t u+u\cdot \nabla u\rVert_{H^{\frac{1}{2}}(\Om_h)}+\lVert\prt_t h\rVert_{H^{2}(0, L)}\\
	&\lesssim \lVert\prt_t u\rVert_{H^{\frac{1}{2}}(\Om_h)}+\lVert u\cdot \nabla u\rVert_{H^{\frac{1}{2}}(\Om_h)}+\lVert\prt_t h\rVert_{H^{2}(0, L)}\\
	&\lesssim \lVert\prt_t u\rVert_{H^{\frac{1}{2}}(\Om_h)}+\lVert u\rVert_{L^\infty(\Om_h)}\lVert \nabla u\rVert_{H^{\frac{1}{2}}(\Om_h)}+\lVert\prt_t h\rVert_{H^{2}(0, L)}.
	\end{aligned}
	\end{equation}
	Combining with the above analysis, we obtain from \rfb{higher} that
	\begin{align*}
	&\lVert u\rVert_{L^4(0, T;H^{\frac{5}{2}}(\Om_h))}+\lVert\m p_0\rVert_{L^4(0, T; H^{\frac{3}{2}}(\Om_h))}\\&\lesssim \lVert\prt_t u\rVert_{L^4(0, T;H^{\frac{1}{2}}(\Om_h))}+ C(T)\left(\lVert  u\rVert_{L^\infty(0, T; H^2(\Om_h))}+\lVert \prt_th\rVert_{L^\infty(0, T; H^2(0, L))}\right).
	\end{align*} 
	Using the decomposition argument again, we obtain the regularity of $p$.
	
	Putting the above results together, we conclude that the regularity in \rfb{hatfinal} holds, which ends the proof.
\end{proof}

\subsection{Proof of Theorem~\ref{thm:main}}
Now we are in a position to finish the proof of Theorem \ref{thm:main}. The existence of a strong solution was obtained in Theorem \ref{themfinal}. Hence we are left to show {\em quantitatively} that the estimate is linear with respect to the higher order norms of the initial conditions.

Beginning from Step 2 in the proof of Theorem \ref{themfinal}, we derive the following estimate of $u$ in the spaces $L^\infty(0, T; H^2(\Om_h))$ and $L^4(0, T; H^{\frac{5}{2}}(\Om_h))$, which is linear with respect to the higher order norms of the initial conditions.
Based on the inequality \rfb{H2estimate}, we have 
\begin{align*}
&\lVert u\rVert_{L^\infty(0, T; H^2(\Om_h))}^2+\lVert\m p_0\rVert_{L^\infty(0, T; H^1(\Om_h))}^2\\
&\lesssim \lVert\prt_t u\rVert_{L^\infty(0, T; L^2(\Om_h))}^2+\lVert u\cdot \nabla u\rVert_{L^\infty(0, T; L^2(\Om_h))}^2+\lVert\prt_t h\rVert_{L^\infty(0, T; H^{\frac{3}{2}}(0, L))}^2.
\end{align*} 
Note that for the convective term above, we estimate by using Ladyzhenskaya inequality that
\begin{equation*}
\begin{aligned}
\lVert u\cdot \nabla u\rVert_{L^\infty(0, T; L^2(\Om_h))}^2
&\leq\lVert u\rVert_{L^\infty(0, T; L^4(\Om_h))}^2\lVert\nabla u\rVert_{L^\infty(0, T; L^4(\Om_h))}^2\\
&\lesssim \lVert u\rVert_{L^\infty(0, T; L^2(\Om_h))}\lVert\nabla u\rVert_{L^\infty(0, T; L^2(\Om_h))}^2\lVert\nabla^2 u\rVert_{L^\infty(0, T; L^2(\Om_h))}\\
&\leq \eps\lVert\nabla^2u\rVert_{L^\infty(0, T; L^2(\Om_h))}^2+C(\eps)\lVert u\rVert_{L^\infty(0, T; L^2(\Om_h))}^2\lVert\nabla u\rVert_{L^\infty(0, T; L^2(\Om_h))}^4.
\end{aligned}
\end{equation*} 
According to Proposition \ref{relareg}, we know that $\lVert \nabla u\rVert_{L^\infty(0, T; L^2(\Om_h))}\sim \lVert\nabla\hat u\rVert_{L^\infty(0, T; L^2(\Om_1))}$. Then we consider the following interpolation inequality:
$$\lVert\nabla\hat u\rVert_{L^\infty(0, T; L^2(\Om_1))}\lesssim \lVert\nabla\hat u\rVert_{L^2(0, T; L^2(\Om_1))}^{\frac{1}{2}}\lVert\nabla\hat u\rVert_{H^1(0, T; L^2(\Om_1))}^{\frac{1}{2}}
\lesssim C_0^{\frac{1}{4}} \lVert\nabla\hat u\rVert_{H^1(0, T; L^2(\Om_1))}^{\frac{1}{2}}, $$
This gives us that 
\begin{align}\label{convect1}
\lVert u\cdot \nabla u\rVert_{L^\infty(0, T; L^2(\Om_h))}^2
&\leq  \eps\lVert\nabla^2u\rVert_{L^\infty(0, T; L^2(\Om_h))}^2+C(\eps)C_0^2 \lVert\nabla\hat u\rVert_{H^1(0, T; L^2(\Om_1))}^2,
\end{align}
where we used the energy balance \rfb{energyhat}. Combining with the above estimate, 
we derive from Proposition \ref{initprop} and Theorem \ref{thm:main-disrete} that
\begin{equation*}
\begin{aligned}
&\lVert u\rVert_{L^\infty(0, T; H^2(\Om_h))}^2+\lVert\m p_0\rVert_{L^\infty(0, T; H^1(\Om_h))}^2
\\
&\leq c(C_0)\left(\lVert\hat u_0\rVert_
{H^2(\Om_1)}^2+ \lVert h_0\rVert_{H^4(0, L)}^2+\lVert h_1\rVert_{H^2(0,L)}^2+1\right).
\end{aligned}
\end{equation*}

Similarly, we can derive a 'linear estimate' from \rfb{higher}. We consider
\begin{align}\label{prefin2}
&\lVert u\rVert_{L^4(0, T; H^{\frac{5}{2}}(\Om_h))}^4+\lVert\m p_0\rVert_{L^4(0, T; H^{\frac{3}{2}}(\Om_h))}^4\nonumber\\
&\lesssim \lVert\prt_t u\rVert_{L^4(0, T; H^{\frac{1}{2}}(\Om_h))}^4+\lVert u\cdot \nabla u\rVert_{L^4(0, T; H^{\frac{1}{2}}(\Om_h))}^4+\lVert\prt_t h\rVert_{L^2(0, T; H^{2}(0, L))}^4.
\end{align}
By using the interpolation \rfb{inequse}, we deal with the convective term again as follows:
\begin{equation}\label{prefin1}
\begin{aligned}
\lVert u\cdot \nabla u\rVert_{L^4(0, T; H^{\frac{1}{2}}(\Om_h))}^4
&\lesssim \lVert u\cdot \nabla u\rVert_{L^\infty(0, T; L^2(\Om_h))}^{2}\lVert u\cdot \nabla u\rVert_{L^2(0, T; H^1(\Om_h))}^{2}.
\end{aligned}
\end{equation} 
Observe that 
$$\lVert u\cdot \nabla u\rVert_{L^2(0, T; H^1(\Om_h))}^{2}=\int_0^T\left(\lVert \nabla u\rVert_{L^2(\Om_h)}^4+\lVert u\cdot \nabla^2 u\rVert_{L^2(\Om_h)}^2\right)\dd t: ={\rm I+II}.$$ 
Now we estimate ${\rm I}$ and ${\rm II}$, respectively. We first have 
\begin{align*}
{\rm I}&\lesssim \int_0^T\lVert u\rVert_{L^2(\Om_h)}^{\frac{12}{5}}\lVert \nabla u\rVert_{H^{\frac{3}{2}}(\Om_h)}^{\frac{8}{5}}\dd t\\
&\lesssim \left(\int_0^T\lVert u\rVert_{L^2(\Om_h)}^4\dd t\right)^{\frac{3}{5}}\left(\int_0^T\lVert \nabla u\rVert_{H^{\frac{3}{2}}(\Om_h)}^{4}\right)^{\frac{2}{5}},
\end{align*} 
where we used H\"older's inequality and the interpolation inequality:
$$\lVert \nabla u\rVert_{L^2(\Om_h)}\lesssim \norm{ \nabla u}_{H^{-1}(\Om_h)}^{\frac{3}{5}}\lVert \nabla u\rVert_{H^{\frac{3}{2}}(\Om_h)}^{\frac{2}{5}}. $$
For ${\rm II}$, we also have
\begin{align*}
{\rm II}&\leq \int_0^T\lVert u\rVert_{L^4(\Om_h)}^2\lVert\nabla^2 u\rVert_{L^4(\Om_h)}^2\dd t\\
&\lesssim \int_0^T\lVert u\rVert_{L^2(\Om_h)}\lVert\nabla u\rVert_{L^2(\Om_h)}\lVert\nabla^2 u\rVert_{H^{\frac{1}{2}}(\Om_h)}^2\dd t\\
&\leq \left(\int_0^T\lVert \nabla^2 u\rVert_{H^{\frac{1}{2}}(\Om_h)}^4\dd t\right)^{\frac{1}{2}}\left(\int_0^T\lVert\nabla u\rVert_{L^2(\Om_h)}^2\dd t\right)^{\frac{1}{2}}.
\end{align*}
This implies using \eqref{convect1} and \rfb{prefin1} that
\begin{align*}
\lVert u\cdot \nabla u\rVert_{L^4(0, T; H^{\frac{1}{2}}(\Om_h))}^4
&\lesssim
\Bigg(\left(\int_0^T\lVert \nabla^2 u\rVert_{H^{\frac{1}{2}}(\Om_h)}^4\dd t\right)^{\frac{1}{2}}+\left(\int_0^T\lVert \nabla u\rVert_{H^{\frac{3}{2}}(\Om_h)}^{4}\right)^{\frac{2}{5}}\Bigg)
\\
&\times\norm{\nabla^2 u}_{L^\infty(0,T;L^2(\Omega_h))}\norm{\nabla \hat u}_{H^1(0,T;L^2(\Omega_1))}
\\
&\leq\eps\int_0^T\lVert u\rVert_{H^{\frac{5}{2}}(\Om_h)}^4\dd t +\left(\lVert u_0\rVert_
{H^2(\Om_1)}+ \lVert h_0\rVert_{H^4(0, L)}+\lVert h_1\rVert_{H^2(0,L)}+1\right)^4.
\end{align*}
Hence, we conclude from \rfb{prefin2} and the above that
\begin{align*}
&\lVert u\rVert_{L^4(0, T; H^{\frac{5}{2}}(\Om_h))}^4+\lVert\m p_0\rVert_{L^4(0, T; H^{\frac{3}{2}}(\Om_h))}^4\\
&\leq c(C_0)\left(\lVert\hat u_0\rVert_
{H^2(\Om_1)}^4+ \lVert h_0\rVert_{H^4(0, L)}^4+\lVert h_1\rVert_{H^2(0,L)}^4+1\right).
\end{align*}
This finishes the proof of Theorem~\ref{thm:main}.

\section*{Funding}
S. Schwarzacher and P. Su are partially supported by the ERC-CZ Grant CONTACT LL2105 funded by the Ministry of Education, Youth and Sport of the Czech Republic and the University Centre UNCE/SCI/023 of Charles University. S. Schwarzacher also acknowledges the support of the VR
Grant 2022-03862 of the Swedish Science Foundation.

\section*{Compliance with Ethical Standards}
\smallskip
\par\noindent
{\bf Conflict of Interest}. The authors declare that they have no conflict of interest.

\smallskip
\par\noindent
{\bf Data Availability}. Data sharing is not applicable to this article as no datasets were generated or analyzed during the current study.


\begin{thebibliography}{[M]}
	\bibitem{adams2003sobolev}
	{\sc R.~A. Adams and J.~J. Fournier}, {\em Sobolev spaces}, Elsevier, 2003.
	
	\bibitem{AvalosLasieckaTriggiani}
	{\sc G.~Avalos, I.~Lasiecka, and R.~Triggiani}, {\em Higher regularity of a
		coupled parabolic-hyperbolic fluid-structure interactive system}, Georgian Math. J., 15 (2008), pp.~403--437.
	
	\bibitem{AvalosTriggiani}
	{\sc G.~Avalos and R.~Triggiani}, {\em Uniform stabilization of a coupled {PDE}
		system arising in fluid-structure interaction with boundary dissipation at
		the interface}, Discrete Contin. Dyn. Syst., 22 (2008), pp.~817--833.
	
	\bibitem{badra2019gevrey}
	{\sc M.~Badra and T.~Takahashi}, {\em Gevrey regularity for a system coupling
		the {N}avier--{S}tokes system with a beam equation}, SIAM J. Math. Anal., 51 (2019), pp.~4776--4814.
	
	\bibitem{badra2022gevrey}
	\leavevmode\vrule height 2pt depth -1.6pt width 23pt, {\em Gevrey regularity
		for a system coupling the {N}avier-{S}tokes system with a beam: the non-flat
		case}, Funkcial. Ekvac., 65 (2022), pp.~63--109.
	
	\bibitem{BarGruLasTuf07}
	{\sc V.~Barbu, Z.~Gruji\'{c}, I.~Lasiecka, and A.~Tuffaha}, {\em Existence of
		the energy-level weak solutions for a nonlinear fluid-structure interaction
		model}, in Fluids and waves, vol.~440 of Contemporary Mathematics, Amer.
	Math. Soc., Providence, RI, 2007, pp.~55--82.
	
	\bibitem{BarGruLasTuff}
	\leavevmode\vrule height 2pt depth -1.6pt width 23pt, {\em Smoothness of
		weak solutions to a nonlinear fluid-structure interaction model}, Indiana Univ. Math. J., 57 (2008), pp.~1173--1207.
	
	\bibitem{BdV1}
	{\sc H.~Beir{\~a}o~da Veiga}, {\em On the existence of strong solutions to a
		coupled fluid-structure evolution problem}, J. Math. Fluid Mech., 6 (2004), pp.~21--52.
	
	\bibitem{BenKamSch23}
	{\sc B.~Bene{\v s}ov{\'a}, M.~Kampschulte, and S.~Schwarzacher}, {\em
		Variational methods for fluid--structure interaction and porous media},
	Nonlinear Anal. Real World Appl., 71 (2023), p.~103819.
	
	\bibitem{BouGue10comp}
	{\sc M.~Boulakia and S.~Guerrero}, {\em Regular solutions of a problem coupling
		a compressible fluid and an elastic structure}, J. Math. Pures Appl., 94 (2010), pp.~341--365.
	
	\bibitem{breit2022regularity}
	{\sc D.~Breit}, {\em Regularity results in 2{D} fluid--structure interaction},
	Math. Ann.,  (2022), pp.~1--44.
	
	\bibitem{BreMenSchSu23}
	{\sc D.~Breit, P.~R. Mensah, S.~Schwarzacher, and P.~Su}, {\em
		Ladyzhenskaya-{P}rodi-{S}errin condition for fluid-structure interaction
		systems}, arXiv preprint arXiv:2307.12273,  (2023).
	
	\bibitem{BreSch21}
	{\sc D.~Breit and S.~Schwarzacher}, {\em Navier-{S}tokes-{F}ourier fluids
		interacting with elastic shells}, Ann. Scuola Norm. Sup. Pisa Cl. Sci., XXIV, pp.~619--690.
	
	\bibitem{BreSch18}
	\leavevmode\vrule height 2pt depth -1.6pt width 23pt, {\em Compressible fluids
		interacting with a linear-elastic shell}, Arch. Ration. Mech. Anal., 228 (2018), pp.~495--562.
	
	\bibitem{brezis2018gagliardo}
	{\sc H.~Brezis and P.~Mironescu}, {\em Gagliardo--{N}irenberg inequalities and
		non-inequalities: the full story}, Ann. Inst. H. Poincaré C Anal. Non Linéaire, vol.~35, Elsevier, 2018, pp.~1355--1376.
	
	\bibitem{sunny}
	{\sc S.~{\v{C}}ani{\'c}}, {\em Moving boundary problems}, Bull. Amer. Math. Soc., 58 (2021), pp.~79--106.
	
	\bibitem{CasGraHil19}
	{\sc J.-J. Casanova, C.~Grandmont, and M.~Hillairet}, {\em On an existence
		theory for a fluid-beam problem encompassing possible contacts}, J. Éc. polytech. Math., 8 (2021), pp.~933--971.
	
	\bibitem{ChuLasWeb13}
	{\sc I.~Chueshov, I.~Lasiecka, and J.~T. Webster}, {\em Evolution semigroups in
		supersonic flow-plate interactions}, J. Differential Equations, 254
	(2013), pp.~1741--1773.
	
	\bibitem{ciarlet2000theory}
	{\sc P.~G.~Ciarlet}, {\em Mathematical Elasticity, Volume III, Theory of
		shells}, Elsevier, 2000.
	
	\bibitem{CSS1}
	{\sc D.~Coutand and S.~Shkoller}, {\em Motion of an elastic solid inside an
		incompressible viscous fluid}, Arch. Ration. Mech. Anal.,
	176 (2005), pp.~25--102.
	
	\bibitem{CSS2}
	\leavevmode\vrule height 2pt depth -1.6pt width 23pt, {\em The interaction
		between quasilinear elastodynamics and the {N}avier-{S}tokes equations},
	Arch. Ration. Mech. Anal., 179 (2006), pp.~303--352.
	
	\bibitem{Des}
 	 {\sc B.~Desjardins and  M. J.~Esteban}, {\em On weak solutions for fluid‐rigid structure interaction: compressible and incompressible models}, Comm. Partial Differential Equations, 25(7-8) (2000), pp.~263--285.

     \bibitem{Hieber}
     {\sc M.~Geissert, K.~Gotze, and M.~Hieber}, {\em $L^p$-theory for strong solutions to fluid-rigid body interaction in Newtonian and generalized Newtonian fluids}, Trans. Amer. Math. Soc., 365(3) (2013), 1393--1439.
     
 	 
 	 \bibitem{GerHil10}
 	 {\sc D.~Gerard-Varet and M.~Hillairet}, Regularity issues in the problem of fluid structure interaction, Arch. Ration. Mech. Anal., 195 (2010), pp.~375--407.
	
	\bibitem{Galdi1}
	{\sc G.~P.~Giovanni}, {\em On the motion of a rigid body in a viscous liquid: a mathematical analysis with applications},  Handbook of mathematical fluid dynamics 1 (2002): 653-791.
	
	
	\bibitem{GlaSue15}
    {\sc O.~Glass and F.~Sueur},
    {\em Uniqueness results for weak solutions of two-dimensional fluid-solid
     systems}, Arch. Ration. Mech. Anal., 218 (2) (2015), pp.~907--944.
	
	\bibitem{grandmont2008existence}
	{\sc C.~Grandmont}, {\em Existence of weak solutions for the unsteady
		interaction of a viscous fluid with an elastic plate}, SIAM J. Math. Anal., 40 (2008), pp.~716--737.
	
	\bibitem{grandmont2016existence}
	{\sc C.~Grandmont and M.~Hillairet}, {\em Existence of global strong solutions
		to a beam--fluid interaction system}, Arch. Ration. Mech. Anal., 220 (2016), pp.~1283--1333.
	
	\bibitem{GraHilLaq19}
	{\sc C.~Grandmont, M.~Hillairet, and J.~Lequeurre}, {\em Existence of local
		strong solutions to fluid--beam and fluid--rod interaction systems}, 
	Ann. Inst. H. Poincaré C Anal. Non Linéaire, 36 (2019),
	pp.~1105--1149.
	
	\bibitem{Gunz}
	 {\sc M.~D.~Gunzburger, H.~C.~Lee, and G.~A.~Seregin}, {\em Global existence of weak solutions for viscous incompressible flows around a moving rigid body in three dimensions}, J. Math. Fluid Mech., 2 (2000), 219--266.
	 
	
	\bibitem{IgnatovaKukavica}
	{\sc M.~Ignatova, I.~Kukavica, I.~Lasiecka, and A.~Tuffaha}, {\em On
		well-posedness for a free boundary fluid-structure model}, J. Math. Phys., 53 (2012).
	
	\bibitem{IgnKukLasTuf17}
	\leavevmode\vrule height 2pt depth -1.6pt width 23pt, {\em Small data global
		existence for a fluid-structure model}, Nonlinearity, 30 (2017),
	pp.~848--898.
	
	\bibitem{kaltenbacher}
	{\sc B.~Kaltenbacher, I.~Kukavica, I.~Lasiecka, R.~Triggiani, A.~Tuffaha, and
		J.~T. Webster}, {\em Mathematical Theory of Evolutionary Fluid-Flow Structure
		Interactions}, Oberwolfach Seminars, Springer International Publishing, 2018.
	
	\bibitem{KamSchSpe22}
	{\sc M.~Kampschulte, S.~Schwarzacher, and G.~Sperone}, {\em Unrestricted
		deformations of thin elastic structures interacting with fluids}, J. Math. Pures Appl., 173 (2023), pp.~96--148.
	
	\bibitem{KukavicaTuffahaZiane}
	{\sc I.~Kukavica, A.~Tuffaha, and M.~Ziane}, {\em Strong solutions for a fluid
		structure interaction system}, Adv. Differential Equations, 15 (2010),
	pp.~231--254.
	
	\bibitem{lengeler2014weak}
	{\sc D.~Lengeler and M.~R{\r{u}}{\v{z}}i{\v{c}}ka}, {\em Weak solutions for an
		incompressible {N}ewtonian fluid interacting with a {K}oiter type shell},
	Arch. Ration. Mech. Anal., 211 (2014), pp.~205--255.
	
	\bibitem{lequeurre2011existence}
	{\sc J.~Lequeurre}, {\em Existence of strong solutions to a fluid-structure
		system}, SIAM J. Math. Anal., 43 (2011), pp.~389--410.
	
	\bibitem{maity2021existence}
	{\sc D.~Maity, A.~Roy, and T.~Takahashi}, {\em Existence of strong solutions for a system of interaction between a compressible viscous fluid and a wave equation}, Nonlinearity, 34 (2021), pp.~2659.
	
	\bibitem{Maity1} {\sc D.~Maity, and T.~Takahashi}, {\em $L^p$ theory for the interaction between the incompressible Navier–Stokes system and a damped plate.}, J. Math. Fluid Mech., 23 (2021), pp.~103.
	
	\bibitem{muha2013existence}
	{\sc B.~Muha and S.~Cani{\'c}}, {\em Existence of a weak solution to a
		nonlinear fluid--structure interaction problem modeling the flow of an
		incompressible, viscous fluid in a cylinder with deformable walls}, Arch. Ration. Mech. Anal., 207 (2013), pp.~919--968.
	
	\bibitem{MuhNecRad23}
	{\sc B.~Muha, \v{S}.~Ne{\v{c}}asov{\'a}, and A.~Radosevic}, {\em On the regularity of weak solutions to the fluid–rigid body interaction problem}, Math. Ann., (2023), pp.~1-46.
	
	\bibitem{MuSc}
	{\sc B.~Muha and S.~Schwarzacher}, {\em Existence and regularity of weak
		solutions for a fluid interacting with a non-linear shell in three
		dimensions}, Ann. Inst. H. Poincaré C Anal. Non Linéaire, 39 (2022),
	pp.~1369--1412.
	
	\bibitem{Tuc1}
	{\sc J.~A.~San Martin, V.~Starovoitov, and M.~Tucsnak}, {\em Global weak solutions for the two-dimensional motion of several rigid bodies in an incompressible viscous fluid}, Arch. Ration. Mech. Anal., 161 (2) (2002), pp.~113-147.
	
	\bibitem{Tak}
	{\sc T.~Takahashi}, {\em Analysis of strong solutions for the equations modeling the motion of a rigid-fluid system in a bounded domain}, Adv. Differential Equations, 8 (12) (2003), pp.~1499--1532.
	
\bibitem{TakTucWei15}
{\sc T.~Takahashi, M.~Tucsnak, and G.~Weiss},
{\em Stabilization of a fluid-rigid body system},
J. Differential Equations, 259(11) (2016), pp.~6459--6493.

\bibitem{ZhaZua} {\sc X.~Zhang, E.~Zuazua}, {\em Long-time behavior of a coupled heat-wave system arising in fluid-structure interaction.} Arch.\ Ration.\ Mech.\ Anal., 184 (2007), pp. 49-120.

	
\end{thebibliography}
\end{document}